\documentclass{amsart}

\usepackage[utf8]{inputenc} 
\usepackage[T1]{fontenc}    
\usepackage{hyperref}       
\usepackage{url}            
\usepackage{booktabs}       
\usepackage{amsfonts}       
\usepackage{nicefrac}       
\usepackage{microtype}      
\usepackage{lipsum}
\usepackage{lmodern}

\usepackage[all]{hypcap}                

\usepackage{amsmath}
\usepackage{amssymb}
\usepackage{amsthm}
\usepackage{geometry}
\usepackage{enumerate}
\usepackage{tikz-cd}
\usepackage{tikz}
\usetikzlibrary{positioning}
\usepackage{mathtools}
\usepackage{multicol}
\usepackage{tabularx} 
\usepackage{mathtools}

\newtheorem{theo}{Theorem}[section]
\newtheorem{prop}[theo]{Proposition}
\newtheorem{defi}[theo]{Definition}
\newtheorem{notat}[theo]{Notation}
\newtheorem{lem}[theo]{Lemma}
\newtheorem{cor}{Corollary}[theo]

\newtheorem{rem}[theo]{Remark}
\newtheorem{ex}[theo]{Example}

\title{A study of  $n$-valued non-split maps and applications}

	\author{Daciberg Lima Gon\c calves} 
	\address[Daciberg Lima Gon\c calves]{Departamento de Matem\'atica -- IME -- USP, Rua do Mat\~ao, 1010,  Cidade Universit\'aria, CEP:  05508-090, 
		S\~ao Paulo - SP, Brazil}
	\email{dlgoncal@ime.usp.br}
	
	\author{Bartira Mau\'es}
	\address[Bartira Mau\'es]{ }
\email{bartira.ira@gmail.com }

\author{Daniel Vendr\'uscolo}
\address[D. Vendr\'uscolo]{Universidade Federal de S\~ao Carlos\\
Departamento de Matem\'atica\\
S\~ao Carlos - SP, Brazil} 
\email{daniel.vendruscolo@ufscar.br}

\begin{document}
	\maketitle

	
	\begin{abstract}
		For a given $n$-valued map, there exists a suitable minimal covering
		of the domain such that the composition of this covering with the map
		yields a split map, composed of $n$ single-valued maps, called the lift factors.
		Using the group action of the deck-transformation group we construct intermediate covering spaces as well as intermediate configuration spaces, allowing the original map to lift to these spaces, building a tower to illustrate at which level the lift factors split off.	 
		We then display two applications of these new tools.
		The first is to	derive results concerning the Borsuk-Ulam property for certain
		coverings of the tower.
		The second is to obtain a sharp
		formula for the Nielsen number of an $n$-valued non-split map, using the
		coincidence number of specific single-valued maps that arise in the
		lifts of the tower.	
	\end{abstract}

\noindent
{\it Keywords: $n$-valued maps; Nielsen number; Borsuk-Ulam property; covering spaces; configuration spaces; group actions; braids.}\\
{\it 2020 Mathematics Subject Classification: Primary: 55M20; Secondary: 57M10}

\section{Introduction}
	The exploration of Nielsen theory for $n$-valued maps on a space $X$ begins with the foundational work of Helga Schirmer \cite{schirmer1984fix, schirmer1984index, schirmer1985minimum}.
	This line of research is part of the study of $n$-valued maps between two spaces, where Nielsen theory plays a central role.
	Many classical questions regarding Nielsen theory for single-valued maps can be reformulated in the context of $n$-valued maps.
	We highlight several works relevant to the present paper:
	\cite{brown2018topology},   \cite{brown2023lift}, \cite{gonccalves2017fixed},	\cite{gonccalves2018fixed},  \cite{gonccalves2024borsuk}, \cite{gonccalves2022free} and	\cite{staecker2021partitions}.
	In \cite{brown2018topology}, the study of $n$-valued maps is reduced to that of	single-valued maps into a suitable configuration space.
	In \cite{gonccalves2017fixed} and \cite{gonccalves2018fixed}, the Nielsen theory of $n$-valued maps on surfaces is investigated, including techniques for computing Nielsen numbers.
	In \cite{staecker2021partitions}, $n$-valued maps are decomposed into unions of (possibly non-single-valued) maps as a tool to study Nielsen theory for $n$-valued maps.
	In \cite{brown2023lift}, a method is developed to study $n$-valued maps using certain associated single-valued maps, which are called the lift factors.
	References \cite{gonccalves2024borsuk} and \cite{gonccalves2022free} explore connections between $n$-valued maps and the Borsuk-Ulam property.

	The Borsuk-Ulam property for a triple $(\tilde X, \tau; Y)$, where $\tilde X$ and $Y$ are topological spaces and $\tau$ is a free $\mathbb{Z}_2$-action on $\tilde X$, is a notion inspired by the classical Borsuk-Ulam theorem.
	Namely {\it the triple $(\tilde X, \tau; Y)$ satisfies the  Borsuk-Ulam property, if for every map $f:\tilde X\to Y$, there is a point  $x\in \tilde  X$, such  that $f(x)=f(\tau(x))$.
	It is possible to generalize this concept to a group action rather than an involution (see \cite{gonccalves2022free}), as well as to a homotopy class in $[\tilde X,Y]$.}
	For that purpose, let $\tilde X$ and $Y$ be topological spaces and let $G$ be a non-trivial group with free action $G\times \tilde X\rightarrow\tilde X$. 
	Then every $g\in G$ induces a homeomorphism on $\tilde X$, which we denote, abusing notation, as $g:\tilde X\rightarrow \tilde X$.
	We say that a \textit{homotopy class $[f_0]\in [\tilde  X, Y]$ has the Borsuk-Ulam property with respect to $G$} if, for every $f\in [f_0]$, there exist distinct $g_1, g_2\in G$ and an $\tilde x\in \tilde X$ such that $f(g_1(\tilde x))=f(g_2(\tilde x))$.

	The first main goal of the present work is to refine the technique developed in \cite{brown2023lift} in order to explore further questions concerning $n$-valued maps.
	More specifically, to a given $n$-valued map $\phi: X\multimap Y$, we associate a sequence of maps beginning with its correspondent map $\Phi:X\rightarrow D_n(Y)$ and ending with the so-called split map $\hat{\Phi}: \tilde{X} \to F_n(Y)$, which we illustrate in a tower diagram.
	These intermediate maps form a new class of objects that can be used to study properties and questions involving $n$-valued maps.
	The second part of the work consists of applications of the developed framework.
	One application explores the relation between the Borsuk-Ulam property and several of the spaces involved in the tower construction, including the original space $X$.
	Another application provides a formula for computing the Nielsen number of an $n$-valued map on any closed manifold (whether orientable or not).
	We hope that the theory developed here will contribute to the growing interest in the study of $n$-valued maps on surfaces and their connections to the	Borsuk-Ulam property.

	This paper contains four sections, in addition to the introduction:
	Section \ref{sec:prelim} recalls several facts about configuration spaces and $n$-valued maps, fixes notation, and proves Proposition \ref{fsigmak=fdeltak}, a key result about lift factors in respect to the group action by the deck-transformation group, which is instrumental in our analysis.\\
	
	\noindent
	{\bf Proposition \ref{fsigmak=fdeltak}}
	\textit{Let $\phi:X\multimap Y$ be an $n$-valued non-split map with correspondent map $\Phi:X\rightarrow D_n(Y)$.
		Let $\hat\Phi=(f_1,\ldots,f_n):\tilde X\rightarrow F_n(Y)$ be the lift of $q\circ \Phi$, as in Notation \ref{notation non-split}.
		Then, for every deck transformation $\delta\in \mathcal Deck(\tilde X,q)$ with associated permutation $\tau=\Gamma(\delta)$ and for every $i\in\{1,\ldots,n\}$ and $k\in \mathbb N$, it holds that 
		$f_{\tau^{k}(i)}=f_i\circ\delta^{k}$.}\\
	
	Section \ref{sec:lift factors} extends the approach introduced in \cite{brown2023lift},
	studying the interaction of the group action $L'\simeq \mathcal Deck(\tilde X,q)$ on the lift factors and how certain group properties influence the lift factors in respect to each other.
	Immediately after Proposition \ref{int cov L Psi=> G<Li} we construct a tower of intermediate configuration spaces, as well as a tower of finite coverings of the domain of the given $n$-valued function.
	Furthermore we define maps (see Notation \ref{notation maps}) between the towers with the desired commutative properties.
	Several examples are provided to illustrate these results.
 
	Section \ref{sec:BU} describes the connection between the new objects introduced in Section \ref{sec:lift factors} and the Borsuk-Ulam property, arising in some applications to this theory.
	To point out one of the results consider a free action of $\mathbb{Z}_2\oplus\mathbb{Z}_2$ on $\mathbb T$ such that the orbit space is homeomorphic to $\mathbb T$, and let $\mathbb{D}$ be the $2$-dimensional disk.\\
	
	\noindent
	{\bf Corollary \ref{Z2+Z2 nao BU}}
	\textit{The triple $({\mathbb T}, \mathbb{Z}_2 \oplus \mathbb{Z}_2, {\mathbb D})$ does not have the Borsuk–Ulam property.}\\                                                                                                                                                                                                                                                                                                                                                                                                                                               
	
	Other main results of Section~\ref{sec:BU} are Propositions~\ref{NonBUmult} and Theorem~\ref{transitive, semireg=> phi}.
	Proposition~\ref{NonBUmult} is a reformulation of a result given in \cite{gonccalves2024borsuk} and \cite{gonccalves2022free}, showing how to construct an $n$-valued map starting from a homotopy class $[f_0]\in [\tilde X,Y]$ which does not satisfy the Borsuk-Ulam property in respect to the free group action $G$ acting on $\tilde X$.
	Before stating Theorem \ref{transitive, semireg=> phi} and its Corollary, which provide a partial converse of Proposition~\ref{NonBUmult}, recall that $L' \subset \mathcal{S}_n$ acts transitively on $\{1,\ldots,n\}$, if for each $i,j\in \{1,\ldots,n\}$ there exists an $\tau \in L'$ such that $\tau(i)=j$.\\
	
	\noindent
	{\bf Theorem \ref{transitive, semireg=> phi}}
	\textit{Let $\phi:X\multimap Y$ be an $n$-valued non-split map with correspondent map $\Phi:X\rightarrow D_n(Y)$ and let $\hat \Phi=(f_1,\ldots,f_n):\tilde X\rightarrow F_n(Y)$, $L'$ and $\Gamma:\mathcal Deck(\tilde X,q)\stackrel{\sim}{\rightarrow}L'$ be as in Notation \ref{notation non-split}.
		Let furthermore $H'\subseteq L'$ be a subgroup,	then the following holds;
		\begin{enumerate}[a)]
			\item
			if $L'$ acts transitively on $\{1,\ldots,n\}$, then there exists, for every $j\in \{1,\ldots,n\}$ a deck transformation $\delta_j\in \mathcal Deck(\tilde X,q)$ such that $f_j=f_1\circ \delta_j$;
			\item
			if, for a subgroup $H'\subseteq L'$ the stabilizer $H'_i$ of an $i\in \{1,\ldots,n\}$ in $H'$ is trivial, then $[f_i]$ does not satisfy the Borsuk-Ulam property in respect to $H=\Gamma^{-1}(H')$.\\
 	\end{enumerate}}
	
	\noindent
	{\bf Corollary \ref{reg=>phi}}
	\textit{Let	$\phi:X\multimap Y$ be an $n$-valued non-split map as in Theorem \ref{transitive, semireg=> phi}
		and let $L'$ act regularly on $F_n(Y)$.
		Then it holds for every $i\in \{1,\ldots,n\}$ that there is a $\delta_i\in \mathcal Deck(\tilde X,q)$ such that $f_i=f_1\circ \delta_i$ and the homotopy class $[f_1]$ does not satisfy the Borsuk-Ulam property in respect to $\mathcal Deck(\tilde X, q)$.}\\

	Section \ref{sec:Nielsen nr of nval non-split} addresses the problem of computing the Nielsen number of an $n$-valued map $\phi$ when $X$ is a closed manifold (either orientable or non-orientable).
	Proposition \ref{index fix=coin} establishes a relation between the Nielsen fixed point theory of $\phi$ and the coincidence theory of certain pairs of maps.
	This leads to the Theorem \ref{Nielsennumber non-orientable}, where we compute the Nielsen number for an $n$-valued non-split map $\phi:X\multimap X$, where $X$ is an arbitrary compact manifold without boundary (orientable or non-orientable), in terms of the Nielsen coincidence number of certain pairs of single valued maps which appear in our tower.\\
	
		\noindent
	{\bf Theorem \ref{Nielsennumber non-orientable}}
	\textit{Let $X$ be a compact manifold without boundary (orientable or non-orientable), and let $\phi:X\multimap X$ be an $n$-valued non-split map with  $\Phi:X\rightarrow D_n(X)$, $q:\tilde X\rightarrow X$, $\hat{\Phi}=\{f_1,\ldots,f_n\}:\tilde{X}\rightarrow F_n(X)$ and $L'$ as in Notation \ref{notation non-split}. Let furthermore $\mathbb I_0$, $L_{i}'$, $ L_i$ and $q_i, \bar f_i:\tilde X/L_i\rightarrow X$ be as in Notation \ref{notation action Li}.
		Then the Nielsen number of $\phi$ is equal to
		\begin{equation*}
			N(\phi) = \sum_{i\in  \mathbb I_0}N(q_i,\bar f_{i}).
	\end{equation*}}

	{\bf Acknowledgement:}  This work was initiated while the second author was conducting a postdoctoral fellowship at the Universidade Federal de S\~ao Carlos. This study was financed, in part, by the São Paulo Research Foundation (FAPESP), Brasil. Process Number 2022/16455-6.
	

\section{Preliminaries}\label{sec:prelim}
	In this section we recall some known facts about $n$-valued maps, establishing the terminology and laying the foundation for our approach to study $n$-valued maps.
	We assume that every topological space is a path-connected, locally path-connected and semilocally simply-connected Hausdorff space.
	Let $X$ and $Y$ be such spaces, then an \textit{$n$-valued map} from $X$ to $Y$, denoted by $\phi:X \multimap Y$ or $\phi:X \stackrel{n}\multimap Y$, is a multimap, specifically a (continuous) correspondence $\phi$ that assigns for each $x \in X$ a subset of $n$ points $\{y_1, \dots, y_n\} \subseteq Y$. 
	
	We  follow the path conducted by D. L. Gon\c calves and J. Guaschi in the works \cite{gonccalves2017fixed} and \cite{gonccalves2018fixed}, where a methodology is formulated to explore $n$-valued maps on closed surfaces utilizing braid theory.
	This approach uses configuration spaces (ordered and unordered) to study $n$-valued maps.
	The \textit{ordered configuration space} of $Y$, denoted by $F_n(Y)$, is formed by taking the $n$-th Cartesian product of $Y$, excluding the fat diagonal given by $\Delta = \{(y_1, \ldots, y_n) \in Y^n : \exists i, j \in \{1, \ldots, n\}, i \neq j \text{ such that } y_i = y_j\}$.
	Therefore $F_n(Y) = Y^n \setminus \Delta = \{(y_1, \ldots, y_n) : y_i \neq y_j \text{ for } i \neq j\}$.
	Consider the action of the $n$-th symmetric group $\mathcal{S}_n$ on $F_n(Y)$, defined by $(\tau, (y_1, \ldots, y_n)) \mapsto \tau\cdot (y_1, \ldots, y_n)=(y_{\tau(1)}, \ldots, y_{\tau(n)})$.
	By taking the quotient of $F_n(Y)$ under this action by $\mathcal{S}_n$, we obtain the \textit{unordered configuration space} $D_n(Y) = F_n(Y)/\mathcal{S}_n$. 
	Note that there exists a natural bijection between the set of $n$-point subsets of a topological space $Y$ and the unordered configuration space $D_n(Y)$.
	This yields a bijective correspondence between $n$-valued maps from $X$ to $Y$ and maps from $X$ to $D_n(Y)$.
	From this point forward, we denote by $\Phi$ the map from $X$ to $D_n(Y)$ that corresponds to the $n$-valued map $\phi: X \multimap Y$. 
	This correspondence preserves continuity under relatively mild conditions imposed on $X$ (see \cite[Corollary 4.1]{brown2018topology}), and by extension, homotopy as well.
	The ordered and unordered configuration space, $F_n(Y)$ and $D_n(Y)$ respectively, are the more common configuration spaces, in this paper we also use  intermediate covering spaces of $F_n(Y)$ and $D_n(Y)$.
	To recall the concept of an intermediate covering space let $q:\tilde X\rightarrow X$ be a covering space, with $X=\tilde X/\mathcal Deck(\tilde X,q)$, where $\mathcal Deck(\tilde X,q)$ is the group of deck-transformations on $\tilde X$.
	An \textit{intermediate covering space} is a space $Z$ together with covering space maps $q_1:\tilde X\rightarrow Z$ and $q_2: Z\rightarrow X$ such that $q_2\circ q_1=q$.
	In this case there exists a subgroup $H$ of $\mathcal Deck(\tilde X,q)$, such that $Z$ is homeomorphic to $\tilde X/H$ and we obtain the composition of covering spaces $\tilde X\rightarrow \tilde X/H\rightarrow X$ (\cite[page 81]{hatcher2002algebraic}).
	An intermediate covering space $Z=\tilde X/H$ is called \textit{proper} if $Z$ is neither $ \tilde X$ nor $X$, i.e. if $H$ is a proper subgroup of $\mathcal Deck(\tilde X,q)$.
		
	We say that an $n$-valued map $\phi:X \multimap Y$ \textit{splits} if there exist (continuous) functions $f_1, \dots, f_n: X \rightarrow Y$ such that $\phi(x) = \{f_1(x), \ldots, f_n(x)\}$ holds for all $x \in X$, and we denote this as \textit{$\phi = \{f_1, \ldots, f_n\}$}.
	Thus, for a split map, there is a continuous manner to associate an $n$-tuple with every point in the image.
	If a map is not split, we call it a \textit{non-split map}.
	According to \cite[page 8]{gonccalves2018fixed}, an $n$-valued map $\phi:X \multimap Y$ splits, if and only if, the function $\Phi$ possesses a \textit{lift}, meaning that there exists a map $\hat{\Phi}=(f_1,\ldots,f_n): X \rightarrow F_n(Y)$ such that $\Phi=\pi \circ \hat \Phi$, where $\pi:F_n(Y)\rightarrow D_n(Y)$ is the covering map.
	The single-valued maps $f_1,\ldots, f_n$ are called the \textit{lift factors of $\Phi$}.
	In this case we have the following commutative diagram and its  correspondent, induced in $\pi_1$, where we use that  $\pi_1(D_n(Y))$ is isomorphic to the braid group $B_n(Y)$ and $\pi_1(F_n(Y))$ to the pure braid group $P_n(Y)$; 
	
	\begin{multicols}{2}
		\noindent
		\begin{equation*}
			\begin{tikzcd}[column sep=huge]
				&F_n(Y)\arrow{d}{\pi}\\
				X\arrow[dashed]{ur}{\hat \Phi}\arrow{r}[swap]{\Phi} &D_n(Y).
			\end{tikzcd}
		\end{equation*}
		
		\noindent
		\begin{equation*}
			\begin{tikzcd}[column sep=huge]
				&P_n(Y)\arrow{d}{\pi_\#}\\
				\pi_1(X)\arrow[dashed]{ur}{\hat\Phi_\#}\arrow{r}[swap]{\Phi_\#} &B_n(Y).
			\end{tikzcd}
		\end{equation*}
	\end{multicols}
	
	For an $n$-valued non-split map $\phi:X\multimap Y$ we follow the approach of R. Brown and D. Gon\c calves in \cite{brown2023lift} to find correspondent lift factors for a non-split map.
	Let $\phi:X\multimap Y$ be an $n$-valued non-split map and $\Phi:X\rightarrow D_n(Y)$ its correspondent map.
	Note that the map induced by $\Phi$ in $\pi_1$, i.e. $\Phi_\#:\pi_1(X)\rightarrow \pi_1(D_n(Y))$ is a map from $\pi_1(X)$ to the $n$-th braid group $B_n(Y)$.	
	Given an arbitrary braid $\beta$, tracing along the strands, one finds that the point $\beta(1)$ is permuted relative to the point $\beta(0)$.
	This permutation is the called the underlying permutation of $\beta$.
	Let $\rho:B_n(X)\rightarrow \mathcal S_n$ take each braid to its underlying permutation, i.e. $\rho(\beta)=\tau$, where $\beta(0)=(p_1,\ldots,p_n)$ and $\beta(1)=(p_{\tau(1)},\ldots,p_{\tau(n)})$.
	Let $\theta=\rho\circ \Phi_\#:\pi_1(X)\rightarrow \mathcal S_n$ be their composition
	, then we obtain the following commutative diagram;
	\begin{equation*}
		\begin{tikzcd}[column sep=huge]
			\pi_1(X) \arrow{r}{\Phi_\#}\arrow{dr}[swap]{\theta}&B_n(Y)\arrow{d}{\rho}\\
			&\mathcal S_n.
		\end{tikzcd}
	\end{equation*}
	Note that $ker\rho$ is equal to $\pi_\#(P_n(Y))$, the subgroup of the pure braids in $B_n(Y)$, consequently $ker\theta=\Phi_\#^{-1}(\pi_\#(P_n(Y)))$, which we denote, abusing notation, by $\Phi_\#^{-1}(P_n(Y))$.
	Since $\phi$ is a non-split map, there is a loop in $\pi_1(X)$, which is not contained in $\Phi_\#^{-1}(P_n(Y))$.
	Otherwise, by the lifting criterion (\cite[p.61, Proposition 1.33.]{hatcher2002algebraic}), there would exist a lift $\hat \Phi:X\rightarrow F_n(X)$ and $\phi$ would be split.
	Therefore $ker\theta$ is a proper subset of $\pi_1(X)$ of finite index, hence by \cite[p. 66, Proposition 1.36]{hatcher2002algebraic} there is a finite covering space $q:\tilde X\rightarrow X$ such that $q_\#(\pi_1(\tilde X))=ker\theta$.
	Consider the $n!$-sheeted normal covering space $\pi:F_n(Y)\rightarrow D_n(Y)$ and note that the image of $\pi_1(\tilde X)$ under $\Phi_\#\circ q_\#$ is contained in $P_n(Y)$. 
	Then, according to the lifting criterion, there is a lift $\hat \Phi:\tilde X\rightarrow F_n(Y)$ of $\Phi\circ q$, which we can write as $\hat \Phi=(f_1,\ldots, f_n)$, where $f_1,\ldots f_n:\tilde X\rightarrow Y$ are suitable (single-valued) maps.
	We obtain the following commutative diagram;
	\begin{multicols}{2}
		\noindent
		\begin{equation*}
			\begin{tikzcd}[column sep=huge]
				&&F_n(Y)\arrow{d}{\pi}\\
				\tilde X \arrow{r}[swap]{q}\arrow[dashed]{urr}{\hat \Phi} &X\arrow{r} [swap]{\Phi}&D_n(Y).
			\end{tikzcd}
		\end{equation*}
		
		\noindent
		\begin{equation*}
			\begin{tikzcd}[column sep=huge]
				\pi_1(\tilde X) \arrow[dashed]{r}{\hat\Phi_\#}\arrow{d}[swap]{q_\#}&P_n(X)\arrow{d}{\pi_\#}\\
				\pi_1(X) \arrow{r}[swap]{\Phi_\#}&B_n(X)
			\end{tikzcd}
		\end{equation*}
	\end{multicols}
	
	Consider the group of deck-transformations of these two coverings $q:\tilde X\rightarrow X$ and $\pi:F_n(Y)\rightarrow D_n(Y)$, which we denote by $\mathcal Deck(\tilde X, q)$ and $\mathcal Deck(F_n(Y),\pi)$, respectively.
	Since $ker\theta$ and $P_n(Y)$ are normal subgroups of $\pi_1(X)$ and $B_n(Y)$, respectively, it follows by \cite[p. 71, Proposition 1.39]{hatcher2002algebraic} that 
	$\mathcal Deck(\tilde X, q)\simeq\pi_1(X)/ker\theta$ 
	and $\mathcal Deck(F_n(Y),\pi)\simeq B_n(Y)/P_n(Y)$, which is isomorphic to $\mathcal S_n$.
	We denote the image of $\theta$ by $L'$, which is a subgroup of $\mathcal S_n\simeq\mathcal Deck(F_n(Y),\pi)$.
	By the first isomorphism theorem it follows that $\mathcal Deck(\tilde X,q)$ is isomorphic to $L'=im \theta$.
	We denote this isomorphism by $\Gamma:\mathcal Deck(\tilde X,q)\rightarrow L'$ and $\Gamma'=\iota\circ \Gamma$, where $\iota:L'\hookrightarrow \mathcal S_n$ is the inclusion.
	By construction $\hat\Phi$ is equivariant in relation to $\Gamma$, i.e. for every $\delta\in \mathcal Deck(\tilde X,q)$ we have
	\begin{equation}\label{Phidelta=sigmaPhi}
		\hat\Phi\circ \delta= \Gamma(\delta)\cdot\hat\Phi,
	\end{equation}
	where $\cdot$ denotes the action of $L'\subset \mathcal S_n$ on $F_n(Y)$.
	
	\begin{notat}\label{notation non-split}
		Summarizing the notation of the last paragraphs we have, for an $n$-valued non-split map $\phi:X\multimap Y$ and correspondent map $\Phi:X\rightarrow D_n(Y)$, the following concepts;
		\begin{enumerate}
			\item $\theta=\rho\circ\Phi_\#:\pi_1(X)\rightarrow \mathcal S_n$, where $\rho:B_n(Y)\rightarrow \mathcal S_n$ takes a braid to its underlying permutation,
			\item $ker \theta = \Phi_\#^{-1}(P_n(Y))$ is a (normal) subgroup of $\pi_1(X)$;
			\item $q:\tilde X\rightarrow X$ is the (normal/regular and finite) covering space induced by $ker \theta$;
			\item $\hat \Phi=(f_1,\ldots, f_n):\tilde X\rightarrow F_n(Y)$ is a lift of $q\circ\Phi: \tilde{X}\rightarrow D_{n}(Y)$;
			\item $\mathcal Deck(\tilde X,q)\simeq \pi_1(X)/ker \theta$ and $\mathcal Deck(F_n(Y),\pi)\simeq\mathcal S_n$ are the (finite) groups of deck-transformations of the covering spaces $q:\tilde X\rightarrow X$ and $F_n(Y)\rightarrow D_n(Y)$, respectively;
			\item $L'=im\theta$ is a subgroup of $\mathcal Deck(F_n(Y),\pi)\simeq \mathcal S_n$ and consequently acts on $F_n(Y)$;
			\item $\Gamma:\mathcal Deck(\tilde X,q)\rightarrow L'$ is the isomorphism induced by $\theta$ and $\Gamma'=\iota \circ\Gamma$, where $\iota:L'\hookrightarrow \mathcal S_n$ is the inclusion.
		
		\end{enumerate}
	
		Furthermore the following diagrams are commutative; 
		\begin{multicols}{2}
			\noindent
			\begin{equation}\label{eq:classiclift_nonsplit}
				\begin{tikzcd}[column sep=huge]
					\tilde X \arrow{d}[swap]{q}\arrow{r}{\hat \Phi=(f_1,\ldots,f_n)} &F_n(Y)\arrow{d}{\pi}\\
					X\arrow{r}[swap]{\Phi}&D_n(Y).
				\end{tikzcd}
			\end{equation}
			
			\noindent
			\begin{equation}\label{diagrama n-valued <-> BU groupaction}
				\begin{tikzcd}[column sep=huge]
					\pi_1(\tilde X) \arrow{d}[swap]{q_\#}\arrow{r}{\hat \Phi_\#=(f_1,\ldots,f_n)_\#} &P_n(Y)\arrow{d}{\pi_\#}\\
					\pi_1(X)\arrow{r}[swap]{\Phi_\#}\arrow{d}&B_n(Y)\arrow{d}{\rho}\\
					\mathcal Deck(\tilde X,q)\arrow{r}[swap]{\Gamma'=\iota \circ \Gamma}&\mathcal S_n.
				\end{tikzcd}
			\end{equation}
			
		\end{multicols}	
	\end{notat}
	
	The result of the following proposition, connecting the lift factors $f_1,\ldots ,f_n$ and the deck-transformations of both covering spaces $q:\tilde X\rightarrow X$ and $\pi:F_n(Y)\rightarrow D_n(Y)$ is a crucial tool for all of the coming three sections.
	This fact is already known, but is written in a more explicit format that is convenient for the present paper.
	\begin{prop}\label{fsigmak=fdeltak}
		Let $\phi:X\multimap Y$ be an $n$-valued non-split map with correspondent map $\Phi:X\rightarrow D_n(Y)$.
		Let $\hat\Phi=(f_1,\ldots,f_n):\tilde X\rightarrow F_n(Y)$ be a lift of $q\circ \Phi$, as in Notation \ref{notation non-split}.
		Then, for every deck transformation $\delta\in \mathcal Deck(\tilde X,q)$ with associated permutation $\tau=\Gamma(\delta)$ and for every $i\in\{1,\ldots,n\}$ and $k\in \mathbb N$, it holds that 
		\begin{equation*}
			f_{\tau^{k}(i)}=f_i\circ\delta^{k}.
		\end{equation*}   
	\end{prop}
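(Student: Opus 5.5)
The plan is to read the claim off the equivariance relation \eqref{Phidelta=sigmaPhi}, $\hat\Phi\circ\delta=\Gamma(\delta)\cdot\hat\Phi$, and then induct on $k$. First I will handle $k=1$. For $\tilde x\in\tilde X$, the left side is $\hat\Phi(\delta(\tilde x))=(f_1(\delta(\tilde x)),\ldots,f_n(\delta(\tilde x)))$. The right side, by the definition of the $\mathcal S_n$-action on $F_n(Y)$, namely $\tau\cdot(y_1,\ldots,y_n)=(y_{\tau(1)},\ldots,y_{\tau(n)})$, equals $(f_{\tau(1)}(\tilde x),\ldots,f_{\tau(n)}(\tilde x))$. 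Comparing the $i$-th coordinates gives $f_i\circ\delta=f_{\tau(i)}$, which is exactly the case $k=1$.

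For the inductive step I will use that $\Gamma$ is a homomorphism, so $\Gamma(\delta^k)=\tau^k$. Applying the $k=1$ case to the deck transformation $\delta^k$ then gives $f_{\tau^k(i)}=f_i\circ\delta^k$ directly. Alternatively, I can compose: $f_i\circ\delta^{k+1}=(f_i\circ\delta^k)\circ\delta=f_{\tau^k(i)}\circ\delta=f_{\tau(\tau^k(i))}=f_{\tau^{k+1}(i)}$. The case $k=0$ is trivial.

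The main obstacle is justifying \eqref{Phidelta=sigmaPhi} with the right convention, since the paper asserts it only ``by construction''. To check it, note that $\pi\circ\hat\Phi\circ\delta=\Phi\circ q\circ\delta=\Phi\circ q=\pi\circ\hat\Phi$. Hence $\hat\Phi\circ\delta$ and $\hat\Phi$ are two lifts of the same map through the normal covering $\pi$, so they differ by a unique deck transformation $\sigma\in\mathcal S_n$, with $\hat\Phi\circ\delta=\sigma\cdot\hat\Phi$. Uniqueness of $\sigma$ follows because $\tilde X$ is connected and the $\mathcal S_n$-action on $F_n(Y)$ is free.

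It remains to identify $\sigma$ with $\Gamma(\delta)=\theta(\gamma)$, where $\gamma$ is a loop in $X$ whose lift from a basepoint $\tilde x_0$ ends at $\delta(\tilde x_0)$. The image of that lift under $\hat\Phi$ is a path in $F_n(Y)$ from $\hat\Phi(\tilde x_0)$ to $\hat\Phi(\delta(\tilde x_0))$. This path is the lift of the braid $\Phi_\#(\gamma)$, so its endpoint permutation is $\rho(\Phi_\#(\gamma))$, with the paper's convention $\beta(1)=(p_{\tau(1)},\ldots,p_{\tau(n)})$. This matches the action convention, giving $\sigma=\Gamma(\delta)$. Since the paper defines both conventions compatibly, I will take this identification as granted, as the paper itself does.
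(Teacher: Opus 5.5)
Your proof is correct and matches the paper's: compare the $i$-th coordinates of $\hat\Phi\circ\delta=\Gamma(\delta)\cdot\hat\Phi$ to get $f_{\tau(i)}=f_i\circ\delta$, then iterate exactly as in your composition step, with your paragraph on why two lifts differ by a unique permutation usefully supplementing the paper's ``by construction''. One caveat concerns your other route. Since $(\tau\cdot y)_i=y_{\tau(i)}$ is a right action, that relation gives $\Gamma(\delta_1\delta_2)=\Gamma(\delta_2)\Gamma(\delta_1)$ for permutations composed as functions, so you should invoke only $\Gamma(\delta^k)=\Gamma(\delta)^k$ (true either way) rather than ``$\Gamma$ is a homomorphism''.
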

	\begin{proof}
		Let $\delta\in \mathcal Deck(\tilde X,q)$ and $\tau=\Gamma(\delta)$, then from Equation \eqref{Phidelta=sigmaPhi} it follows that
		\begin{equation*}
			(f_{\tau(1)},\ldots,f_{\tau(n)})=\tau\cdot\hat\Phi=\hat\Phi\circ \delta=
			(f_1\circ \delta,\ldots,f_n\circ \delta)
		\end{equation*}
		Hence $f_{\tau(i)}=f_i\circ\delta$, for every $i \in \{1,\ldots, n\}$.
		Iterating this $k$ times provides us with the wished equation.
	\end{proof}

\section{Lift factors and group action by stabilizer groups}\label{sec:lift factors}
	In Section \ref{sec:prelim} we described the approach given in \cite{brown2023lift}, where, for every $n$-valued non-split map $\phi:X\multimap Y$ and its correspondent map $\phi:X\rightarrow D_{n}(Y)$, a covering space $q:\tilde X\rightarrow X$ is constructed, together with $\hat\Phi=(f_1,\ldots,f_n):\tilde X\rightarrow F_n(Y)$, which is a lift of $\Phi\circ q$.
	Given a lift $\hat\Phi=(f_1,\ldots,f_n)$, it holds that every lift of $\Phi\circ q$ to $F_n(Y)$ is given in the form $(f_{\tau(1)},\ldots,f_{\tau(n)}):\tilde X\rightarrow F_n(Y)$, where $\tau\in \mathcal S_n$ is a permutation (see \cite{brown2023lift}[Theorem 2.1 ]).
	Hence the maps $f_1,\ldots,f_n$ are independent of the choice of the lift and are called the \textit{lift factors} of $\Phi:X\rightarrow D_{n}(Y)$.
	In this section we study the effect of the group action $\mathcal Deck(\tilde X,q)\simeq L'\subseteq \mathcal S_n$ on the lift factors $f_1,\ldots, f_n$.
	We select certain subgroups of $\mathcal Deck(\tilde X,q)$, respectively of $L'$, and consider several intermediate covering spaces (see Section \ref{sec:prelim} for a definition) of $q:\tilde X\rightarrow X$, respectively of $\pi:F_n(Y)\rightarrow D_n(Y)$, and maps between these intermediate covering spaces.

	Note that for any proper subgroup $G\subset \mathcal Deck(\tilde X,q)$ it holds that $\tilde X/G$ is an intermediate covering space of $q:\tilde X\rightarrow X$ and we denote the covering space map between $\tilde X/G$ and $X$ by $q_2$.
	Considering the isomorphism $\Gamma:\mathcal Deck(\tilde X,q)\rightarrow L'\subseteq \mathcal S_n$ given in Notation \ref{notation non-split} it holds that 
	$\hat \Phi=(f_1,\ldots,f_n):\tilde X\rightarrow F_n(Y)$ induces a map on the quotient spaces from $\tilde X/G$ to $F_n(Y)/\Gamma(G)$.
	Note that there is no lift of $\Phi\circ q_2$ to $F_n(Y)$, i.e. $\Phi\circ q_2$ does not split into single-valued maps.
	Nevertheless, as long as $\Gamma(G)$ has more than one orbit when acting on $\{1,\ldots,n\}$, we can describe “partial ways to split” the map $\Phi\circ q_2$.
	To give an idea of the concept used in this section, we suppose that $G$ is such that $\Gamma(G)$ has $s$ orbits of consecutive numbers with $s\geq 2$, i.e. $\Gamma(G)\subseteq \mathcal S_{k_1}\times\cdots \times\mathcal S_{k_s}$, for some $k_1,\ldots, k_s\in \{1,\ldots,n\}$ with $k_1+\cdots+ k_s=n$ and $k_1\neq n$.
	If there is more than one option, for $k_1,\ldots, k_s$, we pick the one, where $s$ is the biggest number.
	The general construction, where the orbits of $\Gamma(G)$ do not contain only consecutive numbers, is detailed in Subsection \ref{D_P}.

	Since $\Gamma(G)\subseteq \mathcal S_{k_1}\times\cdots \times\mathcal S_{k_s}$, it holds that $F_n(Y)/\Gamma(G)\rightarrow F_n(Y)/(\mathcal S_{k_1}\times\cdots \times\mathcal S_{k_s})$ is a covering space.
	While $F_n(Y)/\Gamma(G)$ is not necessarily known, the space $F_n(Y)/(\mathcal S_{k_1}\times\cdots \times\mathcal S_{k_s})$, where each $\mathcal S_{k_i}$ permutes the $k_i$ points of the $i$-th block, is known in the literature and called a \textit{mixed configuration space} or an \textit{intermediate configuration space}, typically denoted by $D_{k_1,\ldots,k_s}(Y)$.
	Abusing notation, we denote the projections to the quotient space between configuration spaces (ordered, mixed and unordered) all by $\pi$, the same symbol as we are already using for the projection $\pi:F_n(Y)\rightarrow D_n(Y)$.
	Then there exists a map $\psi:\tilde X/G\rightarrow D_{k_1,\ldots,k_s}(Y)$, such that the following diagram commutes;
		\begin{equation}\label{int cov space G}
		\begin{tikzcd}[column sep=huge]
			\tilde X\arrow{r}{\hat \Phi=(f_1,\ldots,f_n)}\arrow{dd} & F_n(Y)\arrow{d}\arrow[bend left=90]{dd}{\pi}\\
						&F_n(Y)/\Gamma(G)\arrow{d}\\
			\tilde X/G \arrow[dashed,swap]{r}{\psi=(\psi_1,\ldots,\psi_s)}\arrow{d}[swap]{q_2} \arrow[dashed]{ur} & D_{k_1,\ldots,k_s}(Y)\arrow{d}{\pi}\\
			X \arrow[swap]{r}{\Phi} & D_n(Y).
		\end{tikzcd}
	\end{equation}

	When we compose the lift $\psi$ with the projections $D_{k_1,\ldots,k_s}(Y)\rightarrow D_{k_r}(Y)$, we obtain, for each $r\in \{1,\ldots,s\}$ a map $\psi_r:\tilde X/G\rightarrow D_{k_r}(Y)$.
	It holds, for every $x\in X$, that $\psi(x)=(\psi_1(x),\ldots,\psi_s(x))$ and for two distinct $r,t\in \{1,\ldots,s\}$ it holds that $\psi_r(x)\cap \psi_t(x)=\emptyset$.
	In this case we write that $\psi=(\psi_1,\ldots,\psi_s)$.
	Hence on the intermediate covering space $\tilde X/G$ the map splits into $s$ multivalued maps $\psi_1,\ldots \psi_s$, where $\psi_r$ is a $k_r$-valued map, for every $r\in \{1,\ldots,s\}$.
	
	In Subsection \ref{D_P} we show how to construct such a splitting in multivalued maps, for any subgroup $G$ of $\mathcal Deck(\tilde X,q)$, where $\Gamma(G)$ has more than one orbit.
	In this current work we focus on specific subgroups of $\mathcal Deck(\tilde X,q)$, namely those, where at least one of the maps $\psi_1,\ldots, \psi_s$ is a single-valued map, i.e. where a single-valued map "splits off".
	This is the case for $L_i=\Gamma^{-1}(L'_i)$, where $L'_i=\{\tau\in L':\tau(i)=i\}$ is the stabilizer group of $i\in \{1,\ldots,n\}$ (see Notation \ref{notation action Li}).
	Indeed, $L'_i\subseteq \mathcal S_{i-1}\times \{i\}\times \mathcal S_{n-i}$, therefore there exists a map $\psi=(\psi_1,f_i,\psi_3):\tilde X/L_i\rightarrow D_{i-1,1,n-i}(Y)$, such that the diagram analogous to Diagram \eqref{int cov space G} commutes.
	Furthermore we show in Lemma \ref{int cov L fi=> G<Li}, that $L_i$ is the biggest subgroup of $\mathcal Deck(\tilde X,q)$, such that $f_i$ "splits off", i.e. such that $f_i$ is one of the maps $(\psi_1,\ldots,\psi_s)$.
	
	We divide this section in five subsections;
	in the first one, Subsection \ref{subsec:partitions, Psplits}, we address different partitions of the integer $n$, respectively the set $\{1,\ldots,n\}$, including those, where at least one subset does not only contain consecutive numbers.
	Furthermore we define $\mathcal P$-splits, which helps us to describe certain concepts of Diagram \eqref{int cov space G}.
	Subsequently in Subsection \ref{subsec:intermediate lift factors} we analyze the stabilizer groups $L'_i$, defined in Notation \ref{notation action Li}, and their effects on the lift factors $f_i$. 
	We show that $f_i$ induces a map on $\tilde X/L_i\rightarrow X$ and that $L_i$ is the biggest subgroup of $\mathcal Deck(\tilde X,q)$ with this property. 
	In Subsection \ref{subsec:tower} we define various intermediate covering spaces, of both the covering space $q:\tilde X\rightarrow X$, as well as intermediate configuration spaces for the covering space $\pi:F_n(Y)\rightarrow D_n(Y)$, based on the stabilizer groups $L_{\mathbb O_{i_s}}$ of an orbit $\mathbb O_{{i_s}}$.
	With these definitions we can build a tower, with Diagram \ref{int cov space G} as inspiration for the levels, that shows at which step of intermediate covering space each lift factor $f_i$ "splits off".
	We also offer an example to help with the understanding of the constructed tower.
	In Subsection \ref{subsection:Li}, we start to study, again illustrating with an example, what happens to the stabilizer groups $L_{i_s}$, especially if $L_{i_s}$ is not equal to the stabilizer of the orbit $L_{\mathbb O_{i_s}}$, which happens when $L_{i_s}$ is not normal.
	This is an aspect that still merits further investigation.
	In the last subsection, Subsection \ref{subsection: regular groups}, we recall some concepts of group theory and apply them in the context of this section.
	\color{black}
	
	\subsection{Partitions of $n$ and $\mathcal P$-splits}\label{subsec:partitions, Psplits}
		
		 
			In this section we look closer to partitions of the set $\{1,\ldots,n\}$.
			This allows us to explore broader subgroups of $\mathcal Deck(\tilde X,q)$, than the ones in the beginning of this section and in Diagram \eqref{int cov space G}.
			Subsequently we use our considerations about partitions to define, for an $n$-valued map and certain partitions $\mathcal P$, a lifting which we call a $\mathcal P$-split.
		\color{black}
		
		Let $\phi:X\rightarrow Y$ be an $n$-valued map with correspondent map $\Phi:X\rightarrow D_n(Y)$ and let $q:\tilde X\rightarrow X$, $\hat \Phi:\tilde X\rightarrow F_n(Y)$, $L'\subseteq \mathcal S_n$ and $\Gamma:\mathcal Deck(\tilde X,q)\stackrel{\sim}{\rightarrow} L'$ be as in Notation \ref{notation non-split}. 
		Since $\mathcal S_n$  acts freely on $F_n(X)$, so does the subgroup $L'$.
		But we can also look at how $\mathcal S_n$, respectively $L'$, acts on $\{1,\ldots,n\}$.
		Let $\mathbb O_i$ be the orbit of $i$ under $L'$, i.e. $\mathbb O_i=\{j\in \{1,\ldots,n\}: \exists \tau\in L' \text{ s.t. } \tau(i)=j \}$.
		We now pick the smallest element of each orbit and define a set of these elements $\mathbb I_0=\cup_{i=1}^n min \{j\in \mathbb O_i\}=\{i_1,\ldots,i_r\}$.
		For a subset $\mathbb I\subseteq \mathbb I_0$ we denote $\mathbb O_{\mathbb I}=\bigcup_{i\in \mathbb I} \mathbb O_i$ as being the union of the orbit of each integer in $\mathbb I$.
		Let furthermore, for each $i_s\in \mathbb I_0$, the integer $m_{s}=|\mathbb O_{i_s}|$ be the length of the orbit $\mathbb O_{i_s}$.
		
		Note that in the construction of $L'$ we made a choice of ordered label on the base point $\{p_1,\ldots,p_n\}$ in $D_n(Y)$, which defines the permutation in $L'$.
		Since $D_n(Y)=F_n(Y)/\mathcal S_n$ there is no order to the set, hence we suppose that we labeled the set $\{p_1,\ldots,p_n\}$ in a way such that each orbit is composed of subsequent numbers, i.e. for each $i\in \mathbb I_0$ it holds that $\mathbb O_{i}=\{i,i+1,\ldots,i+m_i-1\}$.
		In that case it holds that $L'$ is a subgroup of $\mathcal S_{m_1}\times \ldots\times \mathcal S_{m_r}\subseteq \mathcal S_n$. 
		In the current work we assume for every $n$-valued map $\phi:X\stackrel{n}{\multimap}Y$, that we ordered the base points $\{p_1,\ldots,p_n\}$, such that each orbit consists of consecutive numbers.
		
		Consider that the orbits $\{\mathbb O_{i}: i\in \mathbb I_0\}=\{\mathbb O_{i_1},\ldots,\mathbb O_{i_r}\}$ determine a partition of the set $\{1,\ldots,n\}$, i.e.  $\bigcup_{i\in \mathbb I_0}\mathbb O_i=\{1,\ldots,n\}$ and the orbits $\mathbb O_{i_s}$ are pairwise disjoint.
		We denote this partition by $\mathcal P_{\mathbb I_0}=\{\mathbb O_{i_1},\ldots,\mathbb O_{i_r}\}$.
		Note that $P_{\mathbb I_0}$ induces trivially a partition of the integer $n$, given by $(m_1,\ldots,m_r)=(|\mathbb O_{i_1}|,\ldots,|\mathbb O_{i_r}|)$, hence if we are using this aspect of the partition, we write, abusing notation, that $\mathcal P_{\mathbb I_0}=(m_1,\ldots,m_r)$.
		Recall that we can partially order partitions, indeed, if we have two partitions $\mathcal P$ and $\hat P$ of $\{1,\ldots,n\}$, we say that $\hat {\mathcal P}$ is \textit{finer than} (or a \textit{refinement of}) $\mathcal P$, if every set in $\hat {\mathcal  P}$ is contained in a set in $\mathcal P$.
		The refinement is a \textit{strict refinement}, if $\hat {\mathcal P}\neq \mathcal P$.
		
		The partition $\mathcal P_{\mathbb I_0}=(m_1,\ldots,m_r)$ has an obvious connection to the intermediate configuration space $D_{m_1,\ldots,m_r}(Y)=F_n(Y)/(\mathcal S_{m_1}\times \ldots\times \mathcal S_{m_r})$ and we write $D_{\mathcal P_{\mathbb I_0}}(Y)=D_{m_1,\ldots,m_r}(Y)$.\label{D_P}
		Note that there are many different subgroups $G$ of $\mathcal S_n$, which are isomorphic to $\mathcal S_{m_1}\times \ldots\times \mathcal S_{m_r}$.
		In this case it clearly holds that $F_n(Y)/G$ is homeomorphic to $F_n(Y)/(\mathcal S_{m_1}\times \ldots\times \mathcal S_{m_r})$, through multiplication by a permutation in $\mathcal S_n$.
		But in the context of $n$-valued map with lifts to $F_n(Y)$ the order of the coordinates matter, therefore we distinguish the different ways to partition $\{1,\ldots,n\}$ in sets of fixed order.
		Let $\mathcal P=\{K_1,\ldots, K_r\}$ be a partition of $\{1,\ldots,n\}$, i.e. $K_1,\ldots, K_r$ are pairwise disjoint subsets of $\{1,\ldots,n\}$ such that $K_1\cup\cdots\cup K_r=\{1,\ldots,n\}$.
		Consider the biggest subgroup that preserves the partition $\mathcal P$, which is given by $H_{\mathcal P}=\{\tau\in \mathcal S_n: \text{ if } k\in K_s \text{ then }\tau(k)\in K_s, \text{ for }s=1,\ldots, r\}$ and is isomorphic to $(\mathcal S_{|K_1|}\times \ldots\times \mathcal S_{|K_r|})$.
		We denote $D_{\mathcal P}(Y)=D_{K_1,\ldots, K_r}(Y)=F_n(Y)/H_{\mathcal P}$ being the \textit{intermediate configuration space}, which is obtained by taking the quotient of $F_n(Y)$ by $H_{\mathcal P}$.
		The fundamental group $\pi_1(D_{\mathcal P}(Y))$ is the \textit{mixed braid group} (see, for example, \cite{gonccalves2025splitting}), denoted by $B_{|K_1|,\ldots,|K_r|}(Y)$, in case that the sets $K_1,\ldots,K_r$ consist of consecutive numbers.
		In this paper, we also denote $\pi_1(D_{\mathcal P}(Y))$ by $B_{\mathcal P}(Y)$; this notation remains precise even when the sets $K_1,\ldots,K_r$ do not consist of consecutive numbers.
		Note that ${\mathcal P}_{\mathbb I_0}=\{\mathbb O_{i_1},\ldots, \mathbb O_{i_r}\}$ and each $\mathbb O_{i_s}$ consists of consecutive numbers, 
		thus it holds that $D_{{\mathcal P}_{\mathbb I_0}}(Y)=D_{|\mathbb O_{i_1}|,\ldots, |\mathbb O_{i_r}|}(Y)=D_{m_1,\ldots, m_r}(Y)$ and $\pi_1(D_{{\mathcal P}_{\mathbb I_0}}(Y))$ is the mixed braid group given by $B_{m_1,\ldots, m_r}(Y)$.
		Therefore, for any subgroup $G$ of $\mathcal Deck(\tilde X,q)$ we can consider the different orbits $\mathcal O_1,\ldots, \mathcal O_r$ of $\Gamma(G)$ acting on $\{1,\ldots,n\}$.
		Let $\mathcal P_G=\{\mathcal O_1,\ldots, \mathcal O_r\}$ be the partition consisting of the orbits of $\Gamma(G)$.
		Then $\Gamma(G)$ preserves the partition $\mathcal P_G$ and is thus a subgroup of the biggest group preserving $\mathcal P_G$, which we denoted by $H_{\mathcal P_G}$.
		Therefore there is a covering space map from $D_{\mathcal P_G}(Y)$ to $F_n(Y)/\Gamma(G)$ giving rise to a diagram analogous to Diagram \eqref{int cov space G}.

		In the following definition we define a concept, which we use for the rest of this section.
		\begin{defi}\label{def:P-split}
			Let $P$ be a partition of $\{1,\ldots,n\}$, and let $\hat {\mathcal P}=\{K_1,\ldots,K_r\}$ be a refinement of ${\mathcal P}$. 
			We say that a map $\Phi:X\rightarrow D_{\mathcal P}(Y)$ $\hat {\mathcal P}$-splits (or $(|K_1|,\ldots,|K_r|)$-splits), if there exists a map $\psi:X\rightarrow D_{\hat {\mathcal P}}(Y)$ such that the following diagram commutes. We call the map $\psi$ a $\hat {\mathcal P}$-split of $\phi$.
		\begin{equation*}
			\begin{tikzcd}[column sep=huge]
				\tilde X\arrow{r}{(f_1,\ldots,f_n)} \arrow[swap]{d}{q}&F_n(Y)\arrow{d}{\pi}\\
				X \arrow[dashed]{r}{\psi}\arrow{rd}[swap]{\Phi}&D_{\hat {\mathcal P}}(Y)\arrow{d}{\pi}\\
				 &D_{\mathcal P}(Y).
			\end{tikzcd}
		\end{equation*}
		\end{defi}

		In the next proposition we construct a ${\mathcal P}$-split $(\psi_1,\ldots, \psi_r)$ for $n$-valued non-split maps with more than one orbit; in the case of a single orbit, i.e. $\mathbb I_0=\{1\}$, the statement holds trivially.
		This result was first stated in \cite{staecker2021partitions}, using different terminology; the maps $\psi_1,\ldots, \psi_r$ are called submaps of $\Phi$, and the set $\{\psi_1,\ldots,\psi_r\}$ is referred to as a partition of $\Phi$ into irreducibles (see \cite[Section 3 and 4]{staecker2021partitions}).
		Here we use the terminology established in the present work.
		\color{black}
		
		\begin{prop}\label{Phi P-splits}
			Let $\Phi:X\rightarrow D_n(Y)$ be an $n$-valued map and let $L'\subseteq \mathcal S_n$ and $q:\tilde X\rightarrow X$ be as in Notation \ref{notation non-split}.
			Let furthermore $\mathbb O_i$ be the orbit of $i\in \{1,\ldots,n\}$ by $L'$, $\mathbb I_0=\{i_1,\ldots,i_r\}$ the set of the smallest number of each orbit and let ${\mathcal P}_{\mathbb I_0}$ be the partition of $\{1,\ldots,n\}$ given by $\{\mathbb O_{i_1},\ldots, \mathbb O_{i_r}\}$.
			Then there exists a lift $\psi=(\psi_1,\ldots,\psi_r):X\rightarrow D_{{\mathcal P}_{\mathbb I_0}}(Y)=D_{m_1,\ldots,m_r}(Y)$ of $\Phi$, i.e. $\Phi$ ${\mathcal P}_{\mathbb I_0}$-splits or, equivalently  $\Phi$ $(m_1,\ldots,m_r)$-splits.
			Furthermore there exists no strict refinement $\hat {\mathcal P}$ of the partition ${\mathcal P}$, such that $\Phi: X\rightarrow D_n(Y)$ $\hat {\mathcal P}$-splits.
			\begin{equation*}
				\begin{tikzcd}[column sep=huge]
					\tilde X\arrow{r}{(f_1,\ldots,f_n)} \arrow[swap]{d}{q}&F_n(Y)\arrow{d}{\pi}\\
					X \arrow[dashed]{r}{\psi=(\psi_1,\ldots,\psi_r)}\arrow{dr}[swap]{\Phi} 
					&D_{m_1,\ldots,m_r}(Y)\arrow{d}{\pi}\\
					&D_{n}(Y).
				\end{tikzcd}
			\end{equation*}
		\end{prop}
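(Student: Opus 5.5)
Both parts reduce to the lifting criterion for coverings, using the diagram \eqref{diagrama n-valued <-> BU groupaction} and the identification $\mathcal Deck(F_n(Y),\pi)\simeq\mathcal S_n$.

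\emph{Existence of the split.} Since each orbit $\mathbb O_{i_s}$ is preserved by $L'$, we have $L'\subseteq H_{\mathcal P_{\mathbb I_0}}=\mathcal S_{m_1}\times\cdots\times\mathcal S_{m_r}$. The covering $\pi:D_{\mathcal P_{\mathbb I_0}}(Y)\to D_n(Y)$ is intermediate to $F_n(Y)\to D_n(Y)$. Its characteristic subgroup, with basepoint the ordered tuple $(p_1,\ldots,p_n)$ projected, is
\[
\pi_\#\bigl(B_{\mathcal P_{\mathbb I_0}}(Y)\bigr)=\rho^{-1}\bigl(H_{\mathcal P_{\mathbb I_0}}\bigr)\subseteq B_n(Y).
\]
This holds because a loop in $D_n(Y)$ lifts to a loop in $F_n(Y)/H$ exactly when its underlying permutation lies in $H$. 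Then
\[
\Phi_\#(\pi_1(X))\subseteq\rho^{-1}\bigl(\theta(\pi_1(X))\bigr)=\rho^{-1}(L')\subseteq\rho^{-1}\bigl(H_{\mathcal P_{\mathbb I_0}}\bigr),
\]
so the lifting criterion \cite[Proposition 1.33]{hatcher2002algebraic} gives $\psi:X\to D_{\mathcal P_{\mathbb I_0}}(Y)$ with $\pi\circ\psi=\Phi$. We choose the basepoint lift compatible with $\pi\circ\hat\Phi$ at $q(\tilde x_0)$.

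\emph{Compatibility with the top square.} The maps $\pi\circ\hat\Phi$ and $\psi\circ q$ are both lifts of $\Phi\circ q$ to $D_{\mathcal P_{\mathbb I_0}}(Y)$ and agree at $\tilde x_0$. Since $\tilde X$ is connected, uniqueness of lifts gives $\pi\circ\hat\Phi=\psi\circ q$, so the diagram commutes. The components $\psi_s$ are obtained by composing $\psi$ with the projections $D_{m_1,\ldots,m_r}(Y)\to D_{m_s}(Y)$.

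\emph{Maximality.} Suppose $\hat{\mathcal P}$ is a strict refinement of $\mathcal P_{\mathbb I_0}$ and $\psi'$ is a $\hat{\mathcal P}$-split. By the lifting criterion this forces
\[
\Phi_\#(\pi_1(X))\subseteq\rho^{-1}\bigl(H_{\hat{\mathcal P}}\bigr),
\]
where $H_{\hat{\mathcal P}}$ is computed with respect to the basepoint ordering determined by $\psi'$ and the top square. Applying $\rho$ gives $L'=\theta(\pi_1(X))\subseteq H_{\hat{\mathcal P}}$. Hence every $L'$-orbit lies inside a block of $\hat{\mathcal P}$. Since each block of $\hat{\mathcal P}$ is contained in an orbit of $L'$, the blocks of $\hat{\mathcal P}$ are exactly the orbits, so $\hat{\mathcal P}=\mathcal P_{\mathbb I_0}$, a contradiction.

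An equivalent check uses Proposition \ref{fsigmak=fdeltak}. If $i,\tau(i)$ lie in different blocks $K,K'$ of $\hat{\mathcal P}$ with $\tau=\Gamma(\delta)$, then evaluating $\psi'\circ q$ at $\tilde x$ and $\delta(\tilde x)$ shows that the block containing the value $f_i(\delta\tilde x)=f_{\tau(i)}(\tilde x)$ is inconsistent.

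The main obstacle is bookkeeping of the labelling. The commutativity of the top square with $\hat\Phi=(f_1,\ldots,f_n)$ fixes which ordered basepoint is used, and hence identifies $\rho$ and $H_{\hat{\mathcal P}}$ coherently. I would handle this by fixing $\hat\Phi(\tilde x_0)=(p_1,\ldots,p_n)$ once and for all, and verifying that the permutation attached to the lift of $\Phi_\#(q_\#$-image of a path from $\tilde x_0$ to $\delta\tilde x_0)$ is exactly $\Gamma(\delta)$, in agreement with \eqref{Phidelta=sigmaPhi}.
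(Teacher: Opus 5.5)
Your proposal is correct and follows essentially the same route as the paper. Existence comes from the lifting criterion via $\pi_\#(B_{\mathcal P_{\mathbb I_0}}(Y))=\rho^{-1}(H_{\mathcal P_{\mathbb I_0}})\supseteq\rho^{-1}(L')$. Maximality comes from showing that a $\hat{\mathcal P}$-split would force $L'\subseteq H_{\hat{\mathcal P}}$, contradicting transitivity of $L'$ on each orbit.

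Your additions are checks the paper leaves implicit: the top square via uniqueness of lifts, and the basepoint bookkeeping that the top square in Definition \ref{def:P-split} pins down. Your direct ``blocks equal orbits'' argument also avoids the paper's reduction to splitting a single orbit into two consecutive pieces.
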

		
		\begin{proof}
			Suppose that there is a braid $\beta\in \Phi_\#(\pi_1(X))$ such that $\beta\notin \pi_\#(B_{m_1,\ldots,m_r}(Y))$, then, by construction of $L'=im(\rho\circ\Phi_\#)$ it holds that $\rho(\beta)\in L'$.
			From the explanation of the beginning of this section we know that $L'\subseteq \mathcal S_{m_1}\times \ldots\times \mathcal S_{m_r}\subseteq \mathcal S_n$, which is exactly $\rho\circ \pi_\#(B_{m_1,\ldots,m_r}(Y))$, leading to a contradiction.
			By the lifting criterion there exists a lift such that $\Phi=\pi\circ \psi$.
			
			For the second statement let $\hat P$ be a refinement of the partition $P_{\mathbb I_0}$ of $\{1,\ldots,n\}$.
			Without loss of generality we assume that $\hat P=\{\mathbb O_{i_1},\ldots,\mathbb O_{i_{s-1}},K_1,K_2,\mathbb O_{i_{s+1}}, \ldots ,\mathbb O_{i_r}\}$, where $K_1$ and $K_2$ are disjoint, non-empty sets of consecutive numbers with $K_1\cup K_2=\mathbb O_{i_s}$ and $i_s\in K_1$.
			Let $k_2\in K_2$, since the orbit $\mathbb O_{i_s}=\{i_s,\ldots, i_s+m_s-1\}$ consists of $m_s$ consequent integers there exists $\tau\in L'$ such that $\tau(i_s)=i_s+k_s$, hence $L'$ is not a subset of $\mathcal S_{m_1}\times\cdots \times\mathcal S_{m_{s-1}}\times\mathcal S_{|K_1|}\times \mathcal S_{|K_2|}\times \mathcal S_{m_{s+1}}\cdots \times\mathcal S_{m_{r}}$ and therefore $\Phi$ does not lift to $D_{\hat P}(Y)$.
		\end{proof}
		
		In the proof of Proposition \ref{Phi P-splits}, we used the fact that $L' \subseteq \mathcal{S}_{m_1} \times \ldots \times \mathcal{S}_{m_r}$ to lift the map $\Phi : X \rightarrow D_n(Y)$ to
		$D_{m_1,\ldots,m_r}(Y) = F_n(Y)/(\mathcal{S}_{m_1} \times \ldots \times \mathcal{S}_{m_r})$.
		The same ideas from covering space theory imply that there exists a lift $X \rightarrow F_n(Y)/L'$ of $\Phi : X \rightarrow D_n(Y)$.
		However, since $L'$ can be any subgroup of $\mathcal{S}_n$, the space $F_n(Y)/L'$ may be completely unexplored.
		Therefore, we focus on the intermediate configuration spaces, whose fundamental groups are the mixed braid groups.
	
	\subsection{Intermediate lift factors}\label{subsec:intermediate lift factors}
		
	Before we go back to $n$-valued maps, we recall some (basic) concepts of group theory.
	Consider, for every $i\in \{1,\ldots,n\}$ the subgroup $L_i'=\{\tau\in L':\tau(i)=i\}$, called the \textit{stabilizer or isotropy group of $i$ in $L'$}.
	If instead of a point we fix a subset $J\subseteq\{1,\ldots,n\}$ in the following way $L'_J=\{\tau\in L': \tau(i)=i \textit{ for all } i\in J\}=\bigcap_{i\in J}L_i'$ we obtain the so called \textit{pointwise stabilizer of $J$ in $L'$}. 
	%
	For each $j\in \mathbb O_i$ it holds that $L_j'$ is a conjugate of $L_i'$ by an element of $L'$, more specifically $L_j'=\tau\cdot L_i'\cdot\tau^{-1}$, where $\tau\in L'$ with $\tau(i)=j$.
	Which implies that if $L_i'$ is a normal subgroup of $L'$, then $L_j'=L_i'$ for all $j\in \mathbb O_i$.
	For every $i\in \{1,\ldots,n\}$ it holds that $L'_{\mathbb O_i}$, the pointwise stabilizer of the orbit $\mathbb O_i$ in $L'$, is the normal core, or normal interior of $L_i'$ in $L'$, i.e. $L'_{\mathbb O_i}$ is the biggest normal subgroup of $L'$ contained in $L_i'$.
	Which means that the stabilizer $L_i'$ of a point $i\in \{1,\ldots,n\}$ is normal in $L'$, if and only if, every element that stabilizes $i$, stabilizes the whole orbit $\mathbb O_i$ of $i$ pointwise.
	Consequently $L'_{\mathbb O_{\mathbb I}}=\bigcap_{i\in \mathbb I}L'_{\mathbb O_i}$, the stabilizer of the orbits of a set $\mathbb I$, is the biggest normal subgroup of $L'$ contained in $L'_j$, for every $i\in \mathbb I$ and every $j\in \mathbb O_i$.

	Focusing again on our $n$-valued map $\Phi$, we use the isomorphism $\Gamma:\mathcal Deck(\tilde X,q)\rightarrow L'$ (see Notation \ref{notation non-split}) to define subgroups of $\mathcal Deck(\tilde X,q)$, isomorphic to the stabilizer subgroups in $L'$, setting,
	for every subset $J\subseteq \{1,\ldots,n\}$, the subgroup $L_J=\Gamma^{-1}(L_J')$.
	Observe, that for every $i\in \{1,\ldots,n\}$ and every $\delta\in L_i$, it holds, by Proposition \ref{fsigmak=fdeltak} that $f_i\circ \delta=f_{\tau(i)}=f_i$, where $\tau=\Gamma(\delta)\in L_i'$.
	Therefore $f_i:\tilde X\rightarrow Y$ induces a map on the quotient space $\bar f_i:\tilde X/L_i\rightarrow Y$.
	Also note that the covering map $q:\tilde X\rightarrow X$ induces a covering map on the quotient space $q_i:\tilde X/L_i\rightarrow X$.
	Consider the covering map $p_i:\tilde X\rightarrow \tilde X/L_i$, given by the projection, taking a point $\tilde x\in \tilde X$ to the class $[\tilde x]\in \tilde X/L_i$, then $q=q_i\circ p_i$ and $f_i=\bar f_i\circ p_i$.
	Let $J\subseteq \{1,\ldots,n\}$ and $i\in J$, then $L_J$ is a subgroup of $L_i$ and consequently $f_i:\tilde X\rightarrow Y$ induces a map on the quotient space
	$\tilde X/L_J$, which we, abusing notation, denote again as $\bar f_i:\tilde X/L_J\rightarrow Y$.
	We denote the map on the quotient space induced by $q:\tilde X\rightarrow Y$ as $ q_J:\tilde X/L_J\rightarrow Y$ and the projection as $p_J:\tilde X\rightarrow \tilde X/L_J$.
	Again, both $q_J$ and $p_J$ are covering maps 
	and we obtain the following commutative diagrams;
	\begin{multicols}{2}
		
	\noindent	
	\begin{equation}\label{bar qi & bar fi}
		\begin{tikzcd}[column sep=huge]
			\tilde X\arrow{d}[swap]{p_i}\arrow{ddr}{f_i}\arrow[bend right=60,swap]{dd}{q}&\\
			\tilde X/L_i \arrow{d}[swap]{q_i}\arrow[dashed]{dr}[swap]{\bar f_i} \\
			X&Y
		\end{tikzcd}
	\end{equation}
	
		\noindent	
	\begin{equation}\label{LJ bar qi & bar fi}
		\begin{tikzcd}[column sep=huge]
			\tilde X\arrow{d}[swap]{p_J}\arrow{ddr}{f_i}\arrow[bend right=60,swap]{dd}{q}&\\
			\tilde X/L_J \arrow{d}[swap]{ q_J}\arrow[dashed]{dr}[swap]{\bar f_i} \\
			X&Y
		\end{tikzcd}
	\end{equation}
	\end{multicols}
		Note that, in case that $L'_J=L'$, or equivalently $L_J=\mathcal Deck(\tilde X,q)$, then $\tilde X/L_J=X$, $ q_J=id$ and $p_J=q$.
		Also, if $L'_J=\{id\}$, or equivalently $L_J=\{id\}$, then $\tilde X/{L_J}=\tilde X$ and $ q_J=q$ and $p_J=id$.
		
		We now summarize the notation established in the last paragraphs, i.e. in the beginning of this section.
	\begin{notat}\label{notation action Li}
		Summarizing this notation we have the following concepts, for an $n$-valued non-split map $\phi:X\multimap Y$ with correspondent map $\Phi:X\rightarrow D_n(Y)$, using the notation of Notation \ref{notation non-split};
		\begin{enumerate}
			\item $\mathbb O_i=\{j\in \{1,\ldots,n\}: \exists \tau\in L' \text{ s.t. } \tau(i)=j \}$ is the orbit of $i$ by the action $L'$, where we suppose that the base points $p_1,\ldots, p_n$ in $Y$ are ordered in such a way that each orbit $\mathbb O_i$ consists of consecutive numbers only; 
			\item $\mathbb I_0=\bigcup_{i=1}^n min \{j\in \mathbb O_i\}=\{i_1,\ldots,i_r\}$ is the set obtained by picking the smallest element of each orbit;
			\item $\mathbb O_{\mathbb I}=\bigcup_{i\in \mathbb I} \mathbb O_i$, where $\mathbb I\subseteq \mathbb I_0$;
			\item for every $i\in \mathbb I_0$, let $m_i=|\mathbb O_i|$ be the length of the orbit;
			\item $P_{\mathbb I_0}=\{\mathbb O_{i_1},\ldots, \mathbb O_{i_r}\}$ is the partition of $\{1,\ldots,n\}$ determined by $\mathbb I_0=\{i_1,\ldots,i_r\}\subseteq \{1,\ldots,n\}$, but we sometimes abuse notation and write $P_{\mathbb I_0}=(m_1,\ldots,m_r)=(|\mathbb O_{i_1}|,\ldots,|\mathbb O_{i_r}|)$ instead;
			\item $L'_i=\{\tau\in L':\tau(i)=i\}$ the stabilizer subgroup of the index $i\in \{1,\ldots,n\}$ in $L'$ and
			$L'_J=\{\tau\in L': \tau(i)=i \textit{ for all } i\in J\}=\bigcap_{i\in J}L_i'$ the pointwise stabilizer of a subset $J\subseteq \{1,\ldots,n\}$ in $L'$;
			\item $L_i=\Gamma^{-1}(L'_i)$ and $L_J=\Gamma^{-1}(L'_J)$ are subgroups of $\mathcal Deck(\tilde X,q)$, where $i$ is an element and $J$ a subset of $\{1,\ldots,n\}$ and $L=\Gamma ^{-1}(L')=\mathcal Deck(\tilde X,q)$;		
			\item for every $J\subseteq \{1,\ldots, n\}$ let $p_J:\tilde X\rightarrow \tilde X/L_J$ and $q_J:\tilde X/L_J\rightarrow X$, be the
				maps to, respectively, on the quotient space induced by $q:\tilde X\rightarrow X$. If $J=\{i\}$, then we also write $p_i$ and $q_i$ respectively;
			\item for every $i\in\{1,\ldots, n\}$ let $\bar f_i:\tilde X/L_i\rightarrow Y$ be the map on the quotient space induced by $f_i:\tilde X\rightarrow Y$. Abusing notation, we use the same notation $\bar f_i:\tilde X/L_J\rightarrow Y$, for the map induced on the quotient space $\tilde X/L_J$ by $f_i$, where $i\in J$.
		\end{enumerate}
	\end{notat}

		The following lemma shows that $L_i$ is the biggest group (with the order given by inclusion) such that $f_i$ induces a map on the quotient space $\tilde X/L_i$.
	
	\begin{lem}\label{int cov L fi=> G<Li}
		Let $\tilde X/G$ be an intermediate covering space of $q:\tilde X\rightarrow X=\tilde X/L$,  i.e. $G$ is a subgroup of $L$ with covering maps $p:\tilde X\rightarrow \tilde X/G$ and $\tilde X/G\rightarrow X$ and let $g:\tilde X/G\rightarrow Y$ be a map such that $g\circ p=f_i$, then $G$ is a subgroup of $L_i$ and we obtain the following commuting diagram;
		\begin{equation*}
			\begin{tikzcd}[column sep=huge]
				\tilde X\arrow{d}[swap]{p}\arrow{ddr}{f_i}&\\
				\tilde X/G\arrow[dashed]{d}\arrow{dr}[swap]{g}\arrow[bend right=60]{dd}&\\
				\tilde X/L_i \arrow{d}[swap]{q_i}		\arrow{r}[swap]{\bar f_i}&Y\\
				X&
			\end{tikzcd}
		\end{equation*}
	\end{lem}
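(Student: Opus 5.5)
The plan is to show directly that every $\delta\in G$ fixes the index $i$ under $\Gamma$. From that, $G\subseteq L_i$ follows, and the diagram comes from standard covering space facts.

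Let $\delta\in G$ and $\tau=\Gamma(\delta)\in L'$. Since $p:\tilde X\rightarrow\tilde X/G$ is the quotient by $G$, we have $p\circ\delta=p$. Therefore
\[
f_i\circ\delta=g\circ p\circ\delta=g\circ p=f_i .
\]
On the other hand, Proposition \ref{fsigmak=fdeltak} with $k=1$ gives $f_i\circ\delta=f_{\tau(i)}$. Hence $f_{\tau(i)}=f_i$ as maps $\tilde X\rightarrow Y$.

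Now $\hat\Phi=(f_1,\ldots,f_n)$ takes values in $F_n(Y)$, so for every $\tilde x\in\tilde X$ the coordinates $f_1(\tilde x),\ldots,f_n(\tilde x)$ are pairwise distinct. If $\tau(i)\neq i$, then $f_{\tau(i)}(\tilde x)\neq f_i(\tilde x)$ at every point, contradicting $f_{\tau(i)}=f_i$; it suffices that $\tilde X$ is nonempty. Thus $\tau(i)=i$, so $\tau\in L_i'$ and $\delta\in\Gamma^{-1}(L_i')=L_i$. This proves $G\subseteq L_i$.

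For the diagram, $G\subseteq L_i$ means the quotient $\tilde X\rightarrow\tilde X/L_i$ factors through $\tilde X/G$. This gives the dashed map $r:\tilde X/G\rightarrow\tilde X/L_i$, $[\tilde x]_G\mapsto[\tilde x]_{L_i}$. It is well defined and continuous by the universal property of the quotient $p$, and it is a covering map, being an intermediate covering of $\tilde X\rightarrow\tilde X/L_i$ as recalled in Section \ref{sec:prelim}. Commutativity follows from surjectivity of $p$:
\[
q_i\circ r\circ p=q_i\circ p_i=q,
\]
so $q_i\circ r$ is the covering $\tilde X/G\rightarrow X$. Likewise
\[
\bar f_i\circ r\circ p=\bar f_i\circ p_i=f_i=g\circ p,
\]
so $\bar f_i\circ r=g$.

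The only substantive step is deducing $\tau(i)=i$ from $f_{\tau(i)}=f_i$, which uses that $\hat\Phi$ lands in $F_n(Y)$ rather than $Y^n$; the rest is routine quotient bookkeeping.
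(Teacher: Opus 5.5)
Your proof is correct and follows the same route as the paper. You get $f_i\circ\delta=f_i$ from $g\circ p\circ\delta=g\circ p$, rewrite this as $f_{\tau(i)}=f_i$ using Proposition~\ref{fsigmak=fdeltak}, conclude $\tau(i)=i$ because $\hat\Phi$ lands in $F_n(Y)$, and then read off the factorization through $\tilde X/L_i$; you are also more explicit than the paper about the pairwise-distinctness step and about why the diagram commutes, which the paper leaves to its citation of Hatcher.
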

	\begin{proof}	
		Let $\delta\in G\subseteq L$, then, since $g$ is a (well-defined) map, it holds for every $\tilde x\in \tilde X$ that $f_i(\tilde x)=f_i(\delta(\tilde x))$.
		By Proposition \ref{fsigmak=fdeltak} it holds that $f_i(\delta(\tilde x))=f_{\tau(i)}(\tilde x)$, where $\tau=\Gamma(\delta)$.
		This implies that $\tau(i)=i$, hence $\tau\in L_i'$ and $\delta\in L_i$.
		Consequently $G$ is a subgroup of $L_i$ and by \cite[page 81]{hatcher2002algebraic} determining a composition of covering spaces $\tilde X\rightarrow \tilde X/G\rightarrow \tilde X/L_i\rightarrow X=\tilde X/L$.
	\end{proof}

	\subsection{A tower of lifts}\label{subsec:tower}

	In this subsection, we use the results on lifts and stabilizers from the previous subsections to construct a diagram—a “tower”—that indicates at which intermediate quotient spaces the lift factors $f_1,\ldots, f_n$ ``split off''.
	At the beginning of this section, we saw that for any $G\subseteq \mathcal Deck(\tilde X, q)$ for which $\Gamma(G)$ has more than one orbit, one can find an appropriate map $\psi$ from $\tilde X/G$ to an intermediate configuration space (see Diagram \eqref{int cov space G}).
	By Lemma \ref{int cov L fi=> G<Li} the stabilizer group $L_i'$ is the largest subgroup of $L'$ for which $f_i$ induces a map on the quotient space $\tilde X/L_i$, where $L_i=\Gamma^{-1}(L_i')$ and $i\in \{1,\ldots,n\}$.
	Note that if $L_i'$ is normal in $L'$, then $L_i'$ fixes all integers in the orbit $\mathbb O_i$, i.e. $L_i'=L'_{\mathbb O_i}$.
	More generally $L'_{\mathbb O_i}=\bigcup_{j\in \mathbb O_i}L'_j$, hence for any $j\in \mathbb O_i$ it holds that $f_j$ induces a map on the quotient space $\tilde X/L_{\mathbb O_i}$.
	
	In this subsection we build a tower based on the intermediate covering spaces induced by the stabilizer groups of whole orbits and their intersections, illustrating the largest subgroup $G$ of $\mathcal Deck(\tilde X,q)$, inducing an intermediate covering space $\tilde X/G$, where all lift factors of an orbit "split off".
	Recall that we have $r$ orbits $\mathbb O_{i_1},\ldots, \mathbb O_{i_r}$, where $\mathbb I_0=\{i_1,\ldots,i_r\}$ is the set consisting of the smallest integer of each orbit.
	Recall also that $m_s=|\mathbb O_{i_s}|$ denotes the size of each orbit, for any $s\in \{1,\ldots,r\}$ (see Notation \ref{notation action Li}).
	
	To illustrate the process carried out in this subsection, we consider $\mathbb O_{1}$, the orbit of the integer $1$.
	Note that
	$L'_{\mathbb O_{1}}=\{\tau\in L' : \tau(j)=j \text{ for all } j\in \mathbb O_{1}\}$
	fixes every element of the orbit, hence, $L'_{\mathbb O_{1}} \subseteq \{1\}\times\{2\}\times\ldots\times \{m_1\}\times \mathcal S_{i_2}\times \cdots \times \mathcal S_{i_r}$.
	Considering the covering space map $q_{\mathbb O_{1}}:\tilde X/L_{\mathbb O_1}\rightarrow X$ given in Notation \ref{notation action Li}, there is a $\mathcal P$-split of $\phi\circ q_{\mathbb O_{1}}$ (and of $\psi\circ q_{\mathbb O_{1}}$), where $\mathcal P$ is the partition $\{\{1\},\ldots,\{m_1\},\mathbb O_{i_2},\ldots,\mathbb O_{i_r}\}$.
	In fact we can describe this map exactly, using the ${\mathcal P}_{\mathbb I_0}$-split $\psi=(\psi_1,\ldots,\psi_r):X\rightarrow D_{\mathbb I_0}(Y)$ given in Proposition \ref{Phi P-splits} and the maps $\bar f_1,\ldots,\bar f_{m_1}:\tilde X/L_{\mathbb O_1}\rightarrow Y$ induced on the quotient space by $f_1,\ldots,f_{m_1}$, respectively (see Notation \ref{notation action Li}).
	We obtain the following commutative diagram;
			\begin{equation}\label{int cov space L_O1}
				\begin{tikzcd}[column sep=120]
					\tilde X\arrow{r}{\hat \Phi=(f_1,\ldots,f_n)}\arrow{d} & F_n(Y)\arrow{d}\arrow{d}{\pi}\\
					\tilde X/L_{\mathbb O_1} \arrow[dashed]{r}{(\bar f_1,\ldots,\bar f_{m_1},\psi_{2}\circ  q_{\mathbb O_{1}},\ldots, \psi_r \circ  q_{\mathbb O_{1}})}\arrow{d}[swap]{q_{\mathbb O_{1}}} & D_{1,\ldots,1,m_2,\ldots,m_r}(Y)\arrow{d}{\pi}\\
					X \arrow[swap]{rd}{\Phi} \arrow{r}{\psi=(\psi_1,\ldots,\psi_r)} &D_{m_1,\ldots,m_r}(Y)\arrow{d}{\pi} \\
					& D_n(Y).
				\end{tikzcd}
			\end{equation}
	We can carry out an analogous construction for the stabilizer of two orbits, for example those corresponding to $i_1=1$ and $i_2$, namely
	$	L'_{\mathbb O_{\{i_1,i_2\}}}= L'_{\mathbb O_1\cup \mathbb O_{i_2}}= L'_{\mathbb O_1}\cap L'_{\mathbb O_{i_2}}$.
	More generally, for any subset $\mathbb I\subseteq \mathbb I_0=\{i_1,\ldots,i_r\}$, we consider the stabilizer group $L'_{\mathbb O_{\mathbb I}}$, defined as the subgroup of $L'$ that fixes every element in each orbit $\mathbb O_{i_s}$ with $i_s\in \mathbb I$ (see Notation \ref{notation action Li}).
	
	This correspondence — assigning to each subset of orbits a stabilizer subgroup, an intermediate quotient space, and a partition of the set $\{1,\ldots,n\}$ — forms the basis of the tower construction.
	In the next paragraphs, we define, for each subset $\mathbb I\subseteq \mathbb I_0$, a map from $\tilde X/L_{\mathbb O_{\mathbb I}}$ to a corresponding intermediate configuration space.
	To do this, we first define the appropriate configuration spaces by introducing the corresponding partitions.
	
	For a subset $\mathbb I \subseteq \mathbb I_0$, we define a partition $\mathcal P_{\mathbb I}$ of $n$.
	We begin with the case $\mathbb I=\{i_s\}$.
	In this case, we set
	$\mathcal P_{\{i_s\}}=\{\{1\},\{2\},\ldots,\{i_s-1\},\mathbb O_{i_s},\{i_s+1\},\ldots,\{n\}\},	$
	or, abusively,
	$	\mathcal P_{\{i_s\}}=(1,\ldots,1,m_s,1,\ldots,1)
	=(\mathbf{1}^{m_1+\cdots+m_{s-1}},\, m_s,\, \mathbf{1}^{m_{s+1}+\cdots+m_r}),	$
	where the entry $1$ appears $m_1+\cdots+m_{s-1}$ times before $m_s$ and $m_{s+1}+\cdots+m_r$ times after $m_s$.
	\color{black}
	We define, for each ${\mathbb I}\subseteq \mathbb I_0$ a partition $\mathcal P_{\mathbb I}$ in the analogous way as above, for each $i_s\in {\mathbb I}$ we add the set $\mathbb O_{i_s}$ and in case that $i_s\notin {\mathbb I}$, add the unitary sets $\{j\}$, for all $j\in \mathbb O_{i_s}$, which means for the index notation that we write, for each $i_s\in {\mathbb I}$ the index $m_s$ and in case that $i_s\notin {\mathbb I}$,  we write $m_s$ times the number $1$.
	Hence, for $\mathbb I=\mathbb I_0\setminus\{i_s\}=\{i_1,\ldots, i_{s-1},i_{s+1},\ldots, i_r\}$, where $s\in \{1,\ldots,r\}$, then
	$\mathcal P_{\mathbb I}=\{\mathbb O_{i_1},\ldots,\mathbb O_{i_{s-1}},\{i_s\},\ldots, \{i_{s+1}-1\},\mathbb O_{i_{s+1}},\ldots,\mathbb O_{i_r}\}$, or $\mathcal P_{\mathbb I}=(m_1,\ldots,m_{s-1},\mathbf{1}^{m_s},m_{s+1},\ldots,m_r)$.
	
	Each partition $\mathcal P=\{K_1,\ldots,K_s\}$ of $\{1,\ldots,n\}$ induces the intermediate configuration space $D_{\mathcal P}(Y)=D_{K_1,\ldots,K_s}(Y)$, as explained right before Definition \ref{def:P-split}.
	For a partition ${\mathcal P}_{\mathbb I}$ induced by ${\mathbb I}\subseteq I_0$ we write either $D_{{\mathcal P}_{\mathbb I}}(Y)$ or, to simplify notation, $D_{\mathbb I}(Y)$, which means if ${\mathbb I}=\{i_s\}\subseteq \mathbb I_0$ we have that $D_{\mathbb I}(Y)$ and $D_{\{i_s\}}(Y)$ are notation variations of the space $D_{{\mathcal P}_{\mathbb I}}(Y)$.
	
	The next step is to define our "intermediate ${\mathcal P}$-splits", using, for each $s\in \{1,\ldots,r\}$ the map $\psi_s:X\rightarrow D_{m_s}(Y)$ obtained in Proposition \ref{Phi P-splits}.
	Let $s\in \{1,\ldots, r\}$ and consider the map $\psi_s\circ q:\tilde X\rightarrow D_{m_s}(Y)$.
	Then, since $\psi_s$ is a map on $X=\tilde X/L$, the quotient map $\bar \psi_s:\tilde X/L_J\rightarrow D_{m_s}(Y)$ of $\psi_s\circ q$ is well-defined, for every $J\subseteq \{1,\ldots,n\}$.
	Again we abuse notation and use the notation $\bar \psi_s:X/L_J\rightarrow D_{m_s}(Y)$ independently of the subset $J\subseteq\{1,\ldots,n\}$, the context will make it clear which quotient is the domain of the map $\bar \psi_s$.
	Note that $\bar \psi_s:X/L_J\rightarrow D_{m_s}(Y)$ is equal to $\psi_s\circ q_{J}$, where $q_J$ is as in Notation \ref{notation action Li}.
	We define, for each $\mathbb I\subseteq \mathbb I_0$, the (well-defined) map $\Psi_{\mathbb I}:X/L_{\mathbb O_{\mathbb I_0\setminus\mathbb I}}\rightarrow D_{\mathbb I}(Y)$ by substituting in $(\bar f_1,\ldots,\bar f_n)$, for every $i_s\in\mathbb I$ the $\bar f_{i_s},\bar f_{i_s+1},\ldots,\bar f_{i_s+m_s-1}$ by $\bar\psi_s$. 
	To exemplify let $\mathbb I=\{i_s\}\subseteq \mathbb I_0$, then  $\Psi_{\mathbb I}:\tilde X/L_{\mathbb O_{\mathbb I_0\setminus\mathbb I}}\rightarrow D_{\mathbb I}(Y)= D_{\{i_s\}}(Y)$ is given as
	$\Psi_{\mathbb I}=\Psi_{\{i_s\}}=\{\bar f_1,\ldots, \bar f_{i_s-1},\bar \psi_{s}, \bar f_{i_{s+1}},\ldots, \bar f_{n} \}$.
	For another example let $\mathbb I=\{i_1,\ldots, i_{s-1},i_{s+1},\ldots, i_r\}$, then $\Psi_{\mathbb I}:\tilde X/L_{\mathbb O_{\{i_s\}}}\rightarrow D_{\mathbb I}(Y)$ is given as 
	$\{\bar\psi_1,\ldots, \bar \psi_{s-1}, \bar f_{i_s},\ldots, \bar f_{i_{s+1}-1} ,\bar \psi_{s+1},\ldots, \bar \psi_r\}.$
	We now summarize the notation of the spaces and maps we defined in the last paragraphs.
	\begin{notat}\label{notation maps}
		Summarizing the notation introduced above, we have the following concepts, using the notation established in Notations \ref{notation non-split} and \ref{notation action Li};
		\begin{enumerate}
			\item
				each subset $\mathbb I\subseteq \mathbb I_0$ induces a partition of $\{1,\ldots,n\}$, denoted by ${\mathcal P}_{\mathbb I}$, where ${\mathcal P}_{\mathbb I_0}=\{\mathbb O_{i_1},\ldots,\mathbb O_{i_r}\}$ and ${\mathcal P}_{\mathbb I}$ is a refinement of ${\mathcal P}_{\mathbb I_0}$ obtained by adding, for each $i_s\in {\mathbb I}$ the set $\mathbb O_{i_s}$ and in case that $i_s\notin {\mathbb I}$ the unitary sets $\{j\}$, for all $j\in \mathbb O_{i_s}$;
			\item
				for each partition of $\{1,\ldots,n\}$, given by ${\mathcal P}=\{K_1,\ldots,K_s\}$, let $D_{\mathcal P}(Y)=D_{K_1,\ldots,K_s}(Y)=F_n(Y)/H_{\mathcal P}$, where $H_{\mathcal P}\subseteq \mathcal S_n$ is the biggest subgroup preserving the partition ${\mathcal P}$.
				For a partition ${\mathcal P}_{\mathbb I}$ induced by ${\mathbb I}\subseteq \mathbb I_0$ we write either $D_{{\mathcal P}_{\mathbb I}}(Y)$ or $D_{\mathbb I}(Y)$;
			\item
				for each $\mathbb I\subseteq \mathbb I_0$, we define the map $\Psi_{\mathbb I}:\tilde X/L_{\mathbb O_{\mathbb I_0\setminus\mathbb I}}\rightarrow D_{\mathbb I}(Y)$ by substituting in $(\bar f_1,\ldots,\bar f_n),$ 
				for every $i_s\in \mathbb I$ the part $\bar f_s,\bar f_{s+1},\ldots,\bar f_{s+m_s-1}$ by $\bar\psi_s$, where $\bar\psi_s=\psi_s\circ q_{\mathbb O_{\mathbb I_0\setminus \mathbb I}}$.
		\end{enumerate}
	\end{notat}
	
	By Proposition \ref{Phi P-splits} it holds that $\Phi=\{\psi_1,\ldots,\psi_r\}$ ${\mathcal P}_{\mathbb I_0}$-splits, i.e. $\Phi:X\rightarrow D_n(Y)$ lifts to $(\psi_1,\ldots,\psi_r):X\rightarrow D_{\mathbb I_0}(Y)$, the next proposition gives us a splitting in the intermediate configuration spaces.
	
	\begin{prop}\label{int cov L Psi=> G<Li}
		Let $\phi:X\multimap Y$ be an $n$-valued map with correspondent map $\Phi:X\rightarrow D_n(Y)$ with ${\mathcal P}_{\mathbb I_0}$-split $(\psi_1,\ldots,\psi_r):X\rightarrow D_{\mathbb I_0}(Y)$, giving rise to the whole structure described in Notations \ref{notation non-split},\ref{notation action Li} and \ref{notation maps}.
		Then $(\psi_1,\ldots,\psi_r)\circ q_{\mathbb O_{\mathbb I_0\setminus\mathbb I}}:\tilde X/L_{\mathbb O_{\mathbb I_0\setminus\mathbb I}} \rightarrow D_{\mathbb I_0}(Y)$ ${\mathcal P}_{\mathbb I}$-splits into $\Psi_{\mathbb I}:\tilde X/L_{\mathbb O_{\mathbb I_0\setminus\mathbb I}} \rightarrow D_{\mathbb I}(Y)$.
		Furthermore, for an intermediate covering space $\tilde X\rightarrow\tilde X/G\stackrel{g}\rightarrow X=\tilde X/L$, where $(\psi_1,\ldots,\psi_r)\circ g:\tilde X/G\rightarrow D_{\mathbb I_0}(Y)$ $P_{\mathbb I}$-splits, it holds that $G$ is a subgroup of $L_{\mathbb O_{\mathbb I_0\setminus\mathbb I}}$, which gives rise to the intermediate covering  $\tilde X\rightarrow \tilde X/G\rightarrow \tilde X/L_{\mathbb O_{\mathbb I_0\setminus\mathbb I}}$.
		 \begin{equation*}
		 	\begin{tikzcd}[column sep=huge]
		 		\tilde X\arrow{d}\arrow{r}{(f_1,\ldots,f_n)}\arrow[bend right=60, swap]{dd}{p_{\mathbb O_{\mathbb I_0\setminus\mathbb I}}}&F_n(Y)\arrow{dd}{\pi}\\
		 		\tilde X/G\arrow[dashed]{d}\arrow{dr}\arrow[bend right=60, crossing over]{dd}[swap]{g}		&\\
		 		\tilde X/L_{\mathbb O_{\mathbb I_0\setminus\mathbb I}} \arrow{d}{ q_{\mathbb O_{\mathbb I_0\setminus\mathbb I}}}		\arrow[dashed, swap]{r}{\Psi_{\mathbb I}}&D_{\mathbb I}(Y)\arrow{d}{\pi}\\
		 		X\arrow{r}[swap]{(\psi_1,\ldots,\psi_r)}&D_{\mathbb I_0}(Y)
		 	\end{tikzcd}
		 \end{equation*}
	\end{prop}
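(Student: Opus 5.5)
The plan has two parts: existence of the $\mathcal P_{\mathbb I}$-split via the lifting criterion (or directly by passing $\hat\Phi$ to a quotient), and maximality via the same argument as Lemma \ref{int cov L fi=> G<Li}. Write $N=L_{\mathbb O_{\mathbb I_0\setminus\mathbb I}}$ and $N'=\Gamma(N)=L'_{\mathbb O_{\mathbb I_0\setminus\mathbb I}}$.

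\emph{Existence.} The elements of $N'$ fix every $j\in\mathbb O_{i_s}$ with $i_s\notin\mathbb I$. They also preserve each orbit $\mathbb O_{i_s}$ with $i_s\in\mathbb I$, since $N'\subseteq L'\subseteq \mathcal S_{m_1}\times\cdots\times\mathcal S_{m_r}$. Hence $N'\subseteq H_{\mathcal P_{\mathbb I}}$.

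Compose $\hat\Phi$ with the projection $F_n(Y)\to D_{\mathbb I}(Y)=F_n(Y)/H_{\mathcal P_{\mathbb I}}$. By the equivariance \eqref{Phidelta=sigmaPhi}, for $\delta\in N$ we get $\pi\circ\hat\Phi\circ\delta=\pi(\Gamma(\delta)\cdot\hat\Phi)=\pi\circ\hat\Phi$. So $\pi\circ\hat\Phi$ is constant on $N$-orbits and descends to a continuous map $\tilde X/N\to D_{\mathbb I}(Y)$; continuity follows from the quotient topology, as $p_{\mathbb O_{\mathbb I_0\setminus\mathbb I}}$ is a covering, hence open.

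I then identify this map with $\Psi_{\mathbb I}$ coordinatewise. For $j$ in an orbit $\mathbb O_{i_s}$ with $i_s\notin\mathbb I$, the coordinate is $f_j$, which descends to $\bar f_j$ because $N\subseteq L_j$. For $i_s\in\mathbb I$, the block is the unordered set $\{f_{i_s},\dots,f_{i_s+m_s-1}\}$. By Proposition \ref{Phi P-splits} and commutativity of \eqref{eq:classiclift_nonsplit} together with the splitting, this equals $\psi_s\circ q$, which descends to $\bar\psi_s=\psi_s\circ q_{\mathbb O_{\mathbb I_0\setminus\mathbb I}}$. I expect this to be the delicate point: one must check that the $\mathcal P_{\mathbb I_0}$-split $\psi$ really satisfies $\psi\circ q=\pi\circ\hat\Phi$, and not merely $\pi\circ\psi\circ q=\Phi\circ q$. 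If the lift $\psi$ given by Proposition \ref{Phi P-splits} is not already this one, I will replace it by the lift chosen through the basepoint $\pi(\hat\Phi(\tilde x_0))$, using uniqueness of lifts. Once this holds, commutativity of both squares is immediate by projecting further to $D_{\mathbb I_0}(Y)$.

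\emph{Maximality.} Suppose $\tilde X/G$ is intermediate and $(\psi_1,\dots,\psi_r)\circ g$ admits a $\mathcal P_{\mathbb I}$-split $\chi$ that is compatible with $\hat\Phi$, meaning $\chi\circ p=\pi\circ\hat\Phi$ as in Definition \ref{def:P-split}. Then for every $i_s\notin\mathbb I$ and $j\in\mathbb O_{i_s}$, the singleton block $\{j\}$ of $\mathcal P_{\mathbb I}$ shows that $f_j$ factors as $g_j\circ p$ for a map $g_j:\tilde X/G\to Y$, the $j$-th coordinate of $\chi$. Lemma \ref{int cov L fi=> G<Li} then gives $G\subseteq L_j$ for every such $j$. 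Therefore $G\subseteq\bigcap_j L_j=L_{\mathbb O_{\mathbb I_0\setminus\mathbb I}}$, and \cite[page 81]{hatcher2002algebraic} yields the covering factorization $\tilde X\to\tilde X/G\to\tilde X/L_{\mathbb O_{\mathbb I_0\setminus\mathbb I}}$.

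The only subtlety in this part is to interpret the hypothesis ``$P_{\mathbb I}$-splits'' as including commutativity with $(f_1,\dots,f_n)$, as in the diagram of Definition \ref{def:P-split}. That compatibility is exactly what is needed to read off $f_j=g_j\circ p$.
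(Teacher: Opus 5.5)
Your proposal is correct and follows the same two-step route as the paper. First, you realize $\Psi_{\mathbb I}$ as the descent of $\pi\circ\hat\Phi$ to $\tilde X/L_{\mathbb O_{\mathbb I_0\setminus\mathbb I}}$; you justify this via the equivariance \eqref{Phidelta=sigmaPhi} and $L'_{\mathbb O_{\mathbb I_0\setminus\mathbb I}}\subseteq H_{\mathcal P_{\mathbb I}}$, whereas the paper only asserts it. Second, you obtain maximality by reading off the singleton coordinates and applying Lemma \ref{int cov L fi=> G<Li}, and your choice to apply the lemma to the coordinates of the split defined on $\tilde X/G$ (rather than to $pr_k\circ\Psi_{\mathbb I}$, as the paper's text literally writes) is the correct reading of that argument.
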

	\begin{proof}
		Consider the map $\Psi_{\mathbb I}:\tilde X/L_{\mathbb O_{\mathbb I_0\setminus\mathbb I}}\rightarrow D_{\mathbb I}(Y)$ defined in Notation \ref{notation maps} item \textit{4.}
		It holds that $\Psi_{\mathbb I}$ is a lift of $(\psi_1,\ldots,\psi_r)\circ q_{\mathbb O_{\mathbb I_0\setminus\mathbb I}}$ and $\Psi_{\mathbb I}\circ p_{\mathbb O_{\mathbb I_0\setminus\mathbb I}}=\pi\circ (f_1,\ldots,f_n)$, hence $(\psi_1,\ldots,\psi_r)\circ q_{\mathbb O_{\mathbb I_0\setminus\mathbb I}}:\tilde X/L_{\mathbb O_{\mathbb I_0\setminus\mathbb I}} \rightarrow D_{\mathbb I_0}(Y)$ ${\mathcal P}_{\mathbb I}$-splits.

		For the second statement let  $i_s\in \mathbb I_0\setminus\mathbb I$ and $k\in \mathbb O_{i_s}$.
		Then the $\{i_s\}$ appears as a unitary set in ${\mathcal P}_{\mathbb I}$ and we can speak about the $k$-th coordinate of $D_{\mathbb I}(Y)$ and the projection to the "$k$-th component" $pr_{k}: D_{\mathbb I}(Y) \rightarrow Y$.
		Then the composition of $pr_k$ with $\Psi_{\mathbb I}$ satisfies the conditions of Lemma \ref{int cov L fi=> G<Li} and it follows that $G\subseteq L_k$.
		Since this holds for every $i_s\in  \mathbb I_0\setminus\mathbb I$ and every $k\in \mathbb O_{i_s}$, this implies that $G\subseteq L_{\mathbb O_{ \mathbb I_0\setminus\mathbb I}}$ and by \cite[page 81]{hatcher2002algebraic} this determines a composition of covering spaces $\tilde X\rightarrow \tilde X/G\rightarrow \tilde X/L_{\mathbb O_{ \mathbb I_0\setminus\mathbb I}}\rightarrow X$.	
	\end{proof}
	
	Proposition~\ref{int cov L Psi=> G<Li} induces the following (commutative) diagrams for each $n$-valued non-split map $\phi:X\multimap Y$ and ${\mathcal P}_{\mathbb I_0}$-splitting $(\psi_1,\ldots,\psi_r):X\rightarrow D_{{\mathbb I_0}}(Y)$ of the correspondent map $\Phi:X\rightarrow D_n(Y)$, where $\mathbb I_0=\{i_1,\ldots,i_r\}\subseteq\{1,\ldots,n\}$. Note that these "towers" are different for each $n$-valued non-split map $\phi:X\multimap Y$ and are therefore associated to the map $\phi$;
	
	\begin{equation}\label{D-tower}
		\begin{tikzcd}[column sep=small]
			&&\tilde X\arrow{ld}\arrow{dr}\arrow[d]&& \\
			%
			&\tilde X/L_{\mathbb O_{\{i_1,\ldots,i_{r-1}\}}}\arrow[crossing over]{ld}\arrow{d}\arrow[r,white,"\cdots"{marking, black}]
			&\tilde X/L_{\mathbb O_{\{i_1,i_3\ldots,i_{r}\}}}\arrow{drr}
			& \tilde X/L_{\mathbb O_{\{i_2,\ldots,i_{r}\}}}\arrow[d,crossing over]\arrow{dr}\arrow[crossing over]{lld}&\\
			\tilde X/L_{\mathbb O_{\{i_1,\ldots,i_{r-2}\}}}\arrow[dd,"\cdots"{marking,fill=white}]\arrow[r,white,"\cdots"{marking, black}]
			\arrow[ddrr,"\cdots"{marking,fill=white}]
			&\tilde X/L_{\mathbb O_{\{i_2,\ldots,i_{r-1}\}}}\arrow[rr,white,"\cdots"{marking, black}]
			\arrow[ddr,"\cdots"{marking,fill=white}]
			&
			& \tilde X/L_{\mathbb O_{\{i_2,i_4,\ldots,i_{r}\}}}
			\arrow[ddr,"\cdots"{marking,fill=white}]
			&\tilde X/L_{\mathbb O_{\{i_3,\ldots,i_{r}\}}}\arrow[dd,"\cdots"{marking,fill=white}]\\
			&&&&\\
			\tilde X/L_{\mathbb O_{\{i_1,i_2\}}}\arrow{dr}\arrow{drr}\arrow[r,white,"\cdots"{marking, black}]
			&\tilde X/L_{\mathbb O_{\{i_1,i_r\}}} \arrow[crossing over]{d}\arrow{drr}
			&\tilde X/L_{\mathbb O_{\{i_2,i_3\}}}\arrow[crossing over]{d}
			\arrow[rr,white,"\cdots"{marking, black}]
			& 
			&\tilde X/L_{\mathbb O_{\{i_{r-1},i_r\}}}\arrow[crossing over]{ld}\\
			&\tilde X/L_{\mathbb O_{i_1} }\arrow{dr}
			& \tilde X/L_{\mathbb O_{i_2}} \arrow{d}\arrow[r,white,"\cdots"{marking, black}]
			& \tilde X/L_{\mathbb O_{i_r}}\arrow{ld}&\\
			&&X&&
		\end{tikzcd}
	\end{equation}

	Each of these spaces can be mapped to a configuration space, such that the following diagram commutes (see Notation \ref{notation maps} for the definition of the maps);
	\begin{equation}\label{D-tower-D}
		\begin{tikzcd}
			&\tilde X\arrow{dr}\arrow{ld}\arrow{r}{(f_1,\ldots,f_n)}&F_n(Y)\arrow{dr}\arrow[crossing over]{ld}& \\
			\tilde X/L_{\mathbb O_{\{i_1,\ldots,i_{r-1}\}}} \arrow["\cdots"{marking,fill=white}]{dd}\arrow[swap]{r}{\Psi_{\{i_r\}}}& D_{\{i_r\}}(Y)\arrow["\cdots"{marking,fill=white}]{dd}\arrow[r,white,"\cdots"{marking, black}] & \tilde X/L_{\mathbb O_{\{i_2,\ldots,i_{r}\}}}\arrow["\cdots"{marking,fill=white}]{dd}\arrow[swap]{r}{\Psi_{\{i_1\}}}&D_{\{i_1\}}(Y)\arrow["\cdots"{marking,fill=white}]{dd}\\
			&&&\\
			\tilde X/L_{\mathbb O_{i_1}} \arrow{dr} \arrow{r}{\Psi_{\{i_2,\ldots,i_{r}\}}}& D_{\{i_2,\ldots,i_{r}\}}(Y)\arrow{dr} \arrow[white,"\cdots"{marking, black}]{r} & \tilde X/L_{\mathbb O_{i_r}}\arrow[crossing over]{ld}\arrow{r}{\Psi_{\{i_1,\ldots,i_{r-1}\}}}& D_{\{i_1,\ldots,i_{r-1}\}}(Y)\arrow{ld}\\
			&X\arrow[swap]{r}{(\psi_1,\ldots,\psi_r)}&D_{\{i_1,\ldots,i_{r}\}}(Y)=D_{\mathbb I_0}(Y)&
		\end{tikzcd}
	\end{equation}
	In Diagram \eqref{D-tower-D} it is possible to see, when the orbit of a lift factor $f_s$ "splits off" and "appears in the map".
	Note that for two different subsets $\mathbb I_1,\mathbb I_2\subseteq I_0$ of equal order with $L_{\mathbb O_{\mathbb I_0\setminus\mathbb I_1}}=L_{\mathbb O_{\mathbb I_0\setminus\mathbb I_2}}$, then $\tilde X/L_{\mathbb O_{\mathbb I_0\setminus\mathbb I_2}}=\tilde X/L_{\mathbb O_{\mathbb I_0\setminus\mathbb I_1}}=\tilde X/L_{\mathbb O_{\mathbb I_0\setminus(\mathbb I_1\cap \mathbb I_2)}}$.
	In this case we can substitute the configuration spaces and the maps $\Psi_{\mathbb I_1}$ and $\Psi_{\mathbb I_2}$ by $\Psi_{\mathbb I_1\cap \mathbb I_2}:\tilde X/L_{\mathbb O_{\mathbb I_0\setminus\mathbb I_1}}\rightarrow D_{\mathbb I_1\cap \mathbb I_2}(Y)$ and repeat this process, until there aren't two same spaces in Diagrams \eqref{D-tower} and \eqref{D-tower-D}. 
	
	Note that the partition ${\mathcal P}_{\mathbb I_1\cap\mathbb I_2}$ induced by $\mathbb I_1\cap\mathbb I_2$ is a strict refinement of both partitions ${\mathcal P}_{\mathbb I_1}$ and ${\mathcal P}_{\mathbb I_2}$.
	Thus there exists a ${\mathcal P}_{\mathbb I_1\cap\mathbb I_2}$-split of both $(\psi_1,\ldots,\psi_r)\circ q_{\mathbb O_{\mathbb I_1}}$ and $(\psi_1,\ldots,\psi_r)\circ q_{\mathbb O_{\mathbb I_2}}$.
	In general there are more possibilities for other ${\mathcal P}$-splits, where ${\mathcal P}$ is more refined.
	\begin{rem}\label{rem:refinement}
		Observe that if $L_{\mathbb O_{\mathbb I_0\setminus\mathbb I}}\subseteq \mathcal S_{t_1}\times\cdots\times \mathcal S_{t_k}$, where $(t_1,\ldots,t_k)$ is a refinement of the partition ${\mathcal P}_{\mathbb I}$, then there exists a  $(t_1,\ldots,t_k)$-split of $\Psi_{\mathbb I}$.
	\end{rem}
	
	The following example illustrates process of building a tower for a particular case.
	\begin{ex}
		In this example we explicit an $12$-valued non-split map from the torus to the disc and study the effect of the deck-transformations on the lift factors.
		Finally we build a tower like in Diagrams \eqref{D-tower} and \eqref{D-tower-D}.
		Recall that a presentation of the braid group on $n$ strands on the disc $\mathbb D$ is given as follows;
		\begin{equation*}
			B_n= \left \langle \sigma_1,\ldots,\sigma_{n-1}; \sigma_i \sigma_{i+1} \sigma_i = \sigma_{i+1} \sigma_i \sigma_{i+1}, \sigma_i \sigma_j=\sigma_j\sigma_i \text{ whenever } |i-j|>1\right \rangle.
		\end{equation*}

		Consider, in $B_{12}(\mathbb D)$ the braids $\alpha=\sigma_3\sigma_2\sigma_1$ and $\beta=\sigma_6\sigma_5\sigma_8 \sigma_{10}$ and
		the $12$-valued map from the torus to the disc $\phi:\mathbb T \stackrel{12}   \multimap  \mathbb D$, with correspondent map $\Phi:\mathbb T\rightarrow D_{12}(\mathbb D)$, defined by $\Phi_\#$ taking $a,b\in \pi_1(\mathbb T)=\{a,b:ab=ba\}$ to the braids $\alpha $ and $\beta$ respectively.
		It holds that $\alpha$ and $\beta$ commute, therefore $\Phi_\# $ is a well defined group homomorphism and since all spaces used are $K(\pi,1)$-spaces, this defines a homotopy class of maps from the torus to the disc.
		\bigskip
		
			\begin{tikzpicture}[scale=0.75, very thick]
				\foreach \k in {2.5}{\draw (\k,6) .. controls (\k,5) and (\k-1.5,5) .. (\k-1.5,4);};			
				\foreach \k in {1}{\draw[white,line width=6pt](\k,6) .. controls (\k,5) and (\k+0.5,5) .. (\k+.5,4);
					\draw (\k,6) .. controls (\k,5) and (\k+0.5,5) .. (\k+0.5,4);};
				\foreach \k in {1.5}{\draw[white,line width=6pt](\k,6) .. controls (\k,5) and (\k+0.5,5) .. (\k+.5,4);
					\draw (\k,6) .. controls (\k,5) and (\k+.5,5) .. (\k+0.5,4);};
				\foreach \k in {2}{\draw[white,line width=6pt](\k,6) .. controls (\k,5) and (\k+0.5,5) .. (\k+.5,4);
					\draw (\k,6) .. controls (\k,5) and (\k+.5,5) .. (\k+.5,4);};		
				\foreach \k in {3}{\draw (\k,6)--(\k,4);};
				\foreach \k in {3.5}{\draw (\k,6)--(\k,4);};
				\foreach \k in {4}{\draw (\k,6)--(\k,4);};
				\foreach \k in {4.5}{\draw (\k,6)--(\k,4);};	
				\foreach \k in {5}{\draw (\k,6)--(\k,4);};
				\foreach \k in {5.5}{\draw (\k,6)--(\k,4);};
				\foreach \k in {6}{\draw (\k,6)--(\k,4);};
				\foreach \k in {6.5}{\draw (\k,6)--(\k,4);};	
				\foreach \k in {5}{\node at (4,3.5) {$\alpha=\sigma_3\sigma_2\sigma_1$};};
				
				\foreach \k in {11}{\draw (\k,6)--(\k,4);};
				\foreach \k in {11.5}{\draw (\k,6)--(\k,4);};
				\foreach \k in {12}{\draw (\k,6)--(\k,4);};
				\foreach \k in {12.5}{\draw (\k,6)--(\k,4);};					
				\foreach \k in {14}{\draw (\k,6) .. controls (\k,5) and (\k-1,5) .. (\k-1,4);};			
				\foreach \k in {13}{\draw[white,line width=6pt](\k,6) .. controls (\k,5) and (\k+0.5,5) .. (\k+.5,4);
					\draw (\k,6) .. controls (\k,5) and (\k+0.5,5) .. (\k+0.5,4);};
				\foreach \k in {13.5}{\draw[white,line width=6pt](\k,6) .. controls (\k,5) and (\k+0.5,5) .. (\k+.5,4);
					\draw (\k,6) .. controls (\k,5) and (\k+.5,5) .. (\k+0.5,4);};
				\foreach \k in {15}{\draw (\k,6) .. controls (\k,5) and (\k-.5,5) .. (\k-.5,4);};	
				\foreach \k in {14.5}{\draw[white,line width=6pt] (\k,6) .. controls (\k,5) and (\k+.5,5) .. (\k+.5,4);
					\draw (\k,6) .. controls (\k,5) and (\k+.5,5) .. (\k+.5,4);};
				\foreach \k in {16}{\draw (\k,6) .. controls (\k,5) and (\k-.5,5) .. (\k-.5,4);};	
				\foreach \k in {15.5}{\draw[white,line width=6pt] (\k,6) .. controls (\k,5) and (\k+.5,5) .. (\k+.5,4);
					\draw (\k,6) .. controls (\k,5) and (\k+.5,5) .. (\k+.5,4);};				
				\foreach \k in {16.5}{\draw (\k,6)--(\k,4);};				
				\foreach \k in {16}{\node at (13,3.5) {$\beta=\sigma_6\sigma_5\sigma_8 \sigma_{10}$};};
			\end{tikzpicture}

		\bigskip
		\noindent
		Consider $ker \theta=ker (\rho\circ\Phi_\#)$, which is equal to $\langle a^4,b^6\rangle$
		which induces a covering space $q:\tilde {\mathbb T}\rightarrow \mathbb T$ and a lift $\hat \Phi=\{f_1,\ldots,f_{12}\}:\tilde {\mathbb T}\rightarrow F_{12}(\mathbb D)$.
		Note that $L'=\langle (1\ 2\ 3\ 4), (5\ 6\ 7), (8\ 9)(10\ 11)\rangle \subseteq \mathcal S_{12}$.
		The group of deck-transformations $\mathcal Deck(\tilde {\mathbb T},q)$ is isomorphic to $L'$  through $\Gamma$ (see Notation \ref{notation non-split}) and we denote $\delta_1,\delta_2,\delta_3\in \mathcal Deck(\tilde {\mathbb T},q)$ as the deck-transformation such that $\Gamma(\delta_1)=(1\ 2\ 3\ 4)$,  $\Gamma(\delta_2)=(5\ 6\ 7)$, $\Gamma(\delta_3)=(8\ 9)(10\ 11)$.
		Then we have five different orbits in $L'$, namely $\mathbb O_1=\{1, 2, 3, 4\}$, $\mathbb O_5=\{5, 6, 7\}$, $\mathbb O_8=\{8, 9\}$, $\mathbb O_{10}=\{10, 11\}$ and $\mathbb O_{12}=\{12\}$, hence $\mathbb I_0=\{1, 5, 8, 10, 12\}$.
		Furthermore the stabilizer groups are $L'_{\mathbb O_1}=L'_1=\langle(5\ 6\ 7),(8\ 9)(10\ 11)\rangle$, $L'_{\mathbb O_5}=L'_5=\langle(1\ 2\ 3\ 4),(8\ 9)(10 \ 11)\rangle $ and $L'_{\mathbb O_8}=L'_8=L'_{\mathbb O_{10}}=L'_{10}=\langle (1\ 2\ 3\ 4),(5\ 6\ 7)\rangle$ and $L'_{12}=L'$.
		Also $L'_{\{1,5\}}=L'_{\mathbb O_{\{1,5\}}}=\langle (8\ 9)(10\ 11) \rangle$, $L'_{\{1,8\}}=L'_{\mathbb O_{\{1,8\}}}=L'_{\mathbb O_{\{1,10\}}}=\langle (5\ 6\ 7) \rangle$ and $L'_{\{5,8\}}=L'_{\mathbb O_{\{5,8\}}}=L'_{\mathbb O_{\{5,10\}}}=\langle (1\ 2\ 3\ 4) \rangle$.
		By Proposition \ref{Phi P-splits} there exists a ${\mathcal P}_{\mathbb I_0}$-split $\psi=(\psi_1,\psi_2,\psi_3,\psi_4, \psi_5): \mathbb T\rightarrow D_{4,3,2,2,1}(\mathbb D)$, where 
		$\psi_5=f_{12}$ and $\psi_1\circ q,\psi_2\circ q,\psi_3\circ q,\psi_4\circ q$ lift to $(f_1,f_2,f_3,f_4)$, $(f_5,f_6,f_7)$, $(f_8,f_9)$ and $(f_{10}, f_{11})$ respectively.
		Furthermore Proposition \ref{fsigmak=fdeltak} implies the following equations;
		\begin{multicols}{3}
			\begin{enumerate}[a)]
				\item $f_2=f_1\circ \delta_1$;
				\item $f_3=f_1\circ \delta_1^2$;
				\item $f_4=f_2\circ \delta_1^{3}$;	
				\item $f_1\circ \delta_2=f_1\circ \delta_3=f_1$;\\
				\item $f_6=f_5\circ \delta_2$;
				\item $f_7=f_5\circ \delta_2^{2}$;
				\item $f_9=f_8\circ \delta_3$;
				\item $f_{12}=f_{11}\circ \delta_3$;\\
				\item $f_5\circ \delta_1=f_5\circ \delta_3=f_5$;
				\item $f_8\circ \delta_1=f_8\circ \delta_2=f_8$;
				\item $f_{10}\circ \delta_1=f_{10}\circ \delta_2=f_{10}$;
				\item $f_{12}\circ \delta_1=f_{12}\circ \delta_2=f_{12}\circ \delta_3=f_{12}.$
			\end{enumerate}
		\end{multicols}

		The following diagram shows the tower of Diagram \eqref{D-tower-D}, excluding for readability's sake, the arrows between the appearing intermediate configuration spaces, which are straightforward to add, for example we have $F_{12}(\mathbb D)\rightarrow D_{\mathbf{1}^7,2,2,1}(\mathbb D)\rightarrow D_{\mathbf{1}^4,3,2,2,1}(\mathbb D) \rightarrow D_{4,3,2,2,1}(\mathbb D)$.
		Here we write $D_{\mathbf{1}^7,2,2,1}(\mathbb D)$ instead of $D_{1,1,1,1,1,1,1,2,2,1}(\mathbb D)$, and use analogous notation for the other intermediate configuration spaces, following the convention for partitions explained in the paragraphs preceding Notation~\ref{notation maps}.
		 
		\begin{equation*}
		\begin{tikzcd}[column sep=4pt]
			&\tilde {\mathbb T}\arrow{ldd}\arrow{dd}\arrow{rdd}\arrow{r}{(f_1,\ldots,f_{12})}
			&F_{12}(\mathbb D) \\
			D_{\mathbf{1}^7,2,2,1}(\mathbb D)&& D_{4,\mathbf{1}^8}(\mathbb D)\\
			\tilde {\mathbb T}/\Gamma^{-1}({\langle (8\ 9)(10\ 11)\rangle}) \arrow{ddd}\arrow{dddr}\arrow{u}{(\bar f_1,\ldots, \bar f_7,\bar \psi_3,\bar \psi_4,\bar f_{12})}
			&\tilde {\mathbb T}/\Gamma^{-1}({\langle (5\ 6\ 7)\rangle})\arrow{dddr}
			\arrow[d,"{(\bar f_1,\ldots, \bar f_4,\bar \psi_2,\bar f_8,\ldots,\bar f_{12})}" {fill=white, inner sep=1pt}]\arrow[crossing over]{lddd}
			&	\tilde {\mathbb T}/\Gamma^{-1}({\langle (1\ 2\ 3\ 4)\rangle})\arrow[crossing over]{lddd}\arrow[swap]{u}{(\bar \psi_1,\bar f_5,\ldots,\bar f_{12})}\arrow{ddd}
			\\
			&D_{\mathbf{1}^4,3,\mathbf{1}^5}(\mathbb D)&\\
			&D_{4,\mathbf{1}^3,2,2,1}(\mathbb D)&\\
			\tilde {\mathbb T}/\Gamma^{-1}({\langle (5\ 6\ 7),(8\ 9)(10\ 11)\rangle})
			\arrow{d}[swap]{(\bar f_1,\ldots, \bar f_4,\bar \psi_2,\bar \psi_3,\bar \psi_4,\bar f_{12})}
			\arrow{ddr}
			&\tilde {\mathbb T}/\Gamma^{-1}({\langle (1\ 2\ 3\ 4),(8\ 9)(10\ 11)\rangle})
			\arrow[u, "{(\bar \psi_1,\bar f_5,\bar f_6,\bar f_7,\bar \psi_3,\bar \psi_4,\bar f_{12})}" {fill=white, inner sep=2pt}]
			%
			\arrow{dd}
			& 	\tilde {\mathbb T}/\Gamma^{-1}({\langle (1\ 2\ 3\ 4),(5\ 6\ 7)\rangle})\arrow{d}{(\bar \psi_1,\bar \psi_2,\bar f_8,\ldots,\bar f_{12})}\arrow{ldd}
			\\
			D_{\mathbf{1}^4,3,2,2,1}(\mathbb D)&&D_{4,3,\mathbf{1}^5}(\mathbb D)\\
			& \mathbb T\arrow[swap]{r}{(\psi_1,\psi_2,\psi_3,\psi_4,\bar f_{12})}&D_{4,3,2,2,1}(\mathbb D)
		\end{tikzcd}
	\end{equation*}
	\end{ex}
	\color{black}

	\subsection{$L'_{i}$ - Stabilizer group of an integer}\label{subsection:Li}
		In this subsection we focus on stabilizer groups $L'_{i_s}$, for ${i_s}\in \mathbb I_0$, that do not stabilize the whole orbit $\mathbb O_{i_s}$.
		If $L'_{i_s}$ is normal, then $L'_{\mathbb O_{i_s}}=L'_j$, for every $j\in \mathbb O_{i_s}$, which supports the choice of the spaces in Diagrams \eqref{D-tower} and $\eqref{D-tower-D}$.	
		In the case that $L'_{i_s}$ is not normal it holds that $L'_{i_s}$ is different to $L'_{\mathbb O_{i_s}}$
		and there are possibly more than two different subgroups $L'_j$ of $L'_{\mathbb O_{i_s}}$ with $j\in \mathbb O_{i_s}.$
		Just before Notation \ref{notation action Li} we saw, that for each $j\in \mathbb O_{i_s}$, with ${i_s}\in \mathbb I$, it holds that $L'_j$ is a conjugate of $L'_{i_s}$ (and consequently isomorphic), in the following lemma we expand this statement. 
	
		\begin{lem}\label{LtauJ=tauLJtau-1}
			Let $L' \subseteq \mathcal S_n$ and let $\mathbb O_{i_s}$ 
			and $\mathbb I_0$ be as in Notation \ref{notation action Li}.
			Let furthermore ${i_s}\in \mathbb I_0$, $J\subseteq \mathbb O_{i_s}$ and $\tau \in L'$, then $L'_{\tau(J)}=\tau L'_J\tau^{-1}$, where $\tau(J)=\{\tau(j):j\in J\}$.
		\end{lem}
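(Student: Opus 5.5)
The identity $L'_{\tau(J)}=\tau L'_J\tau^{-1}$ follows from a double inclusion argument. It uses only the definition of the pointwise stabilizer $L'_J=\{\sigma\in L':\sigma(j)=j \text{ for all } j\in J\}$ from Notation \ref{notation action Li} and the fact that $L'$ is a group. The hypotheses $J\subseteq \mathbb O_{i_s}$ and $i_s\in\mathbb I_0$ play no real role; they only ensure that $\tau(J)\subseteq \mathbb O_{i_s}$ again. This holds because every element of $L'$ maps an orbit into itself. So we stay within subsets of a single orbit, as in the surrounding discussion.

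\emph{First inclusion.} Let $\sigma\in L'_J$ and consider $\tau\sigma\tau^{-1}$, which lies in $L'$ since $\tau,\sigma\in L'$. For any element of $\tau(J)$, written as $\tau(j)$ with $j\in J$, we compute $\tau\sigma\tau^{-1}(\tau(j))=\tau(\sigma(j))=\tau(j)$. Hence $\tau\sigma\tau^{-1}\in L'_{\tau(J)}$, which gives $\tau L'_J\tau^{-1}\subseteq L'_{\tau(J)}$.

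\emph{Reverse inclusion.} Apply the first inclusion with $\tau^{-1}\in L'$ and the set $\tau(J)$ in place of $\tau$ and $J$. Since $\tau^{-1}(\tau(J))=J$, this yields $\tau^{-1}L'_{\tau(J)}\tau\subseteq L'_J$. Conjugating both sides by $\tau$ gives $L'_{\tau(J)}\subseteq \tau L'_J\tau^{-1}$.

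The only point needing any care is the bookkeeping of composition order. The paper's action on $F_n(Y)$ is written via $\tau\cdot(y_1,\ldots,y_n)=(y_{\tau(1)},\ldots,y_{\tau(n)})$, but the lemma concerns permutations acting on indices, so $\tau(j)$ means ordinary evaluation and composition is ordinary composition of functions. With that convention fixed, there is no real obstacle. As a sanity check, the case $J=\{i\}$ recovers the statement before Notation \ref{notation action Li} that $L'_j=\tau L'_i\tau^{-1}$ whenever $\tau(i)=j$.
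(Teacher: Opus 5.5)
Your proof is correct and is the same double-inclusion argument as the paper, with an identical first inclusion. The only cosmetic difference is in the reverse inclusion. You obtain it by applying the first inclusion to $\tau^{-1}$ and $\tau(J)$, whereas the paper checks directly that $\tau^{-1}\sigma\tau\in L'_J$ for $\sigma\in L'_{\tau(J)}$ and then writes $\sigma=\tau(\tau^{-1}\sigma\tau)\tau^{-1}$.
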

		\begin{proof}
			Let $\sigma\in L'_J$ and $k\in \tau(J)$, then there exists $j\in J$ such that $\tau(j)=k$ and it holds that $\sigma(j)=j$.
			Hence $(\tau \circ \sigma\circ\tau^{-1})(k)=k$ and consequently $\tau\sigma\tau^{-1}\in L'_{k}$.
			Since this holds for every $k\in \tau(J)$ it follows that $\tau L'_J\tau^{-1}\subseteq L'_{\tau(J)}$.
			For the other inclusion let $\sigma\in L'_{\tau(J)}$, then $\sigma(\tau(j))=\tau(j)$ for every $j\in J$, therefore $\tau^{-1} \circ \sigma\circ\tau\in L'_J$.
			Since $\sigma=\tau\circ(\tau^{-1} \circ \sigma\circ\tau)\circ \tau^{-1}$, it follows that $\sigma\in \tau L'_j\tau^{-1}$, for every $j\in J$ and consequently $L'_{\tau(J)}\subseteq\tau L'_J\tau^{-1}$.		
		\end{proof}
	
		The following lemma explores the natural connection between the different stabilizer groups of integers in the orbit $\mathbb O_{i_s}$ and 
		the quotient group $L'/N_{L'}(L'_{i_s})$, where $N_{L'}(L'_{i_s})$ is the normalizer of $L_{i_s}'$ in $L'$, that is $N_{L'}(L'_{i_s})=\{\tau\in L':L'_{\tau(i_s)}=L'_{i_s}\}$, using Lemma \ref{LtauJ=tauLJtau-1}.
	
		\begin{lem}\label{Oi=UtauJ}
			Let $\Phi:X\rightarrow D_n(Y)$ be an $n$-valued map inducing the structure described in Notations \ref{notation non-split} and \ref{notation action Li} and let $i_s\in \mathbb I_0$.
			Then there is a bijection between the quotient group $L'/N_{L'}(L'_{i_s})$ and $\{L'_j: j\in \mathbb O_{i_s}\}$, the stabilizer groups of all elements in the orbit of $i_s$.
		\end{lem}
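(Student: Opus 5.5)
Note first that $L'/N_{L'}(L'_{i_s})$ is in general only a set of left cosets, not a group, unless the normalizer is normal in $L'$. So I will prove the bijection at the level of the coset set $L'/N$, with $N=N_{L'}(L'_{i_s})$. The plan is to use the conjugation action of $L'$ on its subgroups. Lemma \ref{LtauJ=tauLJtau-1} with $J=\{i_s\}$ gives $L'_{\tau(i_s)}=\tau L'_{i_s}\tau^{-1}$ for every $\tau\in L'$.

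The map will be
\[
\Theta: L'/N\to \{L'_j: j\in\mathbb O_{i_s}\},\qquad \Theta(\tau N)=\tau L'_{i_s}\tau^{-1}=L'_{\tau(i_s)}.
\]

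\emph{Well-definedness.} Suppose $\tau N=\sigma N$, so $\sigma=\tau\nu$ with $\nu\in N$. Since $\nu$ normalizes $L'_{i_s}$, we get $\sigma L'_{i_s}\sigma^{-1}=\tau\nu L'_{i_s}\nu^{-1}\tau^{-1}=\tau L'_{i_s}\tau^{-1}$.

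\emph{Surjectivity.} Let $j\in\mathbb O_{i_s}$. By the definition of the orbit there is $\tau\in L'$ with $\tau(i_s)=j$. Then $\Theta(\tau N)=L'_{\tau(i_s)}=L'_j$ by Lemma \ref{LtauJ=tauLJtau-1}.

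\emph{Injectivity.} Suppose $\tau L'_{i_s}\tau^{-1}=\sigma L'_{i_s}\sigma^{-1}$. Then $\sigma^{-1}\tau$ conjugates $L'_{i_s}$ to itself, so $\sigma^{-1}\tau\in N$ and $\tau N=\sigma N$.

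The one point needing care is the definition of the normalizer. The paper writes $N_{L'}(L'_{i_s})=\{\tau\in L' : L'_{\tau(i_s)}=L'_{i_s}\}$. By Lemma \ref{LtauJ=tauLJtau-1} this set equals the usual normalizer $\{\tau\in L' : \tau L'_{i_s}\tau^{-1}=L'_{i_s}\}$, so I will establish this identification first. After that, both the well-definedness and the injectivity steps go through cleanly.

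The target is understood as the set of distinct subgroups $L'_j$ with $j\in\mathbb O_{i_s}$, so several $j$ may give the same subgroup. There is no real obstacle beyond this bookkeeping. Equivalently, the argument is the orbit–stabilizer theorem for the conjugation action of $L'$ on the set of its subgroups: the orbit of $L'_{i_s}$ is exactly $\{L'_j : j\in\mathbb O_{i_s}\}$, and its stabilizer is $N$.
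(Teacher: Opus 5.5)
Your proposal is correct and follows essentially the same route as the paper. Both use the map $\tau N\mapsto L'_{\tau(i_s)}=\tau L'_{i_s}\tau^{-1}$, with well-definedness and injectivity coming from the normalizer condition via Lemma~\ref{LtauJ=tauLJtau-1} (the paper packages these two steps as a single ``if and only if'') and surjectivity coming from the definition of the orbit. Your remark that $L'/N_{L'}(L'_{i_s})$ is in general only a set of cosets rather than a group is correct and slightly sharpens the paper's wording.
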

		
			\begin{proof}
			Define a map taking a class $[\tau]\in L'/N_{L'}(L'_{i_s})$ to the stabilizer group $L'_{\tau(i_s)}$.
			This map is well-defined and injective, because two permutations $\tau_1,\tau_2\in L'$ are in the same  class, if and only if, $\tau_1^{-1}\circ\tau_2\in N_{L'}(L'_{i_s})$, which, using Lemma \ref{LtauJ=tauLJtau-1}, is equivalent to $L'_{\tau_1(i_s)}=L'_{\tau_2(i_s)}$.
			%
			Furthermore this map is surjective, since $\mathbb O_{i_s}=\{j\in \{1,\ldots, n\}:\exists \tau\in L' \text{ st. } \tau(i_s)=j\}$ is the orbit of $i_s$.
		\end{proof}
		Since $L'$ is finite, so is $L'/N_{L'}(L'_{i_s})$ and we denote the different classes by $[\tau_1],\ldots,[\tau_{x_s}]$, where we suppose without loss of generality that $[\tau_1]=[id]$.
		Lemma \ref{Oi=UtauJ} implies that $L'_{\tau_1(i_s)},\ldots, L'_{\tau_{x_s}(i_s)}$ are all the different stabilizer groups of elements of $\mathbb O_{i_s}$. 
		The normalizer $N_{L'}(L'_{i_s})$ also induces a partition of $\mathbb O_{i_s}$.
		For that purpose let $\mathbb J_s=\{\tau(i_s): \tau\in N_{L'}(L'_{i_s})\}$, i.e. $\mathbb J_s$ is the orbit of $i_s$ by $N_{L'}(L'_{i_s})$.
		Since $\mathbb J_s\subseteq \mathbb O_{i_s}$ is a finite set, we can write $\mathbb J_s=\{j_1,\ldots, j_{k_s}\}$, where $k_s=|\mathbb J_s|$.
		Note that $\tau_1(\mathbb J_s),\ldots,\tau_{x_s}(\mathbb J_s)$ are disjoint sets of the same order $k_s$, with $\mathbb O_{i_s}=\dot{\bigcup}_{t=1}^{x_s}\tau_t(\mathbb J_s)$.
		Furthermore Lemma \ref{LtauJ=tauLJtau-1} implies, for each $t\in\{1,\ldots,x_s\}$, that $L'_{\tau_t(\mathbb J_s)}=L'_{\tau_t(i_s)}=\tau_t L'_{i_s}\tau_t^{-1}$.
		
		Observe that in case that $L'_{i_s}$ is not normal we can expand the Diagrams \eqref{D-tower} and \eqref{D-tower-D} between $\tilde X/L_{\mathbb O_{i_s}}$ and $X$.
		The next step is to define a partition, an intermediate configuration space and a map, to be able to extend Diagram \eqref{D-tower-D}.
		Consider the partition ${\mathcal P}_{(i_s;\tau_1)}$ of $\{1,\ldots, n\}$, consisting of the different orbits by $L'$, but instead of the orbit $\mathbb O_{i_s}$ the partition contains the set $\mathbb O_{i_s}\setminus \mathbb J_s$ and the unitary sets of every element in $\mathbb J_s$, i.e. the partition is given by
		${\mathcal P}_{(i_s;\tau_1)}=\{\mathbb O_{i_1},\ldots, \mathbb O_{i_{s-1}}, \{j_1\},\ldots, \{j_{k_s}\},\mathbb O_{i_s}\setminus \mathbb J_s,\mathbb O_{i_{s+1}},\ldots, \mathbb O_{i_{r}}\}.$
		This partition induces the intermediate configuration space $D_{{\mathcal P}_{(i_s;\tau_1)}}(Y)$, which we also denote by $D_{(i_s;\tau_1)}(Y)$ (see the paragraph proceeding Definition \ref{def:P-split} to recall the denotation $D_{{\mathcal P}_{(i_s;\tau_1)}}(Y)$).
		We define a map from $\tilde X/L'_{i_s}$ to $D_{{\mathcal P}_{(i_s;\tau_1)}}(Y)$ being $\Psi_{(i_s;\tau_1)}=(\bar\psi_1,\ldots,\bar\psi_{s-1},\bar f_{j_1}\,\ldots,\bar f_{j_{k_s}},\{\bar f_j:j\in \mathbb O_{i_s}\setminus \mathbb J_s\},\bar\psi_{s+1},\ldots, \bar\psi_r)$.
		Analogously we can define for any $t\in \{1,\ldots,x_s\}$ the partition ${\mathcal P}_{(i_s;\tau_1)}=\{\mathbb O_{i_1},\ldots, \mathbb O_{i_{s-1}}, \{\tau_t(j_1)\},\ldots, \{\tau_t(j_{k_s})\},\mathbb O_{i_s}\setminus \mathbb \tau_t(J_s),\mathbb O_{i_{s+1}},\ldots, \mathbb O_{i_{r}}\}$ and the equivalent intermediate configuration space $D_{(i_s;\tau_t)}(Y)=D_{{\mathcal P}_{(i_s;\tau_t)}}(Y)$.
		Furthermore we define a map $(\bar\psi_1,\ldots,\bar\psi_{s-1},\bar f_{\tau_t(j_1)}\,\ldots,\bar f_{\tau_t(j_{k_s})},\{\bar f_j:j\in \mathbb O_{i_s}\setminus \mathbb \tau_t(J_s)\},\bar\psi_{s+1},\ldots, \bar\psi_r)$ going from $\tilde X/L_{\tau_t(i_s)}$ to $D_{(i_s;\tau_t)}(Y)$, which we denote by $\Psi_{(i_s;\tau_t)}$.
		Note that $\tilde X/L_{i_s}$ and $D_{(i_s;\tau_1)}(Y)=D_{(i_s;id)}(Y)$ are isomorphic to $\tilde X/L_{\tau_t(i_s)}$ and $D_{(i_s;\tau_t)}(Y)$, respectively and $\Psi_{(i_s;\tau_t)}$ can be obtained analogously to $\Psi_{(i_s;\tau_1)}=\Psi_{(i_s;id)}$, hence we consider $t=1$ from here on out.
		We have the following commutative diagram, extending the path of the orbit $\mathbb O_{i_s}$ of the tower in Diagram \eqref{D-tower-D};
		\begin{equation}\label{D-diagram-not normal}
			\begin{tikzcd}[column sep=huge]
				\tilde X\arrow["\cdots"{marking,fill=white}]{dd}\arrow{r}{(f_1,\ldots,f_n)}
				&
				F_{n}(Y)
				\arrow["\cdots"{marking,fill=white}]{dd}{\pi}\\
				&\\
				\tilde X/L_{\mathbb O_{i_s}}\arrow{d}\arrow{r}{\Psi_{\{i_s\}}}
				& D_{\{i_s\}}(Y)\arrow{d}{\pi}\\
				\tilde X/L_{i_s} \arrow{d}\arrow{r}{\Psi_{(i_s;id)}}
				& D_{(i_s;id)}(Y)
				\arrow{d}{\pi}\\
				X\arrow{r}{(\psi_1,\ldots,\psi_{r})}&D_{\mathbb I_0}(Y)
			\end{tikzcd}
		\end{equation}
	
		Note that this diagram could be expanded in multiple ways;
		We could intersect $L'_{i_s}$ with every combination of $L'_{\tau_t(i_s)}$'s, where $t\in \{1,\ldots,x_s\}$ and consider the spaces $\tilde X/L_{\{i_s,\tau_{t_1}(i_s),\ldots, \tau_{t_v}(i_s)\}}$, where the maps are analogously defined.
		As an example consider the space $\tilde X/L_{\{i_s,\tau_{t}(i_s)\}}$, then 
		$\Psi_{(i_s;id,\tau_{t})}=(\bar\psi_1,\ldots,\bar\psi_{s-1},\bar f_{j_1}\,\ldots,\bar f_{j_{k_s}},\bar f_{\tau_{t}(j_1)}\,\ldots,\bar f_{\tau_{t}(j_{k_s})},\{\bar f_j:j\in \mathbb O_{i_s}\setminus(\mathbb J_s\cup\tau_t(\mathbb J_s))\},\bar\psi_{s+1},\ldots, \bar\psi_r)$ is the map between $\tilde X/L_{\{i_s,\tau_{t}(i_s)\}}$ and $D_{(i_s;id,\tau_{t})}(Y)$, defined analogously to $\Psi_{(i_s;id)}$.
		We could also intersect $L'_{i_s}$ with stabilizer groups of another orbit $L'_{\mathbb O_{i_{\hat s}}}$ and, in case that $L'_{{i_{\hat s}}}$ is not normal, with the different stabilizers of the orbit of $i_{\hat s}$.
		In this case we would change in $\Psi_{(i_s;id,\tau_{t})}$ the map $\bar \psi_{\hat s}$ accordingly.
		
		Another aspect that could be explored more is to "refine" the lifts for each of these spaces.
		For example to find for $\tilde X/L_{i_s}$ the finest partition ${\mathcal P}$ of $\{1,\ldots,n\}$ such that $(\psi_1,\ldots, \psi_r)\circ  q_{i_s}:\tilde X/L_{i_s}\rightarrow D_{\mathbb I_0}(Y)$ can be lifted to $D_{{\mathcal P}}(Y)$.
		In the following example we construct a tower like in Diagram \eqref{D-diagram-not normal}, and observe that in this case we can refine the intermediate configuration space.

		\begin{ex}
			We give an example of a $6$-valued map from the Klein-bottle $\mathbb K$ to the sphere $\mathbb S$, where the stabilizer groups are not normal in $L'$.
			We use the following presentation of the braid group on $6$ strands on the sphere, with generators $\sigma_1,\ldots, \sigma_{n-1}$ and following relations;
			\begin{enumerate}[1.]
				\item $\sigma_i \sigma_j = \sigma_j \sigma_i$, for $|i-j| \geq 2 $,
				\item $ \sigma_i \sigma_{i+1} \sigma_i = \sigma_{i+1} \sigma_i \sigma_{i+1}$ for $1 \leq i \leq n - 2$ (the Yang–Baxter equation),
				\item $\sigma_{n-1}  \cdots \sigma_1^2 \cdots \sigma_{n-1} = 1$.
			\end{enumerate}
			Consider the $6$-valued non-split map $\phi:\mathbb K\rightarrow \mathbb S^2$ from the Klein bottle with $\pi_1(\mathbb K)=\langle \alpha,\beta: \alpha\beta\alpha=\beta^{-1}\rangle$ to the sphere, where $\Phi_\#:\pi_1(\mathbb K)\rightarrow B_4(\mathbb S^2)$, the induced homomorphism of the correspondent map, takes $\alpha$ to $\hat \alpha=\sigma_5\sigma_4\sigma_3\sigma_2\sigma_1$ and $\beta$ to
			$\hat \beta=\sigma_1\sigma_2\sigma_3\sigma_4 \sigma_1 \sigma_2\sigma_3 \sigma_1 \sigma_2\sigma_1$.\\
			                              
			\begin{tikzpicture}[scale=0.75, very thick]
				\foreach \k in {6}{\draw (\k,6) .. controls (\k,4) and (\k-5,4) .. (\k-5,2);};
				\foreach \k in {1}{\draw[white,line width=6pt] (\k,6) .. controls (\k,4) and (\k+1,4) .. (\k+1,2);
					\draw (\k,6) .. controls (\k,4) and (\k+1,4) .. (\k+1,2);};
				\foreach \k in {2}{\draw[white,line width=6pt] (\k,6) .. controls (\k,4) and (\k+1,4) .. (\k+1,2);
					\draw (\k,6) .. controls (\k,4) and (\k+1,4) .. (\k+1,2);};
				\foreach \k in {3}{\draw[white,line width=6pt] (\k,6) .. controls (\k,4) and (\k+1,4) .. (\k+1,2);
					\draw (\k,6) .. controls (\k,4) and (\k+1,4) .. (\k+1,2);};
				\foreach \k in {4}{\draw[white,line width=6pt] (\k,6) .. controls (\k,4) and (\k+1,4) .. (\k+1,2);
					\draw (\k,6) .. controls (\k,4) and (\k+1,4) .. (\k+1,2);};
				\foreach \k in {5}{\draw[white,line width=6pt] (\k,6) .. controls (\k,4) and (\k+1,4) .. (\k+1,2);
					\draw (\k,6) .. controls (\k,4) and (\k+1,4) .. (\k+1,2);};
				\foreach \k in {5}{\node at (3.5,1.3) {$\hat\alpha$};};
				
				\foreach \k in {16}{\draw (\k,6) .. controls (\k,2.5) and (\k-4,2.5) .. (\k-4,2);};			
				\foreach \k in {15}{\draw[white,line width=6pt] (\k,6) .. controls (\k,3.5) and (\k-2,3.5) .. (\k-2,2);
					\draw (\k,6) .. controls (\k,3.5) and (\k-2,3.5) .. (\k-2,2);};
				\foreach \k in {14}{\draw[white,line width=6pt] (\k,4) -- (\k,2);
					\draw (\k,6) -- (\k,2);};
				\foreach \k in {13}{\draw[white,line width=6pt] (\k,6) .. controls (\k,4.5) and (\k+2,4.5) .. (\k+2,2);
					\draw (\k,6) .. controls (\k,4.5) and (\k+2,4.5) .. (\k+2,2);};
				\foreach \k in {12}{\draw[white,line width=6pt] (\k,6) .. controls (\k,5.5) and (\k+4,5.5) .. (\k+4,2);
					\draw (\k,6) .. controls (\k,5.5) and (\k+4,5.5) .. (\k+4,2);};
				\foreach \k in {17}{\draw (\k,6) -- (\k,2);};	
				\foreach \k in {15}{\node at (14.5,1.3) {$\hat\beta$};};\\
			\end{tikzpicture}
			
			These two braids anti-commute, therefore $\Phi:\mathbb K\rightarrow D_4(\mathbb S^2)$ is well-defined.
			It holds that $\rho(\hat\alpha)=(123456)$ and $\rho(\hat\beta)=(15)(24)$, where $\rho:B_6(\mathbb S^2)\rightarrow \mathcal S_6$ takes the braid to its underlying permutation, as in Notation \ref{notation non-split}.
			Consequently $L'=im (\rho\circ \Phi_\#)=\langle (123456), (15)(24)\rangle=\langle \tau_1,\tau_2: \tau_1^6=id, \tau_2^2=id, \tau_1\tau_2\tau_1=\tau_2\rangle$.
			It follows that $\mathbb O_1=\{1,2,3,4,5,6\}$ and $L'_1=L'_4=\langle(26)(35)\rangle$, $L'_2=L'_5=\langle(13)(46)\rangle$ and $L'_3=L'_6=\langle(15)(24)\rangle$ are non-normal subgroup of $L'$.
			
			Consider the covering space $q:\tilde {\mathbb T}\rightarrow \mathbb K$ induced by $ker \theta=ker(\Phi_\#\circ \rho)$, which is isomorphic to $\pi_1(\tilde {\mathbb T}) = \langle a,b:ab=ba\rangle$, where we identify $a$ with $\alpha^6$ and $b$ with $\beta^2$.
			Let furthermore $\delta_1=\Gamma^{-1}((123456)), \delta_2=\Gamma^{-1}((15)(24))$, which generate $\mathcal Deck(\tilde {\mathbb T},q)$, since $\Gamma:\mathcal Deck(\tilde {\mathbb T},q)\rightarrow L'$ is an isomorphism (see Notation \ref{notation non-split}).
			Let $\hat \Phi=(f_1,f_2,f_3,f_4,f_5,f_6):\tilde {\mathbb T}\rightarrow F_6(\mathbb S^2)$ be the lift of $\Phi\circ q:\tilde {\mathbb T}\rightarrow D_6(\mathbb S)$.			
			%
			The tower of Diagram \eqref{D-diagram-not normal} in this case is the following;
			 
			\begin{equation*}
				\begin{tikzcd}[column sep=100pt]
					\tilde {\mathbb T}\arrow{d} \arrow{r}{(f_1,\ldots,f_6)}
					&F_{6}(\mathbb S)\arrow{d}{\pi}\\
					\tilde{\mathbb T}/\Gamma^{-1}(\langle (2\ 6)(3\ 5)\rangle)\arrow{d}\arrow{r}{(\bar f_1, \{\bar f_2, \bar f_3, \bar f_5, \bar f_6\}, \bar f_4)}
					&D_{\{1\},\{2,3,5,6\},\{4\}}(\mathbb S)\arrow{d}{\pi}\\
					\mathbb K\arrow[swap]{r}{\Phi}&D_{6}(\mathbb S)
				\end{tikzcd}
			\end{equation*}
					
			Consider $D_{\{1\},\{2,6\},\{3,5\},\{4\}}(\mathbb S)=F_6(\mathbb S)/\langle (2\ 6),(3\ 5)\rangle\subseteq F_6(\mathbb S)/\langle (2\ 6)(3\ 5)\rangle$, then there is a lift from $\tilde{\mathbb T}/\Gamma^{-1}(\langle (2\ 6)(3\ 5)\rangle)$ to the refined intermediate configuration space $D_{\{1\},\{2,6\},\{3,5\},\{4\}}(\mathbb S)$ given by $(\bar f_1,\{\bar f_2,\bar f_6\},\{\bar f_3,\bar f_5\},\bar f_4)$, giving rise to the following (commutative) diagram;
			
			\begin{equation*}
				\begin{tikzcd}[column sep=100pt]
					\tilde {\mathbb T}\arrow{dd} \arrow{r}{(f_1,\ldots,f_6)}	&F_{6}(\mathbb S)\arrow{d}{\pi}\\
					&D_{\{1\},\{2,6\},\{3,5\},\{4\}}(\mathbb S)\arrow{d}{\pi}\\
					\tilde{\mathbb T}/\Gamma^{-1}(\langle (2\ 6)(3\ 5)\rangle)\arrow{d}\arrow[dashed]{ru}{(\bar f_1, \{\bar f_2, \bar f_6\},\{\bar f_3, \bar f_5\}, \bar f_4)}\arrow{r}[swap]{(\bar f_1, \{\bar f_2, \bar f_3, \bar f_5, \bar f_6\}, \bar f_4)}
					&D_{\{1\},\{2,3,5,6\},\{4\}}(\mathbb S)\arrow{d}{\pi}\\
					\mathbb K\arrow[swap]{r}{\Phi}&D_{6}(\mathbb S)
				\end{tikzcd}
			\end{equation*}		
			
			The following diagram follows the paths through all stabilizer groups (which are isomorphic to each other), displaying the lifts to the already refined intermediate configuration spaces, but omitting the arrows between the intermediate configuration spaces, which make the diagram commutative; 
			\begin{equation*}
				\begin{tikzcd}[column sep=small]
					&& \tilde {\mathbb T}\arrow{lldd}\arrow{ldd}\arrow{rdd}\arrow{r}{(f_1,\ldots,f_6)}
					&F_{6}(\mathbb S)\\
					&&&\\
					\tilde{\mathbb T}/\Gamma^{-1}(\langle (2\ 6)(3\ 5)\rangle)\arrow{ddrr}\arrow{dd}{(\bar f_1,\{\bar f_2,\bar f_6\},\{\bar f_3,\bar f_5\}, \bar f_4)}
					&	\tilde {\mathbb T}/\Gamma^{-1}(\langle (1\ 3)(4\ 6)\rangle)\arrow{ddr}\arrow{dr}{( \{\bar f_1,\bar f_3\},\bar f_2,\{\bar f_4,\bar f_6\},\bar f_5)}
					&& \tilde {\mathbb T}/\Gamma^{-1}(\langle (1\ 5)(2\ 4)\rangle)\arrow{ldd}\arrow{d}{( \{\bar f_1,\bar f_5\},\{\bar f_2,\bar f_4\},\bar f_3, \bar f_6)}&\\
					&&D_{\{1,3\},\{2\},\{4,6\},\{5\}}(\mathbb S)&D_{\{1,5\},\{2,4\},\{3\},\{6\}}(\mathbb S)\\
					D_{\{1\},\{2,6\},\{3,5\},\{4\}}(\mathbb S)&& \mathbb K\arrow[swap]{r}{\Phi}&D_{6}(\mathbb S)
				\end{tikzcd}
			\end{equation*}	
		\end{ex}
				\color{black}

	\subsection{Regular groups and lift factors}\label{subsection: regular groups}
		In this subsection we recall selected concepts from group theory and analyze their effect on the structure of the tower of Diagrams \eqref{D-tower-D} and \eqref{D-diagram-not normal}.
		For that purpose let $G$ be a subgroup of $\mathcal S_n$ acting on $\{1,\ldots,n\}$.
		Then $G$ \textit{acts transitively on $\{1,\cdots,n\}$}, if for every $i,j\in \{1,\ldots,n\}$ there is a $\tau\in G$ such that $\tau(i)=j$,
		$G$ \textit{acts without fixed points} or is \textit{semi-regular}, if $\tau(i) = i$ for $\tau\in G$ and some $i\in \{1,\cdots,n\}$ implies that $\tau$ is the identity.
		We call $G$ \textit{regular} if $G$ acts both transitively and without fixed points.
		It is straightforward to show that if $G$ satisfies two of the following condition, then $G$ is regular;
		\begin{enumerate}
			\item $G$ acts transitively on $\{1,\cdots,n\}$, i.e. for every $i,j\in \{1,\ldots,n\}$ there is a $\tau\in G$ such that $\tau(i)=j$;
			\item $G$ acts without fixed points, i.e. $\tau(i) = i$ for $\tau\in G$ and some $i\in \{1,\cdots,n\}$, if and only if, $\tau$ is the identity permutation;  
			\item $|G|=n$.
		\end{enumerate}		
	
	Back to our context of Notations \ref{notation non-split}, \ref{notation action Li} and \ref{notation maps} and Diagrams \eqref{D-tower-D} and \eqref{D-diagram-not normal}, note that if $L'$ is semi-regular, then it holds that $L'_i=\{\tau\in L': \tau(i)=i\}=\{id\}$, for every $i\in \{1,\ldots,n\}$ and consequently $L_i=\{id\}$.
	Therefore both Diagrams \eqref{D-tower-D} and \eqref{D-diagram-not normal} just consist of the first and last lines, i.e. the diagrams will both be as in Diagram \eqref{split semi-regular};
	
	\begin{multicols}{2}
		\noindent
		\begin{equation}\label{split semi-regular}
			\begin{tikzcd}[column sep=huge]
				\tilde X\arrow{d}\arrow{r}{(f_1,\ldots,f_n)}&F_n(Y)\arrow{d}\\
				X\arrow[swap]{r}{\Phi}&D_{\mathbb I_0}(Y)
			\end{tikzcd}
		\end{equation}
		
		\noindent
		\begin{equation}\label{eq:classiclift_nonsplit2}
			\begin{tikzcd}[column sep=huge]
				\tilde X\arrow{d}\arrow{r}{(f_1,\ldots,f_n)}&F_n(Y)\arrow{d}\\
				X\arrow[swap]{r}{\Phi}&D_n(Y)
			\end{tikzcd}
		\end{equation}
	\end{multicols}
	
	If $L'$ is transitive, then $\mathbb O_1=\{1,\ldots,n\}$, i.e. $\mathbb I_0=\{1\}$, thus the covering $D_{m_1,\ldots,m_r}(Y)\rightarrow D_{n}(Y)$ (and consequently the ${\mathcal P}_{\mathbb I_0}$-split) is trivial.
	Hence Diagram \eqref{D-tower-D} also becomes Diagram \eqref{split semi-regular}, but with $D_{\mathbb I_0}(Y)=D_n(Y)$, i.e. the same as the "trivial" Diagram \eqref{eq:classiclift_nonsplit2}.
	But even with only one orbit it is possible that some single-valued maps $f_i$ "split" from the map $\Phi$.
	If, for an  $i\in \{1,\ldots,n\}$ it happens that $L'_i$ is not normal in $L'$, then Diagram \eqref{D-diagram-not normal} contains the intermediate covering space $\tilde X/L_i$, which is neither equal to $\tilde X$ nor to $X$, with a lift to a configuration space $D_P(Y)$, where $\{i\}$ is a set of the partition ${\mathcal P}$.
	
	If $L'$ is regular, i.e. both semi-regular and acting transitively, then Diagrams \eqref{D-tower-D} and \eqref{D-diagram-not normal} "shrink"  to
	the "trivial" Diagram \eqref{eq:classiclift_nonsplit2}.
	This means there is no  intermediate covering space, where we can "separate" any single-valued maps from $\Phi$.
	
\section{Borsuk-Ulam property}\label{sec:BU}
	In this section, we analyze the relation between the lift factors of a non-split $n$-valued map $\phi:X\multimap Y$ and the Borsuk-Ulam property.
	Let $\tilde X$ and $Y$ be topological spaces and let $\delta:\tilde X\rightarrow \tilde X$ be a free involution, i.e. $\delta$ is fixed point free and $\delta^2=Id_{\tilde X}$.
	We say that a homotopy class $[f_0]\in [\tilde X,Y]$ has the \textit{Borsuk-Ulam property with respect to $\delta$} if, for every $f\in [f_0]$, there exists an $\tilde x\in \tilde X$ such that $f(\tilde x)=(f\circ\delta)(\tilde x)$.
	Observe that if $[f]\in [\tilde X,Y]$ is a homotopy class that does not have the Borsuk-Ulam property in respect to a free involution $\delta:\tilde X\rightarrow \tilde X$,
	then there is a map $f_1\in [f]$ such that $f_1$ and $f_1\circ \delta$  are coincidence  free.  
	Consider the map $\hat \Phi=(f_1,f_1\circ\delta):\tilde X\rightarrow F_2(Y)$ and the quotient space $X=\tilde X/\delta$.
	Since $\hat \Phi$ is $\mathbb{Z}_2$-equivariant, it induces a map on the quotient spaces $\Phi:X\rightarrow D_2(Y)$, which has a correspondent $2$-valued non-split map $\phi:X\multimap Y$.
	Conversely, let $\phi:X\multimap Y$ be a $2$-valued non-split map, $\Phi:X\rightarrow D_2(Y)$ its correspondent map and $\hat\Phi=(f_1,f_2):\tilde X\rightarrow F_2(X)$ the lift of $\Phi\circ q$, as in Notation \ref{notation non-split}.
	Then $q:\tilde X\rightarrow X$ is a $2$-sheeted covering space and there is a unique non-trivial deck-transformation $\delta\in \mathcal Deck(\tilde X,q)$.
	It holds that $\Gamma$ takes $\delta$ to $\tau=(12)$, the only non-trivial deck-transformation in $\mathcal Deck(F_2,\pi)$ and
	Lemma \ref{fsigmak=fdeltak} implies that $f_2=f_{\tau(1)}=f_1\circ\delta$.
	Since $\hat \Phi=(f_1,f_2)$ is a map to $F_2(Y)$, $f_1$ and $f_1\circ \delta=f_2$ are coincidence free.
	Therefore $[f_1]$ does not have the Borsuk-Ulam property in respect to $\delta$ (for more details see \cite{maues2025computation}). 
	
	It is possible to generalize the Borsuk-Ulam property to a finite group action, instead of an involution (see \cite{gonccalves2022free}).
	For that purpose, let $\tilde X$ and $Y$ be topological spaces and let $G$ be a finite non-trivial group with free action $G\times \tilde X\rightarrow\tilde X$. 
	Then every $g\in G$ induces a homeomorphism on $\tilde X$, which we denote, abusing notation, as $g:\tilde X\rightarrow \tilde X$.
	We say that a homotopy class $[f_0]\in [\tilde  X, Y]$ has the \textit{Borsuk-Ulam property with respect to $G$} if, for every $f\in [f_0]$ there exist distinct $g_1, g_2\in G$ and an $\tilde x\in \tilde X$ such that $f(g_1(\tilde x))=f(g_2(\tilde x))$.

	Inspired by the case for $n=2$ it is possible to establish an $n$-valued non-split map using a homotopy class that does not satisfy the Borsuk-Ulam property in respect to $G$, where  $G$ is a group of cardinality  $n$.
	The result is known (see \cite[Theorem 2.4.]{gonccalves2022free} and \cite[Theorem 6]{gonccalves2024borsuk}), but we formulate it again in the next proposition in a form which is more suitable for us in this paper.
	
		\begin{prop}\label{NonBUmult}
			Let $G$ with order $|G|=n$ be a non-trivial finite group with free action $G\times \tilde X\rightarrow\tilde X$ on a space $\tilde X$.
			Let  furthermore $\delta_1,\ldots,\delta_{n}$ be the $n$ distinct elements of $G$, where $\delta_1=id$, and let $[f_0]\in [\tilde X, Y]$ be a homotopy class, that does not satisfy the Borsuk-Ulam property in respect to $G$.
			Then there is an $n$-valued non-split map $\Phi:\tilde X/G\rightarrow D_n(Y)$, where $L'$, given as in Notation \ref{notation non-split}, is isomorphic to $G$.
			Furthermore, $L'$ is the regular subgroup of $\mathcal S_n$ whose elements are given by right multiplication on $G$ (as in the proof of Cayley's theorem). 
		\end{prop}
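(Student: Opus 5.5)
Since $[f_0]$ does not have the Borsuk-Ulam property with respect to $G$, there is a representative $f\in[f_0]$ such that $f(g_1(\tilde x))\neq f(g_2(\tilde x))$ for all $\tilde x\in\tilde X$ and all distinct $g_1,g_2\in G$. I would define
\begin{equation*}
\hat\Phi=(f\circ\delta_1,\ldots,f\circ\delta_n):\tilde X\rightarrow F_n(Y),
\end{equation*}
which is well defined precisely by the choice of $f$. For each $g\in G$ I would set $\sigma_g\in\mathcal S_n$ by $\delta_{\sigma_g(i)}=\delta_i g$; this is the right regular representation, so it is injective and acts regularly on $\{1,\ldots,n\}$. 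Then $\hat\Phi\circ g=(f\circ\delta_1 g,\ldots,f\circ\delta_n g)=(f\circ\delta_{\sigma_g(1)},\ldots)=\sigma_g\cdot\hat\Phi$, because the composition of actions gives $(\delta_i\circ g)(\tilde x)=\delta_i(g(\tilde x))$. Hence $\hat\Phi$ is equivariant and $\pi\circ\hat\Phi$ is $G$-invariant, so it descends to a continuous $\Phi:X:=\tilde X/G\rightarrow D_n(Y)$. Continuity holds because $\tilde X\rightarrow\tilde X/G$ is a covering, as the action is free and $G$ is finite. One should double check the left/right convention so that $g\mapsto\sigma_g$ is a homomorphism (or an antihomomorphism, which gives the same image group); if needed, take $\sigma_{g}$ defined via $g^{-1}$.

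Next I would identify $L'=\operatorname{im}(\rho\circ\Phi_\#)$. Take a loop $\gamma$ in $X$ at $x_0$, lift it to $\tilde\gamma$ from $\tilde x_0$ to $g(\tilde x_0)$, and note that $\hat\Phi\circ\tilde\gamma$ is the lift of $\Phi\circ\gamma$ to $F_n(Y)$ starting at $\hat\Phi(\tilde x_0)$ and ending at $\hat\Phi(g\tilde x_0)=\sigma_g\cdot\hat\Phi(\tilde x_0)$. The underlying permutation of the braid $\Phi_\#[\gamma]$ is therefore $\sigma_g$, up to the inverse convention fixed by $\rho$. Since $\tilde X$ is path connected, every $g\in G$ arises this way. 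Thus $\theta(\pi_1(X))=\{\sigma_g:g\in G\}$, which is the Cayley (right regular) copy of $G$ in $\mathcal S_n$, and $L'\cong G$.

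Non-splitness follows because $L'$ is nontrivial ($G$ is nontrivial and $g\mapsto\sigma_g$ is injective), so $\ker\theta\neq\pi_1(X)$. By the lifting criterion recalled in Section \ref{sec:prelim}, $\Phi$ then has no lift to $F_n(Y)$. As a consistency check, $\ker\theta$ equals $q_\#\pi_1(\tilde X)$, since $\sigma_g=\mathrm{id}$ iff $g=\mathrm{id}$ iff the lifted loop closes. So the covering of Notation \ref{notation non-split} is exactly $\tilde X\rightarrow\tilde X/G$, with $\Gamma(g)=\sigma_g$.

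The main obstacle is the bookkeeping of conventions: the action $\tau\cdot(y_1,\ldots,y_n)=(y_{\tau(1)},\ldots,y_{\tau(n)})$ is a right action, and $\rho$ is defined via $\beta(1)=(p_{\tau(1)},\ldots)$. One must verify that the resulting group is the right-multiplication copy and that the map is a homomorphism rather than an antihomomorphism. I would resolve this by computing directly with the formula $\delta_{\sigma_g(i)}=\delta_i g$ and checking equation \eqref{Phidelta=sigmaPhi}.
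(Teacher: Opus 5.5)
Your proposal is correct and follows the same route as the paper: you take a representative $f$ with $f\circ\delta_1,\ldots,f\circ\delta_n$ pairwise coincidence-free, form $\hat\Phi=(f\circ\delta_1,\ldots,f\circ\delta_n)$, use right multiplication to get the equivariance $\hat\Phi\circ g=\sigma_g\cdot\hat\Phi$, and pass to the quotient $\tilde X/G$. The paper only asserts ``by construction'' that $\Phi$ is non-split, that $G=\mathcal Deck(\tilde X,q)$, and that $L'=G'$; you supply the justification, namely the path-lifting computation of $\theta$ and the check $\ker\theta=q_\#\pi_1(\tilde X)$. You also correctly flag the homomorphism/antihomomorphism convention issue, which the paper passes over and which does not change the image subgroup.
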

		\begin{proof}
			From the hypothesis, there exists a map $f_1\in [f_0]$ such that $f_1\circ \delta_1,\ldots f_1\circ \delta_{n}$ are pairwise coincidence-free.
			So we can define the following map $\hat \Phi=(f_1\circ \delta_1,\ldots, f_1\circ \delta_{n}):\tilde X\rightarrow F_n(Y)$. 
			By \cite[Lemma 2]{gonccalves2024borsuk} the map $\hat\Phi$ is equivariant, as is described in the next few sentences (for more details see the proof of \cite[Lemma 2]{gonccalves2024borsuk}).
				For every $\delta\in G$ consider $(f_1\circ \delta_1\circ \delta,\ldots, f_1\circ \delta_{n}\circ \delta)$.
				Since $G=\{\delta_1,\ldots,\delta_n\}$, it holds that $\delta_1\circ \delta=\delta_j$, where $j\in \{1,\ldots,n\}$.
				Analogously for any $i\in \{1,\ldots, n\}$, there exists a $j\in \{1,\ldots, n\}$, such that $\delta_i\circ \delta=\delta_j$.
				Hence $\delta$ induces a permutation $\tau$, where $\tau(i)=j$, which defines an isomorphism $\Gamma'$ between $G$ and a subgroup $G'$ of $\mathcal S_n$, taking every $\delta$ to its induced permutation $\tau$.		
			Then it holds, for every $\delta$ and $\tau=\Gamma'(\delta)$, that
			\begin{equation*}
				(f_1\circ \delta_1\circ \delta,\ldots, f_1\circ \delta_{n}\circ \delta)=(f_1\circ \delta_{\tau(1)},\ldots, f_1\circ \delta_{\tau(n)}),
			\end{equation*}
			that is $\hat \Phi \circ \delta = \Gamma'(\delta)\cdot\hat \Phi$.
			Hence the elements of $G'$ are given by right multiplication on $G$ and it is straightforward to show that $G'$ is regular.
			Furthermore the map $\hat \Phi$ is equivariant, hence it induces a map on the quotient spaces $\Phi:\tilde X/G\rightarrow D_n(Y)$, which, by construction, is non-split, $G=\mathcal Deck(\tilde X,q)$ (see \cite[Proposition 1.39.]{hatcher2002algebraic}) and $L'=G'$.
		\end{proof}
		
	Note that in the construction of $\Phi$ in the proof of Proposition \ref{NonBUmult} we obtain that the lift of $\Phi\circ q$ is given by $\hat \Phi=(f_1\circ \delta_1,\ldots, f_1\circ \delta_{n})$, where $f_1\in [f_0]$ and $\delta_1,\ldots,\delta_n\in \mathcal Deck(\tilde X,q)$, since by $\mathcal Deck(\tilde X,q)=G$.
	We would wish that the converse of Proposition \ref{NonBUmult} follows, as it did for the case where $n=2$.
	Hence we would wish that the following two facts hold, first that for every $n$-valued non-split map $\Phi:X\rightarrow Y$ the 
	homotopy class of the lift factor $f_1$ does not satisfy the Borsuk-Ulam property in respect to the action given by $ \mathcal Deck(\tilde X,q)$.
	Secondly that, for each $j=1,\ldots n$ there exists $\delta\in \mathcal Deck(\tilde X,q)$, such that $f_j=f_1\circ \delta$.
	Unfortunately either of these two statements don't hold without further restrictions.
	
	To illustrate the complexity of a converse formulation, which will be useful, we explicit two examples of $4$-valued non-split maps, where we study the effect of the deck-transformations on the lift factors $f_1,f_2,f_3$ and $f_4$.
	These examples also suggest how to formulate the converse questions.
	
	\begin{ex}\label{ex Z2+Z2}
		In this example we display two $4$-valued non-split maps from the torus $\mathbb T$ to the disc $\mathbb D$, such that $L'$ in both cases is isomorphic to $\mathbb Z_2\oplus\mathbb Z_2$, but where their lift factors behave differently in respect to the group action $\mathcal Deck( \mathbb {\tilde T},q)$.
		See Notation \ref{notation non-split} to recall the meaning of $L'$, $\mathcal Deck(\tilde X,q)$, $\Gamma$ etc.
		The following commutative diagrams illustrate both examples, where $i=1,2$;	
			\begin{multicols}{2}
					\noindent
					\begin{equation*}
						\begin{tikzcd}[column sep=huge]
							\pi_1(\mathbb T)\arrow{r}{(\Phi_i)_\#}\arrow{dr}[swap]{\theta_i}&B_4(\mathbb D)\arrow{d}{\rho}\\
							&\mathcal S_4.
						\end{tikzcd}
					\end{equation*}	
								\noindent
					\begin{equation*}
						\begin{tikzcd}[column sep=huge]
							\tilde {\mathbb T} \arrow{d}[swap]{q_i}\arrow{r}{
								\{f_1,f_2,f_3,f_4\}} &F_4(\mathbb D)\arrow{d}{\pi}\\
							\mathbb T\arrow{r}[swap]{\Phi_i}&D_4(\mathbb D).
						\end{tikzcd}
					\end{equation*}
				\end{multicols}
				\noindent
	On that account, recall that a presentation of the braid group on $n$ strands on the disc $\mathbb D$ is given as follows;
		\begin{equation*}
			B_n= \left \langle \sigma_1,\ldots,\sigma_{n-1}; \sigma_i \sigma_{i+1} \sigma_i = \sigma_{i+1} \sigma_i \sigma_{i+1}, \sigma_i \sigma_j=\sigma_j\sigma_i \text{ for } |i-j|>1 \right \rangle.
		\end{equation*}	
		
		\begin{enumerate}[a)]
			\item \label{ex Z2+Z2 not reg}
				Consider the map $\Phi_1:\mathbb T\rightarrow D_4(\mathbb D)$ from the torus to the disc such that the correspondent homomorphism $(\Phi_1)_\#$ takes $a,b$, the generators of $\pi_1(\mathbb T)$, to $\sigma_1$, respectively to $\sigma_3$ on $B_4$, braids on the disc.
				These two braids commute, therefore such map $\Phi$ does exist.
				Then $\rho(\sigma_1)=(12)$ and $\rho(\sigma_3)=(34)$ in $\mathcal S_4$.
				Consider $ker \theta_1=\langle a^2, b^2 \rangle$, which induces a covering space $q_1:\tilde {\mathbb T}\rightarrow \mathbb T$ and a lift $\hat \Phi_1=\{f_1,f_2,f_3,f_4\}:\tilde {\mathbb T}\rightarrow F_4(\mathbb D)$;
							The group of deck-transformations $\mathcal Deck(\tilde {\mathbb T}, q_1)$ is isomorphic, through $\Gamma_1$, to $L'_1=im\theta=im (\rho \circ\Phi_\#)=\langle (12),(34)\rangle\simeq \mathbb Z_2\oplus \mathbb Z_2$.
				We denote $\delta_1=\Gamma_1^{-1}((12))$ and $\delta_2=\Gamma_1^{-1}((34))$, which generate $ \mathcal Deck(\tilde  {\mathbb T},q_1)$.
				Proposition \ref{fsigmak=fdeltak} implies that $ f_2=f_1\circ \delta_1$.
				Since $f_1$ and $f_2$ are pairwise coincidence-free, so are $f_1$ and $f_1\circ \delta_1$, therefore $f_1$ does not satisfy the Borsuk-Ulam property with respect to $\langle \delta_1\rangle$.
				Analogously $f_4=f_3\circ \delta_2.$ and $f_3$ does not satisfy the Borsuk-Ulam property with respect to $\langle \delta_2\rangle$.
				But also from Proposition \ref{fsigmak=fdeltak} it holds that $f_1=f_1\circ \delta_2$ and $f_3=f_3\circ \delta_1$, so we can not conclude about 
				the Borsuk-Ulam property of $[f_1],[f_2]$ in respect to $\mathcal Deck(\tilde {\mathbb T},q_1)=\langle \delta_1,\delta_2\rangle$. Nevertheless we can say that the classes $[f_1],[f_2]$ do not have the Borsuk-Ulam property in respect to the groups $\langle \delta_1\rangle$,  $\langle \delta_2\rangle$, respectively.   
 
		 		\item \label{ex Z2+Z2 reg}
		 		When we change the braids on which the generators $a$ and $b$ are mapped to, we obtain another result.
		 		Consider the map $\Phi_2:\mathbb T\rightarrow D_4(\mathbb D)$ from the torus to the disc such that the correspondent homomorphism $(\Phi_2)_\#$ takes the generators $a$ and $b$ of $\pi_1(\mathbb T)$, to $\sigma_1\sigma_3$ and to the 
		 		Garside element $\Delta_4=(\sigma_1\sigma_2\sigma_3)(\sigma_1\sigma_2)(\sigma_1)$ on $B_4$, respectively.
		 		It is straightforward to see that these two braids commute, therefore $\Phi_2$ is in fact a map.
		 		Then $\rho(\sigma_1\sigma_3)=(12)(34)$ and $\rho(\Delta_4)=(14)(23)$ in $\mathcal S_4$.
		 		Consider $ker \theta_2=\{id,(12)(34),(14)(23),(13)(24)\}$, which induces a covering space $q_2:\tilde { \mathbb T}\rightarrow \mathbb T$ and a lift $\hat \Phi_2=\{f_1,f_2,f_3,f_4\}:\tilde {\mathbb T}\rightarrow F_4(\mathbb D)$.
		 		This is a $4$-sheeted covering and we enumerate the deck-transformation $\delta_1,\delta_2,\delta_3\in \mathcal Deck(\tilde{\mathbb T}, q_2)$ in such a way that
		 		$\Gamma_2(\delta_1)=(12)(34)$, $\Gamma_2(\delta_2)=(14)(23)$ and $\Gamma_2(\delta_3)=(13)(24)$.
		 		Hence $\mathcal Deck(\tilde T,q_2)$is isomorphic to $L'=\langle (12)(34), (13)(24)\rangle \simeq \mathbb Z_2\oplus \mathbb Z_2$
		 		
		 		By Proposition \ref{fsigmak=fdeltak} we have that $f_2=f_1\circ \delta_1$, $f_4=f_1\circ \delta_2$ and $f_3=f_1\circ\delta_3$, which all do not have any coincidence points with $f_1$.
		 		Therefore the homotopy class of $f_1$ does not satisfy the Borsuk-Ulam property in respect to  $\mathcal Deck(\tilde {\mathbb T},q_2)$.
		\end{enumerate}
		Observe that in both examples above, the spaces $\tilde{\mathbb T}$’s are the same, and in both cases $L'$ is isomorphic to $\mathbb{Z}_2 \oplus \mathbb{Z}_2$.
		Moreover, there is only one homotopy class of maps $[\tilde{\mathbb T}, \mathbb D]$.
		However, we obtain different conclusions in these two cases: Example \textit{a)} does not allow us to decide whether the Borsuk–Ulam property holds or not, whereas Example \textit{b)} implies that for any map $f: \tilde{\mathbb T} \to \mathbb D$, the homotopy class $[f]$ does not have the Borsuk–Ulam property. Hence the triple $({\mathbb T}, \mathbb{Z}_2 \oplus \mathbb{Z}_2, {\mathbb D})$ does not have the Borsuk–Ulam property.
		To the best of our knowledge, this result is new.
	\end{ex}
	
	\begin{cor}\label{Z2+Z2 nao BU}
		The triple $({\mathbb T}, \mathbb{Z}_2 \oplus \mathbb{Z}_2, {\mathbb D})$ does not have the Borsuk–Ulam property.
	\end{cor}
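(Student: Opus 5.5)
The plan is to extract the corollary directly from part b) of Example~\ref{ex Z2+Z2}, making explicit the identification of the action. First I will pin down the statement: the triple $(\mathbb T,\mathbb Z_2\oplus\mathbb Z_2,\mathbb D)$ refers to a free action of $G=\mathbb Z_2\oplus\mathbb Z_2$ on a torus whose orbit space is again a torus. Failure of the Borsuk--Ulam property means there is some map $f:\mathbb T\to\mathbb D$ whose translates $f\circ g$, $g\in G$, are pairwise coincidence free. Since $\mathbb D$ is contractible, $[\tilde{\mathbb T},\mathbb D]$ has a single element. Hence it suffices to exhibit one map $f$ together with one such free action.

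I will use the map $\Phi_2:\mathbb T\to D_4(\mathbb D)$ with $(\Phi_2)_\#(a)=\sigma_1\sigma_3$ and $(\Phi_2)_\#(b)=\Delta_4$. These braids commute because $\Delta_4$ conjugates $\sigma_i$ to $\sigma_{4-i}$, so it fixes $\sigma_1\sigma_3$. Since $D_4(\mathbb D)$ is aspherical, the homomorphism is realized by a map. Next I compute $\theta=\rho\circ(\Phi_2)_\#$. Its image is $L'=\{id,(12)(34),(14)(23),(13)(24)\}$, and its kernel is $\langle a^2,b^2\rangle$, which has index $4$. So by Notation~\ref{notation non-split}, $q:\tilde{\mathbb T}\to\mathbb T$ is the $4$-sheeted regular cover associated with $\ker\theta$. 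The total space $\tilde{\mathbb T}$ is a torus, since $\ker\theta\cong\mathbb Z^2$ and covers of the torus are tori. The deck group acts freely with quotient $\mathbb T$, and $\Gamma$ identifies it with $L'\cong\mathbb Z_2\oplus\mathbb Z_2$.

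The main step applies Proposition~\ref{fsigmak=fdeltak}. Because $L'$ is regular (transitive and of order $4$), for each $j$ there is a unique $\delta$ with $\Gamma(\delta)(1)=j$. This gives $\{f_1\circ\delta:\delta\in\mathcal Deck(\tilde{\mathbb T},q)\}=\{f_1,f_2,f_3,f_4\}$, a bijection between deck transformations and lift factors. Since $\hat\Phi_2=(f_1,\dots,f_4)$ takes values in $F_4(\mathbb D)$, these four maps are pairwise coincidence free. Now take distinct $g_1,g_2$ in the deck group. Then $f_1\circ g_1$ and $f_1\circ g_2$ are two distinct lift factors, so $f_1(g_1\tilde x)\neq f_1(g_2\tilde x)$ for every $\tilde x$. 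Thus $[f_1]$, the unique class, fails the Borsuk--Ulam property for this free $\mathbb Z_2\oplus\mathbb Z_2$-action.

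The hard part is not the computation but the interpretation: the conclusion holds for this particular free action. If the statement is read as covering every free $\mathbb Z_2\oplus\mathbb Z_2$-action with torus quotient, I will argue as follows. Any such action corresponds to a normal subgroup $K\subset\pi_1(\mathbb T)$ with $\pi_1(\mathbb T)/K\cong\mathbb Z_2\oplus\mathbb Z_2$. Since $\pi_1(\mathbb T)=\mathbb Z^2$, the only such $K$ is $2\mathbb Z^2=\langle a^2,b^2\rangle$, which is exactly $\ker\theta$. Therefore the action is unique up to equivariant homeomorphism, and the example covers all cases.
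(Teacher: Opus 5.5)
Your proposal is correct and follows essentially the same route as the paper, which derives the corollary from Example~\ref{ex Z2+Z2}\,b). Like you, the paper uses Proposition~\ref{fsigmak=fdeltak}, the regularity of $L'=\{id,(12)(34),(14)(23),(13)(24)\}$, and the fact that $[\tilde{\mathbb T},\mathbb D]$ is a single class. Your additions fill in details the paper leaves implicit: you check arbitrary pairs $g_1\neq g_2$ rather than only $id$ versus $\delta$, you correctly identify $\ker\theta=\langle a^2,b^2\rangle$ (the set the paper displays as the kernel is actually the image), and you show via the uniqueness of the subgroup $2\mathbb Z^2$ that every free $\mathbb Z_2\oplus\mathbb Z_2$-action on the torus with torus quotient is this one.
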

	
	\begin{rem}
		The example above shows that the group $\mathbb{Z}_2\oplus\mathbb{Z}_2$ can be embedded in $\mathcal S_4$ in two different ways, where the	
		image of the embedding in the case of Example \ref{ex Z2+Z2}.\ref{ex Z2+Z2 reg} is a regular subgroup and in the case of Example \ref{ex Z2+Z2}.\ref{ex Z2+Z2 not reg} is not. 
		There are many more examples of embeddings of groups besides $\mathbb{Z}_2\oplus\mathbb{Z}_2$ with similar behavior, 
		for example consider the embeddings of the group $\mathbb{Z}_3\oplus\mathbb{Z}_3$   
		having as image the subgroups $G_1=\langle (123)(456)(789), (147)(258)(369)\rangle$ and $G_2=\langle (123), (456)\rangle$, as subgroups of  $\mathcal S_9$.
		In the first case the subgroup is regular and in the second case is not.
		For another example, where the group is not abelian,   consider  the finite group  $\mathcal S_3$, and two embeddings  in  $\mathcal S_9$ having
		image the subgroups  $G_1=\langle   (14)(25)(36), (123)(456)(369)\rangle$  and $G_2=\langle  (12),  (123)\rangle$.
		Note that $G_1$ is regular subgroup, while $G_2$ is not.
			
		Finally observe that given any finite group  $G$ of cardinality $n$ the embedding (either right or left) used in the proof of the classical Cayley's theorem is regular.
	\end{rem}

	We saw in Example \ref{ex Z2+Z2} that both transitivity and regularity of the group $L'$ play a big role in how the lift factors behave.
	If we look at a specific lift factor $f_i$, with $i\in \{1,\ldots,n\}$ fixed, we can consider a property weaker than regularity, only asking for the stabilizer group of $i$ to be trivial, i.e. $L_i'=\{id\}$.
	With these considerations and Example \ref{ex Z2+Z2} in mind we state a sort of converse of Proposition \ref{NonBUmult}.

	\begin{theo}\label{transitive, semireg=> phi}
		Let $\phi:X\multimap Y$ be an $n$-valued non-split map with correspondent map $\Phi:X\rightarrow D_n(Y)$, with $\hat \Phi=(f_1,\ldots,f_n):\tilde X\rightarrow F_n(Y)$ and $L'$ as in Notation \ref{notation non-split}.
		Let furthermore $H'\subseteq L'$ be a subgroup,
		then the following holds;
		\begin{enumerate}[a)]
			\item \label{transitive, semireg=> phi: trans}
				if $L'$ acts transitively on $\{1,\ldots,n\}$,
				 then there exists, 
				for every $j\in \{1,\ldots,n\}$ a deck transformation $\delta_j\in \mathcal Deck(\tilde X,q)$ such that $f_j=f_1\circ \delta_j$;
			\item \label{transitive, semireg=> phi: Li=1}
				if, for a subgroup $H'\subseteq L'$ the stabilizer $H'_i$ of an $i\in \{1,\ldots,n\}$ in $H'$ is trivial, then $[f_i]$ does not satisfy the Borsuk-Ulam property in respect to $H=\Gamma^{-1}(H')$.
		\end{enumerate}
	\end{theo}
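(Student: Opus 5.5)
Both parts come down to Proposition~\ref{fsigmak=fdeltak} (with $k=1$), which gives $f_{\tau(i)}=f_i\circ\delta$ for every $\delta\in\mathcal Deck(\tilde X,q)$ and $\tau=\Gamma(\delta)$. The plan is to translate each hypothesis on the permutation group $L'$ (or $H'$) into a statement about the deck transformations through the isomorphism $\Gamma$. Then we read off the conclusion from the fact that $\hat\Phi$ takes values in $F_n(Y)$, so distinct lift factors are pointwise distinct.

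For part \ref{transitive, semireg=> phi: trans}, fix $j\in\{1,\ldots,n\}$. Transitivity of $L'$ gives some $\tau_j\in L'$ with $\tau_j(1)=j$. Since $\Gamma:\mathcal Deck(\tilde X,q)\to L'$ is an isomorphism (Notation~\ref{notation non-split}), we set $\delta_j=\Gamma^{-1}(\tau_j)$. Proposition~\ref{fsigmak=fdeltak} with $i=1$ and $k=1$ then yields $f_j=f_{\tau_j(1)}=f_1\circ\delta_j$. Nothing more is needed here.

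For part \ref{transitive, semireg=> phi: Li=1}, the Borsuk--Ulam property with respect to $H=\Gamma^{-1}(H')$ is defined only for a free action. So first note that $H$ is a subgroup of the deck group of a covering, and hence acts freely on $\tilde X$. If $H'$ is trivial the statement is vacuous or degenerate, so we assume $H$ is non-trivial. To show that $[f_i]$ fails the property, it suffices to exhibit the single representative $f=f_i$ and check that $f_i(g_1(\tilde x))\neq f_i(g_2(\tilde x))$ for all distinct $g_1,g_2\in H$ and all $\tilde x\in\tilde X$. By Proposition~\ref{fsigmak=fdeltak}, $f_i\circ g_1=f_{\Gamma(g_1)(i)}$ and $f_i\circ g_2=f_{\Gamma(g_2)(i)}$. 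Suppose $\Gamma(g_1)(i)=\Gamma(g_2)(i)$. Then $\Gamma(g_2)^{-1}\Gamma(g_1)=\Gamma(g_2^{-1}g_1)$ lies in the stabilizer $H'_i=\{id\}$, so injectivity of $\Gamma$ forces $g_1=g_2$. Hence distinct $g_1,g_2$ give distinct indices $k_1=\Gamma(g_1)(i)\neq k_2=\Gamma(g_2)(i)$, and therefore $f_{k_1}(\tilde x)\neq f_{k_2}(\tilde x)$ for every $\tilde x$, because $(f_1,\ldots,f_n)(\tilde x)\in F_n(Y)$. This shows $[f_i]$ does not have the Borsuk--Ulam property with respect to $H$.

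The only point needing care is the composition convention: $\Gamma$ must be a homomorphism compatible with the convention $f_{\tau(i)}=f_i\circ\delta$, so that $f_i\circ(g_2^{-1}g_1)$ corresponds to the right permutation. This sidesteps the issue, since the argument uses only that $g\mapsto \Gamma(g)(i)$ is injective on $H$. That injectivity holds whether $\Gamma$ is a homomorphism or an anti-homomorphism, because in either case $\Gamma(g_1)(i)=\Gamma(g_2)(i)$ puts an element of the form $\Gamma(g_2)^{-1}\Gamma(g_1)$, with $g_2^{-1}g_1$ or $g_1g_2^{-1}$ non-trivial, into $H'_i$. So I expect no genuine obstacle, only this bookkeeping and the remark that $H$ acts freely.
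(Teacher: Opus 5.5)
Your proposal is correct and is essentially the paper's proof: both parts follow directly from Proposition~\ref{fsigmak=fdeltak} applied to $\delta=\Gamma^{-1}(\tau)$, together with the fact that lift factors with distinct indices are coincidence-free. The differences are cosmetic. In b) you treat arbitrary pairs $g_1\neq g_2$ via injectivity of $g\mapsto\Gamma(g)(i)$ on $H$, whereas the paper only compares $f_i$ with $f_i\circ\delta$ for $\delta\neq\mathrm{id}$; this suffices because $f(g_1(\tilde x))=f(g_2(\tilde x))$ is the same as $f(\tilde y)=f(g_2g_1^{-1}(\tilde y))$ with $\tilde y=g_1(\tilde x)$. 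Your homomorphism/anti-homomorphism worry is moot, since $\Gamma(g_2)^{-1}\Gamma(g_1)\in H'_i=\{\mathrm{id}\}$ already gives $\Gamma(g_1)=\Gamma(g_2)$, and hence $g_1=g_2$ by injectivity of $\Gamma$ alone.
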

	\begin{proof}
		For \textit{a)} let $j\in \{1,\ldots,n \}$, then there exists a permutation $\tau_j \in L'$ such that $\tau_j(1)=j$, by the transitivity of $L'$.
		Let $\delta_j=\Gamma^{-1}(\tau_j)\in \mathcal Deck(\tilde X, q)$,
		then by Proposition \ref{fsigmak=fdeltak} it follows that $f_j=f_{\tau_j(1)}=f_1\circ \delta_j$.
		For \textit{b)} let $\tau\in H'$ and $\delta=\Gamma^{-1}(\tau)\in H=\Gamma^{-1}(H')$, then Proposition \ref{fsigmak=fdeltak} implies that $f_i\circ \delta= f_{\tau(i)}$.
		From the fact that $H'_i=\{id\}$, it follows that $\tau(i)\neq i$ and since $f_1,\ldots,f_n$ are pairwise coincidence-free, so are $f_i$ and $f_i\circ \delta$.
		Therefore does not satisfy the Borsuk-Ulam property in respect to $H$.
	\end{proof}
	\begin{cor}\label{reg=>phi}
		Let	$\phi:X\multimap Y$ be an $n$-valued non-split map as in Theorem \ref{transitive, semireg=> phi}
		and let $L'$ act regularly on $F_n(Y)$.
		Then it holds for every $i\in \{1,\ldots,n\}$ that there is $\delta_i\in \mathcal Deck(\tilde X,q)$ such that $f_i=f_1\circ \delta_i$ and the homotopy class $[f_1]$ does not satisfy the Borsuk-Ulam property in respect to $\mathcal Deck(\tilde X, q)$.
	\end{cor}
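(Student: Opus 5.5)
The corollary follows from Theorem \ref{transitive, semireg=> phi} by applying each of its two parts to $L'$ itself. Although the statement says "$L'$ acts regularly on $F_n(Y)$", the intended meaning is that $L'\subseteq\mathcal S_n$ acts regularly on the index set $\{1,\ldots,n\}$. The action on $F_n(Y)$ is always free, since $\mathcal S_n$ acts freely there, so regularity is only meaningful for the action on indices. Regularity gives two facts: $L'$ acts transitively on $\{1,\ldots,n\}$, and $L'$ is semi-regular, i.e. $L'_i=\{id\}$ for every $i$.

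For the first assertion, we use transitivity and invoke part \ref{transitive, semireg=> phi: trans} of Theorem \ref{transitive, semireg=> phi}. This gives, for each $i\in\{1,\ldots,n\}$, a deck transformation $\delta_i=\Gamma^{-1}(\tau_i)\in\mathcal Deck(\tilde X,q)$, where $\tau_i\in L'$ satisfies $\tau_i(1)=i$. By Proposition \ref{fsigmak=fdeltak} (with $k=1$), $f_i=f_{\tau_i(1)}=f_1\circ\delta_i$.

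For the second assertion, we apply part \ref{transitive, semireg=> phi: Li=1} with $H'=L'$ and $i=1$. Semi-regularity gives $H'_1=L'_1=\{id\}$. Since $\Gamma$ is an isomorphism onto $L'$, we have $H=\Gamma^{-1}(L')=\mathcal Deck(\tilde X,q)$, so $[f_1]$ does not satisfy the Borsuk-Ulam property with respect to $\mathcal Deck(\tilde X,q)$.

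The generalized Borsuk-Ulam property asks for distinct $g_1,g_2$ with $f(g_1\tilde x)=f(g_2\tilde x)$, whereas the proof of part \ref{transitive, semireg=> phi: Li=1} only compares $f_1$ with $f_1\circ\delta$. We close this gap directly. Given distinct $\delta,\delta'$, the point $\tilde y=\delta(\tilde x)$ satisfies $f_1(\delta'\tilde x)=f_1(\delta'\delta^{-1}\tilde y)$. Here $\delta'\delta^{-1}\neq id$, so $\Gamma(\delta'\delta^{-1})$ moves $1$, and hence $f_1\circ\delta'\delta^{-1}=f_j$ for some $j\neq 1$. This map is coincidence-free with $f_1$ because $\hat\Phi$ lands in $F_n(Y)$. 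Therefore the representative $f_1$ of the class $[f_1]$ witnesses the failure of the property.

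The main obstacle is interpretive rather than technical: the phrase "acts regularly on $F_n(Y)$" must be read as regularity on indices. Once that reading is fixed, the rest is a direct combination of the theorem's two parts.
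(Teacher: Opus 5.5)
Your proof is correct and follows essentially the argument the paper intends; the corollary is stated without proof as an immediate consequence of Theorem \ref{transitive, semireg=> phi}. Reading regularity as regularity of $L'$ on $\{1,\ldots,n\}$, transitivity gives the first assertion via part a), and triviality of $L'_1$ gives the second via part b) with $H'=L'$ (so $H=\mathcal Deck(\tilde X,q)$); your substitution $\tilde y=\delta(\tilde x)$ just makes explicit a reduction the paper leaves implicit in its proof of part b).
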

	%
	Note 
	that, for every $i\in \{1,\ldots, n\}$, it holds by the Orbit-Stabilizer Theorem that $|\mathbb O_i|\cdot|L'_i|=|L'|>n$, where $|\mathbb O_i|\leq n$.
	Hence, in case that $|L'|=|\mathcal Deck(\tilde x,q)|>n$ it follows that $|L_i'|>1$ and consequently is not trivial.
	Thus Theorem \ref{transitive, semireg=> phi} does not help to conclude if the homotopy class $[f_i]$ does or does not satisfy the Borsuk Ulam property in respect to $\mathcal Deck(\tilde X,q)$.		
		
   In the next proposition we give some properties of a group satisfying the conditions of $H'$ of Theorem \ref{transitive, semireg=> phi}\ref{transitive, semireg=> phi: Li=1}.
   We leave the proof as an exercise for the reader. 
	\begin{prop}\label{Deckiscyclic}
		Let $G\subseteq \mathcal S_n$ be a subgroup such that the stabilizer group $G_s=\{id\}$ and let $\tau_0\in G$ be a non-trivial permutation, where $\tau_0=\tau_1\cdots \tau_l$ is the cyclic decomposition of $\tau$, with $m_j$ the length of the cycle $j=1,\ldots,l$ and labeled such that $\tau_1(s)\neq s$.
		Then the following two statements hold;
		\begin{enumerate}[a)]
			\item $m_j|m_1$ for every $j=1,\ldots,n$;
			\item if $m_i=m_j$, for every $i,j=1,\ldots,l$, then $G=\langle \tau_0 \rangle$.
		\end{enumerate}
	\end{prop}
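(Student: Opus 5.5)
The proof rests on one observation: since the stabilizer $G_s$ is trivial, an element of $G$ is determined by where it sends $s$. Concretely, the map $G\to G\cdot s$, $\tau\mapsto\tau(s)$, is injective, and by the Orbit--Stabilizer Theorem $|G|=|G\cdot s|$. Throughout I regard the cycle decomposition $\tau_0=\tau_1\cdots\tau_l$ as recording every nontrivial cycle of $\tau_0$. Fixed points, if included as cycles of length $1$, change nothing below.

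For a), observe that $s$ lies in the support of $\tau_1$, a cycle of length $m_1$. Hence $\tau_0^{m_1}(s)=\tau_1^{m_1}(s)=s$, so $\tau_0^{m_1}\in G_s=\{id\}$ and $\tau_0^{m_1}=id$. The order of $\tau_0$ is $\mathrm{lcm}(m_1,\ldots,m_l)$, so this least common multiple divides $m_1$. Since $m_1$ is itself one of the $m_j$, it follows that $\mathrm{lcm}(m_1,\ldots,m_l)=m_1$. In particular $m_j\mid m_1$ for every $j=1,\ldots,l$, and $|\langle\tau_0\rangle|=m_1$.

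For b), the plan is to compare orders. By a), $|\langle\tau_0\rangle|=m_1$. The orbit of $s$ under $\langle\tau_0\rangle$ is exactly the support of $\tau_1$, and $\langle\tau_0\rangle\subseteq G$. Therefore $G=\langle\tau_0\rangle$ holds if and only if $|G|=m_1$, that is, if and only if the $G$-orbit of $s$ coincides with the support of $\tau_1$. This is immediate when $l=1$ and $\tau_0$ is an $n$-cycle: then $G\cdot s=\{1,\ldots,n\}$, so $n\le |G|=|G\cdot s|\le n$ forces $|G|=n=m_1$.

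The main obstacle is b) when $l>1$. The hypothesis that all $m_j$ are equal does not by itself force $G\cdot s$ to equal the support of $\tau_1$. The regular group $L'=\langle(12)(34),(13)(24)\rangle$ of Example~\ref{ex Z2+Z2}.\ref{ex Z2+Z2 reg} is a counterexample: it has trivial stabilizers, and $\tau_0=(12)(34)$ has two cycles of equal length, yet $G\ne\langle\tau_0\rangle$. So I would prove b) in the corrected form: under the hypothesis that $G\cdot s$ equals the support of $\tau_1$, which includes the case where $\tau_0$ is a single $n$-cycle, the order comparison above gives $G=\langle\tau_0\rangle$. I would point out that the statement as written needs this extra assumption.
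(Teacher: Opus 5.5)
Your proof of a) is correct. Because $G_s=\{id\}$ and $\tau_0\neq id$, the point $s$ lies in a nontrivial cycle $\tau_1$, so $\tau_0^{m_1}$ fixes $s$ and must be the identity. This forces $\mathrm{lcm}(m_1,\ldots,m_l)=m_1$; the range ``$j=1,\ldots,n$'' in the statement is clearly meant to be $j=1,\ldots,l$.

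You are also right that b) is false as written, and your counterexample is valid. For the Klein group $\langle(12)(34),(13)(24)\rangle$ with $s=1$ and $\tau_0=(12)(34)$, every hypothesis holds, yet $|G|=4\neq 2=|\langle\tau_0\rangle|$. The paper gives no proof of this proposition (it is left as an exercise), so there is no argument of the authors to compare with. Your order count, $|G|=|G\cdot s|$ together with $|\langle\tau_0\rangle|=m_1$, is the natural route. It correctly proves your corrected version, which in fact does not need the equal-length hypothesis at all.

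Two remarks sharpen your discussion.

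First, the failure of b) is not an isolated accident. In a semiregular group, every nontrivial element has all its cycles of length equal to its order. So b) would force every semiregular permutation group to be cyclic, indeed generated by each of its nontrivial elements. That contradicts the regular Cayley embedding of any non-cyclic group (Proposition~\ref{NonBUmult}), for example the regular subgroup $\langle (123)(456)(789),(147)(258)(369)\rangle\cong\mathbb Z_3\oplus\mathbb Z_3$ of $\mathcal S_9$. It even fails for cyclic $G$: take $G=\langle(1234)\rangle$ and $\tau_0=(13)(24)$.

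Second, your salvage can be strengthened from ``$\tau_0$ is an $n$-cycle'' to ``$l=1$'', with fixed points allowed.
\begin{itemize}
\item For $g\in G$, the conjugate $g\tau_0g^{-1}$ is a nontrivial cycle with support $g(\mathrm{supp}\,\tau_1)$.
\item If $s\notin g(\mathrm{supp}\,\tau_1)$, this conjugate would be a nontrivial element of $G_s$, which is impossible.
\item Hence $g^{-1}(s)\in\mathrm{supp}\,\tau_1$ for all $g\in G$, i.e.\ $G\cdot s=\mathrm{supp}\,\tau_1$.
\end{itemize}
Your order comparison then gives $G=\langle\tau_0\rangle$. This is probably the closest true statement to the intended ``the group is cyclic'' conclusion.
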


\section{Nielsen number of $n$-valued non-split maps on connected closed manifolds}\label{sec:Nielsen nr of nval non-split}

	There have been numerous efforts to find ways to compute the Nielsen number of $n$-valued non-split maps.
	Two of these, which are relevant to the current paper, are \cite{staecker2021partitions} and \cite{gonccalves2017fixed}.
	In \cite[Corollary 4.7]{staecker2021partitions} a formula is given for the Nielsen number of an $n$-valued map on finite polyhedras in terms of other multivalued maps.
	In fact, it is shown that the Nielsen number of an $n$-valued map $\phi:X\stackrel{n}{\multimap} X$ is $N(\phi)=N(\psi_1)+\cdots +N(\psi_r)$,
	where $\psi_1,\ldots,\psi_r$ are the (possibly non-singular) maps appearing in Proposition \ref{Phi P-splits}.
	In \cite{gonccalves2017fixed} a method is developed to determine the Nielsen number of certain $n$-valued non-split maps on orientable, connected, closed manifolds in terms of the coincidence number of certain single-valued maps.
	Since methods for computing the coincidence number of such maps are already available in the literature, this approach provides a more practical way to obtain explicit values for the Nielsen number. 
	However, the formula for the Nielsen number given in \cite[Theorem 6]{gonccalves2017fixed} applies only to $n$-valued non-split maps with trivial stabilizer group $L'_i=\{id\}$ for any $i\in\{1,\ldots,n\}$ (see Notation \ref{notation action Li} for the definition of $L'_i$).
	Nevertheless, the authors establish several observations concerning more general cases of $n$-valued maps on these spaces, which play an important role in the arguments developed in the present work, where we expand and refine this formula. 
	We obtain a formula for the Nielsen number of every $n$-valued non-split map on connected closed manifolds (not necessarily orientable) in terms of the coincidence number of certain single-valued maps. 
	In particular, this includes $n$-valued non-split maps with non-trivial stabilizer groups. 
	Our approach builds on the framework of \cite{gonccalves2017fixed} together with the properties established in Section \ref{sec:lift factors}.

	For this purpose let $X$ be a closed connected manifold and let $\phi:X\stackrel{n}{\multimap} X$ be an $n$-valued non-split map with correspondent map $\Phi:X\rightarrow D_n(X)$.
	Let furthermore $q:\tilde X\rightarrow X$, $\hat \Phi=(f_1,\ldots,f_n):\tilde X\rightarrow X$, $L'\subseteq \mathcal S_n$, $\mathcal Deck(\tilde X,q)$, $\Gamma:\mathcal Deck(\tilde X,q)\rightarrow L'$, $L'_i$ and $\mathbb O_i$ etc. be as in Notations \ref{notation non-split} and \ref{notation action Li}.
	Consider $x_0\in X$, a fixed point of $\Phi$, and let $\tilde x_1\in q^{-1}(x_0)$ be a point of the fiber.
	Then there is an $i\in \{1,\ldots,n\}$ such that $f_i(\tilde x_1)=x_0$ and consequently $\tilde x_1\in Coin(q,f_i)$.
	Conversely, if $\tilde x_2\in Coin(q,f_i)$, for an $\{1,\ldots,n\}$, then $f_i(\tilde x_2)=q(\tilde x_2)$ is contained in $\Phi(q(\tilde x_2))$ and consequently  $q(\tilde x_2)$ is a fixed point of $\Phi$, i.e. $q(\tilde x_2)\in Fix(\Phi)$.
	 
	Hence $q:\tilde X\rightarrow X$ induces a surjection between $\bigcup_{i=1}^n Coin(q,f_i)$ and $Fix(\phi)$, by taking $\tilde x_1\in \bigcup_{i=1}^n Coin(q,f_i)$ to $q(\tilde x_1)$.
	To see that this map is not injective we look more into which points of the fiber of a fixed point are coincidence points of which lift factors.
	The next lemma, proven first in \cite{gonccalves2017fixed} and rewritten using the notation established in the current work, explores the connection between coincidence points on the fiber of a fixed point of $\phi$ and the group action by the group of deck-transformation.

		\begin{lem}\cite[Lemma 16]{gonccalves2017fixed}\label{Coini; Lij<=> Coinj}
			Let $\phi:X\stackrel{n}{\multimap}X$ be an n-valued non-split map inducing the concepts in Notations \ref{notation non-split} and \ref{notation action Li}.
			Also, let $x_0\in Fix(\phi)$, $\tilde x_1,\tilde x_2\in q^{-1}(x_0)\subset\tilde X$, $i,j\in \{1,\ldots,n\}$ and $\tilde x_1\in Coin(q,f_i)$.
			Let furthermore $\delta\in \mathcal Deck(\tilde X,q)$ be the deck-transformation such that $\delta(\tilde x_2)=\tilde x_1$ and $\tau=\Gamma(\delta)$.
			Then $\tilde x_2\in Coin(q,f_j)$, if and only if, $\tau(i)=j$.
		\end{lem}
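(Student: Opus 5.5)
The argument only uses the two defining properties of the deck transformation $\delta$ together with Proposition \ref{fsigmak=fdeltak}. The first property is $q\circ\delta=q$, which holds because $\delta$ is a deck transformation. The second is that the components of $\hat\Phi(\tilde x)$ are pairwise distinct for every $\tilde x\in\tilde X$, because $\hat\Phi$ takes values in $F_n(X)$.

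First, rewrite the hypothesis. The assumption $\tilde x_1\in Coin(q,f_i)$ means $f_i(\tilde x_1)=q(\tilde x_1)=x_0$. Substituting $\tilde x_1=\delta(\tilde x_2)$ gives $(f_i\circ\delta)(\tilde x_2)=x_0$. By Proposition \ref{fsigmak=fdeltak} with $k=1$, $f_i\circ\delta=f_{\tau(i)}$, so $f_{\tau(i)}(\tilde x_2)=x_0$. Also $q(\tilde x_2)=x_0$, since $\tilde x_2$ lies in the fiber $q^{-1}(x_0)$.

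For the reverse implication, suppose $\tau(i)=j$. Then $f_j(\tilde x_2)=f_{\tau(i)}(\tilde x_2)=x_0=q(\tilde x_2)$, so $\tilde x_2\in Coin(q,f_j)$.

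For the forward implication, suppose $\tilde x_2\in Coin(q,f_j)$. Then $f_j(\tilde x_2)=q(\tilde x_2)=x_0=f_{\tau(i)}(\tilde x_2)$. The point $\hat\Phi(\tilde x_2)=(f_1(\tilde x_2),\ldots,f_n(\tilde x_2))$ lies in $F_n(X)$, so its coordinates are pairwise distinct. Since the coordinates with indices $j$ and $\tau(i)$ agree, the indices must be equal: $j=\tau(i)$.

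The only point needing care is the direction of the equivariance. One must check that $f_i\circ\delta=f_{\tau(i)}$, as in Proposition \ref{fsigmak=fdeltak}, rather than $f_{\tau^{-1}(i)}$, with $\delta$ oriented so that $\delta(\tilde x_2)=\tilde x_1$ as in the statement. With the paper's convention $\tau\cdot(y_1,\ldots,y_n)=(y_{\tau(1)},\ldots,y_{\tau(n)})$ and Equation \eqref{Phidelta=sigmaPhi}, this is exactly the form obtained.
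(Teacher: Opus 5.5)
Your proof is correct and is essentially the paper's own argument. You apply Proposition~\ref{fsigmak=fdeltak} to get $f_{\tau(i)}(\tilde x_2)=f_i(\delta(\tilde x_2))=f_i(\tilde x_1)=x_0$, and then use that distinct lift factors are coincidence-free (the coordinates of $\hat\Phi(\tilde x_2)\in F_n(X)$ are pairwise distinct) to conclude $f_j(\tilde x_2)=x_0$ if and only if $j=\tau(i)$; splitting this into the two implications is just a more explicit write-up of the same reasoning.
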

		\begin{proof}
			By hypothesis and Proposition \ref{fsigmak=fdeltak} it holds that $f_{\tau(i)}(\tilde x_2)=f_i(\delta(\tilde x_2))=f_i(\tilde x_1)=q(\tilde x_1)=x_0$.
			Note furthermore that, since $f_{\tau(i)}$ and $f_j$ are lift factors, they are either coincidence-free or $\tau(i)$ is equal to $j$.
			Therefore $f_j(\tilde x_2)=x_0$, if only if, $\tau(i)=j$.
		\end{proof}
	
		By this lemma, for every coincidence point $\tilde x_1\in  Coin(q,f_i)$, every point of the orbit of $\tilde x_1$ by $\mathcal Deck(\tilde X, q)$ is a coincidence point of q and a certain lift factor. 
		Therefore $q$ takes the whole orbit of $\tilde x_1$ to the same point, hence the map induced by $q$ between $\bigcup_{i=1}^n Coin(q,f_i)$ and $Fix(\phi)$ is not injective.
		To be able to compare the Nielsen fixed point number of $\phi$ to the coincidence number of certain lift factors and $q$ it would be nice to find a bijection between fixed points and coincidence points.
		One way of "transforming" this map to a bijection is to "stop to consider" some of these coincidence points.
		Considering the group action $L'$ instead of $\mathcal Deck(\tilde X,q)$, this lemma implies that if $i\in \{1,\ldots,n\}$ and $j\in \mathbb O_i$ is in the orbit of $i$ under $L'$, then $Coin(q,f_i)=Coin(q,f_j)$.
		Hence we only have to consider the coincidence points of lift factors with indices in different orbits.
		For simplicity sake we take the smallest number of each orbit, i.e. $\{f_i: i\in \mathbb I_0\}$.
		Then $q$ induces a bijection from $\bigcup_{i\in \mathbb I_0} Coin(q,f_i)$ to $Fix(\phi)$, in case that there are only trivial stabilizer groups, i.e. $L_i=\{id\}$ for every $i\in \mathbb I_0$.
		
		That leaves us with the case where, for an $i\in \mathbb I_0$, there is a non-trivial stabilizer group $L'_i$.
		In this case, there is a non-trivial permutation $\tau \in L_i$, i.e. $\tau(i)=i$ and $\tau\neq id$.
		Let $\delta=\Gamma^{-1}(\tau)$, $\tilde x_1\in Coin(q,f_i)$ and set $\tilde x_2=\delta(\tilde x_1)$.
		Then it follows, by Lemma \ref{Coini; Lij<=> Coinj}, that $\tilde x_2$ is also a coincidence point of $f_i$ and $q$, i.e. $\tilde x_2\in Coin(q,f_i)$.
		Since both $\tilde x_1$ and $\tilde x_2$ are mapped by $q$ to $ x_0\in Fix(\phi)$, it follows that the map from $\bigcup_{i\in \mathbb I_0} Coin(q,f_i)$ to $Fix(\phi)$ induced by $q$ is not injective.
		This can be avoided if we instead consider the quotient space $\tilde X/L_i $ and the maps $q_i,\bar f_i:X/L_i \rightarrow X$, which are induced by $q$ and $f_i$ respectively (see Notation \ref{notation action Li}).
		Then the projection $p_i:\tilde X\rightarrow \tilde X/L_i$ takes $\tilde x_1$ to its equivalence class $[\tilde x_1]\in \tilde X/L_i$.
		Note that by Lemma \ref{Coini; Lij<=> Coinj} the projection $p_i$ takes all points in $q^{-1}(x_0)\cap Coin(q,f_i)$ to $[\tilde x_1]$.
		That is, the points in the fiber of $x_0$, which are contained in the pre-image $f_i^{-1}(x_0)$ are mapped by $p_i$ to $[\tilde x_1]$, in particular $p_i(\tilde x_2)=[\tilde x_1]$.
		\begin{equation*}
			\begin{tikzcd}[column sep=huge]
				\tilde x_1 \arrow[d,mapsto]&[-11ex] \in&[-12ex]\tilde X\arrow[d,"p_i"]\arrow[dd,bend left=60, "q"]\\
				\left[\tilde x_1\right]\arrow[d,mapsto]&[-11ex] \in &[-12ex]\tilde X/L_i \arrow[d,"q_i"]\\
				x_0&[-11ex] \in &[-12ex]X	
			\end{tikzcd}
		\end{equation*}			
		Using these observations we define a map $q_{Coin}: \bigcup_{i\in \mathbb I_0} Coin(q_i,\bar f_i)\rightarrow Fix(\phi)$, taking an equivalence class $[\tilde x_1]\in Coin(q_i,\bar f_i)\subset \tilde X/L_i$ to $q_i([\tilde x_1])=q(\tilde x_1)$.
		In the next proposition we show that $q_{Coin}$ is well-defined and bijective.			
	\color{black}

	\begin{prop}\label{qCoin is a bijection}
		Let $\phi:X\stackrel{n}{\multimap} X$ be an $n$-valued non-split map with correspondent map $\Phi:X\rightarrow D_n({X})$ inducing the structure summarized in Notations \ref{notation non-split} and \ref{notation action Li}.
		Then the map $q_{Coin}:\bigcup_{i\in \mathbb I_0} Coin(q_i,\bar f_i)\rightarrow Fix(\phi)$, taking an equivalence class $[\tilde x_1]\in Coin(q_i,\bar f_i)$ to $q_i([\tilde x_1])=q(\tilde x_1)$, is a bijection.
	\end{prop}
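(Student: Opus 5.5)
The plan is to verify three things in turn: that $q_{Coin}$ is well defined, that it is surjective, and that it is injective. All three will follow from Proposition \ref{fsigmak=fdeltak} and Lemma \ref{Coini; Lij<=> Coinj}.

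\emph{Well-definedness.} Fix $i\in\mathbb I_0$ and a class $[\tilde x_1]\in Coin(q_i,\bar f_i)\subseteq \tilde X/L_i$. Because $q=q_i\circ p_i$ and $f_i=\bar f_i\circ p_i$, every representative $\tilde x_1$ of the class satisfies $q(\tilde x_1)=f_i(\tilde x_1)$. This point lies in $\Phi(q(\tilde x_1))$, so $q(\tilde x_1)\in Fix(\phi)$. The value $q_i([\tilde x_1])$ does not depend on the representative. Hence $q_{Coin}$ is a well-defined map on each piece $Coin(q_i,\bar f_i)$. Since the domains $\tilde X/L_i$ for different $i\in\mathbb I_0$ are different spaces, I will read the union as a disjoint union, and on that union the map is defined piecewise.

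\emph{Surjectivity.} Let $x_0\in Fix(\phi)$ and choose $\tilde x\in q^{-1}(x_0)$. Some lift factor satisfies $f_j(\tilde x)=x_0$. Let $i\in\mathbb I_0$ be the minimal element of the orbit $\mathbb O_j$, so there is $\tau\in L'$ with $\tau(i)=j$. Put $\delta=\Gamma^{-1}(\tau)$ and $\tilde x_1=\delta^{-1}(\tilde x)$. Proposition \ref{fsigmak=fdeltak} gives
\[
f_i(\delta^{-1}\tilde x)=f_{\tau^{-1}(i)}\dots
\]
but it is cleaner to argue as follows: $f_i\circ\delta^{-1}=f_{\tau^{-1}(i)}$, and we want instead $f_i(\tilde x_1)=f_j(\tilde x)$. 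Taking $\tilde x_1=\delta(\tilde x)$ and applying $f_j=f_{\tau(i)}=f_i\circ\delta$ gives $f_i(\tilde x_1)=f_j(\tilde x)=x_0=q(\tilde x_1)$. So $\tilde x_1\in Coin(q,f_i)$, hence $p_i(\tilde x_1)\in Coin(q_i,\bar f_i)$, and this class maps to $x_0$.

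\emph{Injectivity.} This is the main step. Suppose $[\tilde x_1]\in Coin(q_i,\bar f_i)$ and $[\tilde x_2]\in Coin(q_j,\bar f_k)$, with $i,k\in\mathbb I_0$, both map to $x_0$. Then $\tilde x_1,\tilde x_2\in q^{-1}(x_0)$. The covering $q$ is normal, so there is $\delta$ with $\delta(\tilde x_2)=\tilde x_1$; set $\tau=\Gamma(\delta)$. Lemma \ref{Coini; Lij<=> Coinj} gives $\tilde x_2\in Coin(q,f_{\tau(i)})$. Two distinct lift factors are coincidence-free, while $f_{\tau(i)}(\tilde x_2)=x_0=f_k(\tilde x_2)$; hence $\tau(i)=k$. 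So $i$ and $k$ lie in the same $L'$-orbit, and since both are orbit minima, $k=i$. Then $\tau(i)=i$, so $\tau\in L'_i$ and $\delta\in L_i$. Therefore $\tilde x_1=\delta(\tilde x_2)$ gives $p_i(\tilde x_1)=p_i(\tilde x_2)$, that is, $[\tilde x_1]=[\tilde x_2]$. The only delicate points are remembering that coincidence-freeness of lift factors (the image of $\hat\Phi$ lies in $F_n(X)$) forces the equality of indices, and that the normality of $q$ supplies the deck transformation $\delta$.
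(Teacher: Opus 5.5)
Your proof is correct and follows the same three-step structure as the paper's proof: well-definedness, surjectivity, and injectivity via Lemma~\ref{Coini; Lij<=> Coinj} together with the fact that $\mathbb I_0$ contains exactly one element of each $L'$-orbit. Your surjectivity step is more careful than the paper's, which picks an arbitrary $i\in\{1,\ldots,n\}$ with $f_i(\tilde x_1)=x_0$ and never checks $i\in\mathbb I_0$, whereas you pass to the orbit minimum via $\tilde x_1=\delta(\tilde x)$; in a write-up, delete the abandoned $\delta^{-1}$ computation and correct $Coin(q_j,\bar f_k)$ to $Coin(q_k,\bar f_k)$.
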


		\begin{proof}
			To show that $q_{Coin}$ is well-defined let $[\tilde x_1]\in  \bigcup_{i\in \mathbb I_0} Coin(q_i,\bar f_i)$ and let $\tilde x_2\in [\tilde x_1]$.
			Then there exists an $i\in \mathbb I_0$, such that $[\tilde x_1]\in Coin(q_i,\bar f_i)$, and a deck-transformation $\delta\in L_i$ such that $\delta(\tilde x_1)=\tilde x_2$.
			Therefore $q_{Coin}([\tilde x_1])=q_i([\tilde x_1])=q(\tilde x_1)=q(\tilde x_2)=q_i([\tilde x_2])=q_{Coin}([\tilde x_2])$.
			It also holds that $q_{Coin}$ associates every equivalent class $[\tilde x_1]\in Coin(q_i,\bar f_i)$, for any $i\in \mathbb I_0$, to a fixed point of $\phi$.
			In fact $q_{Coin}([\tilde x_1])=q_i([\tilde x_1])=q(\tilde x_1)$, which is a fixed point of $\phi$, since $q(\tilde x_1)=\bar f_i([\tilde x_1])=f_i(\tilde x_1)\in \phi(q(\tilde x_1)) $.
			Hence $q_{Coin}$ is well-defined.
		
			To show that $q_{Coin}$ is surjective let $x_0\in Fix (\phi)$, i.e. $x_0\in \phi(x_0)$, and take a point in it's fiber $\tilde x_1\in q^{-1}(x_0)$.
			Then, since $\hat \Phi=\{f_1,\ldots,f_n\}$ is the lift of $\Phi\circ q$, there exists $i\in \{1,\ldots,n\}$ such that $f_i(\tilde x_1)=x_0$.
			Hence $q_i(p_i(\tilde x_1))=q(\tilde x_1)=x_0$ and $\bar f_i(p_i(\tilde x_1))=f_i(\tilde x_1)=x_0$ and consequently $[\tilde x_1]=p_i(\tilde x_1)\in Coin(q_i, \bar f_i)$ and $q_{Coin}$ is surjective.
			
			To show that $q_{Coin}$ is injective suppose that $x_0\in Fix(\phi)$ and $[\tilde x_1],[\tilde x_2] \in \bigcup_{i\in \mathbb I_0} Coin(q_i,\bar f_i)$ such that
			$q_{Coin}([\tilde x_1])=q_{Coin}([\tilde x_2])=x_0$
			Then there are $j,k\in \mathbb I_0$ such that $[\tilde x_1]\in Coin( q_{j},\bar f_{j})$ and $[\tilde x_2]\in Coin(q_{k},\bar f_{k})$ and consequently
			$\tilde x_1\in Coin(q,f_{j})$ and $\tilde x_2\in Coin(q, f_{k})$.
			Since $\tilde x_1$ and $\tilde x_2$ are in the same fiber,
			there exists a deck-transformation $\delta\in \mathcal Deck(\tilde X, q)$ such that $\delta(\tilde x_2)=\tilde x_1$.
			By Lemma \ref{Coini; Lij<=> Coinj} it holds that $\Gamma(\delta)$ takes $j$ to $k$.
			By definition of $\mathbb I_0$ it contains only one element of each orbit.
			Hence $j=k$ and $\delta\in L_j$, consequently $[\tilde x_1]=[\tilde x_2]$ in $\tilde X/L_j$.
	\end{proof}
	
	\begin{prop}\label{coincidence class bij fixed point class}
		Let $i\in \mathbb I_0$ and $[\tilde x_1]\in Coin(q_i,\bar f_i)$, then $q_{Coin}$ maps the Nielsen coincidence class of $[\tilde x_1]$ bijectively to the Nielsen fixed point class of $x_0=q_i([\tilde x_1])$.
	\end{prop}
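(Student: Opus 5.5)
Since Proposition \ref{qCoin is a bijection} already shows that $q_{Coin}$ is a bijection on the level of points, the plan is to show that two coincidence points of $(q_i,\bar f_i)$ lie in the same Nielsen coincidence class exactly when their images lie in the same Nielsen fixed point class of $\phi$. We use the path description of both equivalences. Two points $[\tilde x_1],[\tilde x_2]\in Coin(q_i,\bar f_i)$ are Nielsen coincidence equivalent if there is a path $\gamma$ in $\tilde X/L_i$ from $[\tilde x_1]$ to $[\tilde x_2]$ with $q_i\circ\gamma\simeq \bar f_i\circ\gamma$ rel endpoints. Following Schirmer, two fixed points $x_0,x_0'$ of $\phi$ are equivalent if there is a path $c$ from $x_0$ to $x_0'$ and an arc $g_c$ of $\phi\circ c$ (a continuous choice $g_c(t)\in\phi(c(t))$, which exists by the splitting of $\Phi$ along paths) from $x_0$ to $x_0'$ with $c\simeq g_c$ rel endpoints.

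\emph{Forward direction.} Given $\gamma$ as above, lift it to $\tilde\gamma$ in $\tilde X$ through the covering $p_i$, starting at $\tilde x_1$, and set $c=q\circ\tilde\gamma=q_i\circ\gamma$. Then $f_i\circ\tilde\gamma=\bar f_i\circ\gamma$ is an arc of $\phi\circ c$, because $\hat\Phi$ lifts $\Phi\circ q$. The homotopy $q_i\circ\gamma\simeq\bar f_i\circ\gamma$ is exactly the homotopy $c\simeq g_c$, so $q_{Coin}([\tilde x_1])$ and $q_{Coin}([\tilde x_2])$ lie in the same fixed point class. Hence $q_{Coin}$ maps the coincidence class of $[\tilde x_1]$ into the fixed point class of $x_0$.

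\emph{Converse, the main step.} Let $x_0'$ be equivalent to $x_0$ via $(c,g_c)$. Lift $c$ to $\tilde c$ in $\tilde X$ starting at $\tilde x_1$. Since $\hat\Phi$ lifts $\Phi\circ q$ and the $f_j$ are pairwise coincidence-free, the arc $g_c$ with $g_c(0)=x_0=f_i(\tilde x_1)$ must equal $f_i\circ\tilde c$ by uniqueness of lifts for the covering $F_n(X)\to D_n(X)$. Then $\tilde x_2:=\tilde c(1)$ satisfies $f_i(\tilde x_2)=g_c(1)=x_0'=q(\tilde x_2)$, so $[\tilde x_2]\in Coin(q_i,\bar f_i)$ with $q_{Coin}([\tilde x_2])=x_0'$. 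The path $\gamma=p_i\circ\tilde c$ then satisfies $q_i\circ\gamma=c\simeq g_c=\bar f_i\circ\gamma$ rel endpoints, so $[\tilde x_2]$ lies in the coincidence class of $[\tilde x_1]$. By injectivity of $q_{Coin}$ (Proposition \ref{qCoin is a bijection}), $[\tilde x_2]$ is the unique preimage of $x_0'$, which gives surjectivity onto the fixed point class.

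The delicate point is the uniqueness of the arc: the arc $g_c$ starting at $x_0$ must be identified with $f_i\circ\tilde c$. This uses that at $\tilde x_1$ only the index $i$ satisfies $f_i(\tilde x_1)=x_0$. We must also check that the notion of fixed point class used in the paper (following \cite{gonccalves2017fixed} or Schirmer) matches this arc formulation; if it does not, we will translate via Lemma \ref{Coini; Lij<=> Coinj}.
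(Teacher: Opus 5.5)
Your proof is correct, but it takes a different route from the paper.

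\textbf{The paper's route.} The paper works on $\tilde X$. It takes the Nielsen coincidence class $\tilde C$ of $\tilde x_1$ for the pair $(q,f_i)$ and cites \cite[Lemma 18]{gonccalves2017fixed} to get that $q$ maps $\tilde C$ onto the fixed point class $C$ of $x_0$. It then asserts, without argument, that $p_i(\tilde C)$ is a Nielsen coincidence class of $(q_i,\bar f_i)$. Finally it concludes with the pointwise bijectivity of $q_{Coin}$ from Proposition~\ref{qCoin is a bijection}.

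\textbf{Your route.} You work directly on $\tilde X/L_i$, using the path descriptions of both kinds of classes. Your forward step lifts $\gamma$ through $p_i$ and reads $\bar f_i\circ\gamma=f_i\circ\tilde\gamma$ as an arc of $\phi\circ c$. This is what shows that pushing down and lifting through $p_i$ preserve equivalence, so $p_i(\tilde C)$ is a whole class and not merely contained in one. That is exactly the step the paper leaves unjustified. Your converse step lifts $c$ through $q$ and identifies the arc $g_c$ with $f_i\circ\tilde c$. This is essentially a proof of the cited lemma.

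\textbf{Comparison.} The paper's proof is shorter because it outsources the path lifting to \cite{gonccalves2017fixed}. Yours is self-contained and never needs Lemma~\ref{Coini; Lij<=> Coinj}.

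\textbf{Your delicate point holds as stated.} Along a path $c$ with lift $\tilde c$, the arcs of $\phi\circ c$ are exactly the coordinates $f_1\circ\tilde c,\ldots,f_n\circ\tilde c$ of the lift $\hat\Phi\circ\tilde c$ of $\Phi\circ c$. Any splitting differs from this one only by a permutation. These coordinates are pointwise distinct, so the unique one passing through $x_0$ at $t=0$ is $f_i\circ\tilde c$.

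\textbf{On the definition.} The arc formulation you use is Schirmer's definition of fixed point classes \cite{schirmer1984index}, the one the paper implicitly relies on through \cite{gonccalves2017fixed}. So your closing hedge about translating definitions is unnecessary.
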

	\begin{proof}		
		Let $C$ be a fixed point class of $\phi$, let $x_0\in C$ and $[\tilde x_1]=q_{Coin}^{-1}(x_0)$, which is contained in $Coin(q_i, \bar f_i)$, for some $i\in \mathbb I_0$.
		Let $\tilde C$ be the Nielsen coincidence class of $\tilde x_1$ of the pair $(q,f_i)$,
		then $q$ maps $\tilde C$ surjectively to $C$ by \cite[Lemma 18]{gonccalves2017fixed}.
		It holds that $p_i(\tilde C)$ is a Nielsen coincidence class of the pair $(q_i, \bar f_i)$ and $q_i$ maps $p_i(\tilde C)$ surjectively to $C$.
		Since $q_{Coin}$ is, by Proposition \ref{qCoin is a bijection}, a bijection.
	\end{proof}

		Note that, for any $i\in \{1,\ldots,n\}$, it holds that the covering space maps $q:\tilde X\rightarrow X$, $p_i:\tilde X\rightarrow \tilde X/L_i$ and $q_i:\tilde X/L_i\rightarrow X$ are orientation-true (see for example \cite{gonccalves2002roots}).
		Recall that for two connected manifolds $\tilde X, X$, a map $g:\tilde X\rightarrow X$ is {\it orientation-true}, if for any loop $\gamma:I\rightarrow \tilde X$ the loop $g\circ \gamma:I\rightarrow X$ preserves (reverses) orientation, if and only if, $\gamma$ preserves (reverses) orientation (see \cite[page 406]{brown2005handbook}).
		Consider two maps $f, g: \tilde X\rightarrow X$ with $g$ orientation-true, then the coincidence index for the pair $(g,f)$ of an isolated coincidence point is an integer defined as the local index of the pair $(g,f)$, (for more details see \cite[Definition 5.1.]{gonccalves1997lefschetz}).
		The map $g$ being orientation-true guarantees that there is a consistent global choice of local orientations.
	\color{black}

	\begin{prop}\label{index fix=coin}
		Let $X$ be a closed connected (not necessarily orientable) manifold, $\phi : X \multimap X$ be an $n$-valued non-split map with correspondent map $\Phi : X \to D_n(X)$ and $\hat{\Phi} = \{f_1, \ldots, f_n\} : \tilde{X} \to F_n(X)$ be a lift of $q \circ \Phi$ (see Notation \ref{notation non-split}).
		Let furthermore $x_0 \in Fix(\phi)$ be an isolated fixed point of $\phi$, let $[\tilde{x}_1]=q_{Coin}^{-1}(x_0)$ and let $i \in \mathbb I_0$ be such that $[\tilde{x}_1]\in Coin(q_i, \bar f_i)$.
		Then the fixed point index of $\phi$ at $x_0$ equals the coincidence index of the pair $(q_i, \bar f_j)$ at $[\tilde{x}_1]$.
		Moreover the Nielsen fixed point class of $x_0$ is essential, if and only if, the Nielsen coincidence class of $[\tilde{x}_1]$ of the pair $(q_i, \bar f_i)$ is essential.
	\end{prop}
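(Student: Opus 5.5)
The plan is to reduce both statements to local computations on evenly covered neighbourhoods, using that all maps in play are covering maps and hence local homeomorphisms. (The statement has a typo: $(q_i,\bar f_j)$ should be $(q_i,\bar f_i)$.)

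\textbf{Step 1: a local split of $\phi$.} Since $x_0$ is isolated, choose a small open ball $U\ni x_0$ that is evenly covered by $q_i:\tilde X/L_i\rightarrow X$ and contains no other fixed point of $\phi$. Let $\tilde U$ be the sheet over $U$ that contains $[\tilde x_1]$, and let $s=(q_i|_{\tilde U})^{-1}:U\rightarrow \tilde U$ be the local section.

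On $U$ (shrinking $U$ if necessary, using simple connectivity) the map $\Phi$ splits locally. The value of $\phi$ near $x_0$ that passes through $x_0$ is the single-valued map $g=\bar f_i\circ s:U\rightarrow X$. Indeed $\bar f_i\circ s(x)=f_i(\tilde x)$ for any lift $\tilde x\in p_i^{-1}(s(x))$, and $f_i(\tilde x)\in\phi(q(\tilde x))=\phi(x)$. Moreover $g(x_0)=x_0$.

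The remaining local values of $\phi$ are the other $f_j\circ(q|_{\tilde U'})^{-1}$, and these stay away from the diagonal near $x_0$ because the lift factors are pairwise coincidence-free. By the definition of the index of an $n$-valued map (Schirmer), or equivalently by localization and additivity, the fixed point index of $\phi$ at $x_0$ therefore equals the single-valued index $\mathrm{ind}(g,x_0)$.

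\textbf{Step 2: comparing the two local indices.} Compare $\mathrm{ind}(g,x_0)$ with the coincidence index of $(q_i,\bar f_i)$ at $[\tilde x_1]$. On $\tilde U$ we have $\bar f_i=g\circ q_i$, with $q_i|_{\tilde U}$ a homeomorphism onto $U$. The coincidence index of $(q_i,\, g\circ q_i)$ at $[\tilde x_1]$ is computed from the local degree of $\tilde y\mapsto q_i(\tilde y)-g(q_i(\tilde y))$ in a chart. This is the composition of the fixed-point displacement $x\mapsto x-g(x)$ with the homeomorphism $q_i|_{\tilde U}$.

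The only subtlety, and the main obstacle, is orientation in the non-orientable case. The coincidence index of \cite{gonccalves1997lefschetz} requires orienting $\tilde U$ and $U$ compatibly via the orientation-true map $q_i$. I would choose the local orientation on $\tilde U$ to be the pullback through $q_i$ of a local orientation at $x_0$. Then the degree contributed by $q_i|_{\tilde U}$ is $+1$, and the same local orientation is used on source and target of the displacement map. Hence the index equals $\mathrm{ind}(g,x_0)$, independently of the orientation choice, as for fixed point indices on non-orientable manifolds.

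\textbf{Step 3: essentiality of classes.} By Proposition \ref{coincidence class bij fixed point class}, $q_{Coin}$ restricts to a bijection between the Nielsen coincidence class $\tilde C$ of $[\tilde x_1]$ and the fixed point class $C$ of $x_0$.

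After a homotopy, all fixed points in $C$ are isolated and finite. A homotopy of $\phi$ lifts to homotopies of $q_i$ and $\bar f_i$ via the covering homotopy property, and the lifted homotopy preserves the correspondence of classes. Applying Step 2 pointwise and summing over the finitely many points, $\mathrm{ind}(\phi,C)=\mathrm{ind}(q_i,\bar f_i;\tilde C)$. (For non-orientable manifolds, read ``essential'' in the usual sense of a nonzero, possibly semi-index, total index.) Hence one class is essential if and only if the other is.
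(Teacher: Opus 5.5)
Your proposal is correct and follows essentially the paper's route. You split $\phi$ locally near $x_0$ and identify the branch through $x_0$ as $\bar f_i\circ(q_i|_{\tilde V})^{-1}$, so that $\mathrm{ind}(\phi,U)=\mathrm{ind}(h_i,U)$ equals the local coincidence index of $(q_i,\bar f_i)$ with the orientation pulled back along the orientation-true covering $q_i$; working directly with a sheet of $q_i$ instead of going through $q$ and $p_i$ is cosmetic, and you correctly spot the typo $\bar f_j\to\bar f_i$. Your Step 3 spells out, via Proposition \ref{coincidence class bij fixed point class} and summing pointwise indices, what the paper dismisses as ``follows from the first''; there the compatibility of the class correspondence with the homotopy is asserted rather than checked, and the semi-index caveat is unnecessary because $q_i$ is orientation-true and the coincidence index is an integer.
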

	\begin{proof}
		Since $[\tilde{x}_1]=q_{Coin}^{-1}(x_0)$, it holds, by definition of $q_{Coin}$ and $q_i$, that $x_0=q_{Coin}([\tilde x_1])=q_i([\tilde x_1])=q(\tilde x_1)$.
		As $x_0$ is an isolated fixed point and $X$ is a manifold, there exist contractible neighborhoods $U\subset X$ of $x_0$ and $\tilde U\subseteq \tilde X$ of $\tilde x_1$, such that $Fix(\phi)\cap U=\{x_0\}$, $q^{-1}(x_0)\cap \tilde U=\{\tilde x_1\}$ and $ q_{|\tilde U}:\tilde U\rightarrow U$ is a homeomorphism.
		It holds that $U$ satisfies the conditions of the Splitting Lemma (see \cite[Lemma 1]{schirmer1984fix}) and therefore there are maps $h_1,\ldots,h_n:U\rightarrow X$ such that $\Phi_{|U}=\{h_1,\ldots,h_n\}:U\rightarrow D_n(X)$.
		Consequently $\{h_1,\ldots,h_n\}=\{f_1,\ldots,f_n\}\circ q_{|\tilde U}^{-1}$. 
		Without loss of generality, suppose we labeled $h_1,\ldots, h_n$ such that $h_i=f_i\circ q_{|\tilde U}^{-1}$.
		It holds that $f_i=\bar f_i \circ p_i$ and $q=q_i\circ p_i$
		(see commuting Diagram \eqref{bar qi & bar fi}), therefore setting $\tilde V=p_i(\tilde U)$ we obtain that $h_i=\bar f_i\circ (q_i)_{|\tilde V}^{-1}$.
		Since $[\tilde{x}_1 ]\in Coin(q_i, \bar f_i)$ it follows that $x_0$ is a fixed point of $h_i$.
		By definition of the fixed point index it holds
		\begin{equation*}
			ind(\phi, U)=ind(h_i,U)=ind(\bar f_i\circ ((q_i)_{|\tilde V})^{-1}, U),
		\end{equation*}
		which is exactly the local coincidence index of the pair $(q_i,\bar f_i)$ in $\tilde V$, where the local orientation in $\tilde V$ is determined by the local homeomorphism $(q_i)_{|\tilde V}$.
		Since $q_i: \tilde X/L_i\rightarrow X$ is a covering map it is orientation-true, thus the local index extends globally to $\tilde X/L_i$ (see \cite[Definition 5.1.]{gonccalves1997lefschetz}).
		The second statement follows from the first.
	\end{proof}

	We now proceed to the proof of the primary result in this section, for a recollection of the notation and different concept used, see Notation \ref{notation non-split} and \ref{notation action Li}.
	
	\begin{theo}\label{Nielsennumber non-orientable}
		Let $X$ be a connected compact manifold without boundary (orientable or non-orientable), and let $\phi:X\stackrel{n}{\multimap} X$ be an $n$-valued non-split map with correspondent map $\Phi:X\rightarrow D_n(X)$.
		Let $q:\tilde X\rightarrow X$ be the covering space induced by $\Phi^{-1}(P_n(X))$, let $\hat{\Phi}=\{f_1,\ldots,f_n\}:\tilde{X}\rightarrow F_n(X)$ be a lift of $\Phi\circ q$.
		Let $L'$ be as in Notation \ref{notation non-split} and $\mathbb I_0$, $L_{i}'$, $ L_i$ and $q_i, \bar f_i:\tilde X/L_i\rightarrow X$ as in Notation \ref{notation action Li}.
		Then the Nielsen number of $\phi$ is equal to
		\begin{equation*}
			N(\phi) = \sum_{i\in  \mathbb I_0}N(q_i,\bar f_{i}).
		\end{equation*}
	\end{theo}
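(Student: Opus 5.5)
The plan is to build a bijection between essential Nielsen fixed point classes of $\phi$ and essential Nielsen coincidence classes of the pairs $(q_i,\bar f_i)$, $i\in\mathbb I_0$, using the three preceding propositions. Throughout we use the standard convention that $N(\phi)$ is the number of essential fixed point classes of $\phi$, and $N(q_i,\bar f_i)$ the number of essential coincidence classes of the pair. First reduce to the case where $Fix(\phi)$ consists of finitely many isolated points. Fixed point classes, their indices and essentiality are homotopy invariant, and so is $N(\phi)$. Moreover, a homotopy of $\Phi$ lifts to a homotopy of $\hat\Phi$, hence of every $f_i$ and (by equivariance) of every $\bar f_i$. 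The covering data $q$, $L'$, $L_i$ and $\mathbb I_0$ depend only on $\Phi_\#$ and therefore do not change. So both sides of the formula are invariant, and we may assume $\phi$ has isolated fixed points, as in Schirmer's theory.

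Next, partition $Fix(\phi)$ into its Nielsen classes $C_1,\ldots,C_m$. By Proposition \ref{qCoin is a bijection}, the map $q_{Coin}$ is a bijection from $\bigsqcup_{i\in\mathbb I_0}Coin(q_i,\bar f_i)$ onto $Fix(\phi)$; here the union is disjoint because the domains $\tilde X/L_i$ are different spaces. By Proposition \ref{coincidence class bij fixed point class}, $q_{Coin}$ carries each Nielsen coincidence class of some pair $(q_i,\bar f_i)$ bijectively onto a Nielsen fixed point class of $\phi$. Consequently the preimage of each $C_k$ is exactly one coincidence class, lying in exactly one pair $(q_i,\bar f_i)$. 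Since $q_{Coin}$ is a bijection, distinct coincidence classes, whether in the same pair or in different pairs, go to distinct fixed classes. This gives a bijection between the set of all Nielsen coincidence classes of the pairs $(q_i,\bar f_i)$, $i\in\mathbb I_0$, and the set of fixed point classes of $\phi$.

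Then compare indices. By Proposition \ref{index fix=coin}, at each isolated $x_0$ the fixed point index of $\phi$ equals the coincidence index of $(q_i,\bar f_i)$ at $q_{Coin}^{-1}(x_0)$. Here orientation-trueness of $q_i$ gives a consistent global choice of local orientations, which is what makes the statement valid in the non-orientable case. Summing over a class, the index of $C_k$ equals the index of the corresponding coincidence class. Hence essential classes correspond to essential classes, the second statement of Proposition \ref{index fix=coin}, and counting gives $N(\phi)=\sum_{i\in\mathbb I_0}N(q_i,\bar f_i)$.

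The step I expect to be the main obstacle is the non-orientable case. There the coincidence index of $(q_i,\bar f_i)$ may only be defined up to sign or in a semi-index or $\mathbb Z_2$ sense, so essentiality has to be read with the convention of \cite{gonccalves1997lefschetz}. One must check that this convention agrees with the notion of essentiality used for $\phi$, and that finite coincidence classes with isolated points have a well-defined, additive index. I would handle this by taking the definition of essential coincidence classes for orientation-true maps from \cite{gonccalves1997lefschetz}. With that definition, Proposition \ref{index fix=coin} gives that the local indices agree point by point with a coherent orientation choice transported along the class, so additivity over each class holds on both sides.
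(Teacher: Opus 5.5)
Your proposal is correct and follows the paper's proof. Like the paper, you use the bijection $q_{Coin}$ (Proposition \ref{qCoin is a bijection}), its class-to-class correspondence (Proposition \ref{coincidence class bij fixed point class}), and the pointwise index equality of Proposition \ref{index fix=coin} to match essential classes one-to-one. Your preliminary reduction to isolated fixed points, via the equivariant lift of a homotopy and fix-finite approximation, is left implicit in the paper, but it is sound and supplies the isolatedness hypothesis needed to apply Proposition \ref{index fix=coin}.
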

	\begin{proof}
		By Proposition \ref{coincidence class bij fixed point class}  $q_{Coin}: \bigcup_{i\in \mathbb I_0} Coin(q_i,\bar f_i)   \rightarrow Fix(\phi)$ is a bijection, that takes, by Proposition \ref{qCoin is a bijection} every Nielsen coincidence class bijectively to a fixed point class.
		Proposition \ref{index fix=coin} implies that a coincidence class $[\tilde x_1]$ of a pair $(q_i, \bar f_i)$ is essential, if and only if, the correspondent fixed point class $q_i([\tilde x_1])$ is essential.	
	\end{proof}

	\begin{cor}
		Let $X$ be a compact manifold without boundary and $\phi:X\stackrel{2}{\multimap} X$ a non-split $2$-valued map. Then $N(\phi)=N(q,f_1)=N(q,f_2)$.
	\end{cor}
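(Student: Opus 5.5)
This corollary is the case $n=2$ of Theorem \ref{Nielsennumber non-orientable}, so the plan is to specialize the data of Notations \ref{notation non-split} and \ref{notation action Li} to $n=2$ and read off the sum. Since $\phi$ is non-split, $\theta:\pi_1(X)\to\mathcal S_2$ is not trivial. Hence $L'=\operatorname{im}\theta=\mathcal S_2=\{id,(1\,2)\}$ and $q:\tilde X\to X$ is the $2$-sheeted covering whose nontrivial deck transformation $\delta$ satisfies $\Gamma(\delta)=(1\,2)$.

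Next I compute the orbit and stabilizer data. $L'$ acts transitively on $\{1,2\}$, so $\mathbb O_1=\{1,2\}$ and $\mathbb I_0=\{1\}$. The stabilizer $L'_1$ contains only $id$, because $(1\,2)$ moves $1$. Thus $L_1=\{id\}$, $\tilde X/L_1=\tilde X$, $q_1=q$, $p_1=id$ and $\bar f_1=f_1$. Theorem \ref{Nielsennumber non-orientable} then gives directly
\[
N(\phi)=\sum_{i\in\mathbb I_0}N(q_i,\bar f_i)=N(q,f_1).
\]
By symmetry, relabelling the lift as $(f_2,f_1)$ would give $N(q,f_2)$ in the same way. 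I would still verify that equality directly, since the paper fixes $\mathbb I_0$ as the set of minimal orbit elements.

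For the equality $N(q,f_1)=N(q,f_2)$, Proposition \ref{fsigmak=fdeltak} gives $f_2=f_1\circ\delta$, and $q\circ\delta=q$ because $\delta$ is a deck transformation. Therefore $\delta$ is a homeomorphism carrying $Coin(q,f_2)$ onto $Coin(q,f_1)$. The key step is that $\delta$ conjugates the pair $(q,f_2)$ to the pair $(q,f_1)$: homotopies of $f_2$ correspond to homotopies of $f_1$ by precomposition with $\delta$, and $q$ stays fixed. Hence $\delta$ maps Nielsen coincidence classes of $(q,f_2)$ to Nielsen coincidence classes of $(q,f_1)$. It also preserves essentiality, since the coincidence index is invariant under this conjugation: $q$ is orientation-true and $\delta$ preserves local orientations compatible with $q$ because $q\circ\delta=q$.

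Alternatively, Lemma \ref{Coini; Lij<=> Coinj} shows $q(Coin(q,f_1))=q(Coin(q,f_2))=Fix(\phi)$. One can then rerun the argument of Propositions \ref{qCoin is a bijection}--\ref{index fix=coin} with $f_2$ in place of $f_1$, since nothing there used minimality of the orbit representative beyond picking one element per orbit. The only mild obstacle is this index invariance under $\delta$ in the non-orientable case. I would handle it by noting that local orientations on $\tilde X$ are pulled back via $q$, and $q\circ\delta=q$ makes $\delta$ orientation-compatible in that sense.
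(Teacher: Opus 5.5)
Your proposal is correct and follows the paper's proof. Both arguments specialize Theorem~\ref{Nielsennumber non-orientable} to $n=2$, where $L'=\mathcal S_2$ has the single orbit $\{1,2\}$ and $L_1=L_2=\{id\}$, so that $q_1=q$ and $\bar f_1=f_1$. The paper simply asserts $N(\phi)=N(q,f_j)$ for either $j\in\{1,2\}$, implicitly using that any orbit representative works, and your conjugation argument via $f_2=f_1\circ\delta$ and $q\circ\delta=q$ is a valid way of making that step explicit.
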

	\begin{proof}
		Since $\phi$ is non-split, $L \simeq L' \simeq \mathbb{Z}_2$, there is a single orbit $\mathbb{O}_i = \{1,2\}$, and $L_1=L_2 = \{id\}$.  
		Then by Theorem \ref{Nielsennumber non-orientable}, $N(\phi) = N(q,f_j)$, for $j \in \{1,2\}$.
	\end{proof}
	

\bibliographystyle{plain} 

\begin{thebibliography}{10}

	
	\bibitem{brown2005handbook}
	Robert~F Brown, Massimo Furi, Lech G{\'o}rniewicz, and Boju Jiang.
	\newblock {\em Handbook of topological fixed point theory}.
	\newblock Springer, 2005.
	
	\bibitem{brown2018topology}
	Robert~F Brown and Daciberg~Lima Goncalves.
	\newblock On the topology of $n$-valued maps.
	\newblock {\em Advances in Fixed Point Theory}, 8(2):205--220, 2018.
	
	\bibitem{brown2023lift}
	Robert~F. Brown and Daciberg~L. Gon{\c{c}}alves.
	\newblock Lift factors for the Nielsen root theory of $n$-valued maps.
	\newblock {\em Topological Methods in Nonlinear Analysis}, 61(1):269--289,
	2023.
	
	\bibitem{gonccalves1997lefschetz}
	Daciberg~L. Gon{\c{c}}alves and Jerzy Jezierski.
	\newblock Lefschetz coincidence formula on non-orientable manifolds.
	\newblock {\em Fundamenta Mathematicae}, 153(1):1--23, 1997.

	\bibitem{gonccalves2017fixed}
	Daciberg~L. Gon{\c{c}}alves and John Guaschi.
	\newblock Fixed points of $n$-valued maps on surfaces and the Wecken property—a
	configuration space approach.
	\newblock {\em Science China Mathematics}, 60(9):1561--1574, 2017.
	
	\bibitem{gonccalves2018fixed}
	Daciberg~L. Gon{\c{c}}alves and John Guaschi.
	\newblock Fixed points of $n$-valued maps, the fixed point property and the case
	of surfaces—a braid approach.
	\newblock {\em Indagationes Mathematicae}, 29(1):91--124, 2018.
	
	\bibitem{gonccalves2002roots}
	Daciberg~L. Gon{\c{c}}alves, Elena Kudryavtseva and Heiner Zieschang.
	\newblock Roots of mappings on nonorientable surfaces and equations in free groups.
	\newblock {\em manuscripta mathematica}, 107(3): 311--341, 2002.	

	\bibitem{gonccalves2024borsuk}
	Daciberg~L. Gon{\c{c}}alves and Jes{\'u}s Gonz{\'a}lez.
	\newblock Borsuk--Ulam property for graphs II: The $Z_n$-action.
	\newblock {\em Lobachevskii Journal of Mathematics}, 46(3):1057--1075, 2025.
	
	\bibitem{gonccalves2022free}
	Daciberg~L. Gon{\c{c}}alves, John Guaschi and Vinicius~Casteluber Laass.
	\newblock Free cyclic actions on surfaces and the Borsuk-Ulam theorem.
	\newblock {\em Acta Mathematica Sinica, English Series}, 38(10):1803--1822,
	2022.
	
	\bibitem{gonccalves2025splitting}
	Daciberg~L. Gon{\c{c}}alves, John Guaschi and Carolina de Miranda e Pereiro.
	\newblock The splitting of generalisations of the Fadell-Neuwirth short exact sequence.
	\newblock {\em J. Algebra} 694: 629--675, 2026.
	
	\bibitem{hatcher2002algebraic}
	Allen Hatcher.
	\newblock {\em Algebraic Topology}.
	\newblock Cambridge University Press, 2002.
	
	\bibitem{maues2025computation}
	Bartira Mau{\'e}s.
	\newblock Computation of the Nielsen fixed point number for $2$-valued non-split
	maps on the Klein bottle.
	\newblock {\em arXiv preprint arXiv:2504.20171}, 2025.
	
	\bibitem{schirmer1984fix}
	Helga Schirmer.
	\newblock Fix-finite approximation of $n$-valued multifunctions.
	\newblock {\em Fundamenta Mathematicae}, 1(121):73--80, 1984.
	
	\bibitem{schirmer1984index}
	Helga Schirmer.
	\newblock An index and a Nielsen number for $n$-valued multifunctions.
	\newblock {\em Fundamenta Mathematicae}, 124(3):207--219, 1984.
	
	\bibitem{schirmer1985minimum}
	Helga Schirmer.
	\newblock A minimum theorem for $n$-valued multifunctions.
	\newblock {\em Fundamenta Mathematicae}, 126(1):83--92, 1985.
	
	\bibitem{staecker2021partitions}
	P. Christopher Staecker,
	\newblock Partitions of $ n $-valued maps.
	\newblock {\em arXiv preprint arXiv:2101.09326}, 2021. 
	
\end{thebibliography}

\end{document}